\documentclass[11pt,leqno]{article}

\usepackage{amsthm,amsfonts,amssymb,amsmath,oldgerm}
\usepackage{fullpage}
\usepackage{graphicx}
\usepackage{mathrsfs}
\usepackage[dvipsnames]{xcolor}
\usepackage{xcolor}

\numberwithin{equation}{section}

\renewcommand\d{\partial}
\renewcommand\a{\alpha}
\renewcommand\b{\beta}

\def\eps {\varepsilon}

\newcommand{\Z}{\mathbb Z}
\newcommand{\R}{\mathbb R}



\newcommand\LA{\left\langle}
\newcommand\RA{\right\rangle}


\newcommand{\RM}{{\mathbb{R}}}
\newcommand{\CM}{{\mathbb{C}}}
\newcommand{\NM}{{\mathbb{N}}}
\newcommand{\ZM}{{\mathbb{Z}}}
\newtheorem{theorem}{Theorem}[section]
\newtheorem{proposition}[theorem]{Proposition}
\newtheorem{corollary}[theorem]{Corollary}
\newtheorem{lemma}[theorem]{Lemma}
\newtheorem{remark}[theorem]{Remark}
\theoremstyle{definition}
\newtheorem{definition}[theorem]{Definition}

\allowdisplaybreaks[3]

\title{Linear Modulational and Subharmonic Dynamics of Spectrally Stable Lugiato-Lefever Periodic Waves}

\author{Mariana Haragus\thanks{FEMTO-ST institute, Univ. Bourgogne-Franche Comt\'e, 15b avenue des Montboucons, 25030 Besan\c con cedex, France; mharagus@univ-fcomte.fr},\quad Mathew~A.~Johnson\thanks{Department of Mathematics, University of Kansas, 1460 Jayhawk Boulevard, 
Lawrence, KS 66045, USA; matjohn@ku.edu}\quad\&\quad Wesley R. Perkins\thanks{Department of Mathematics, University of Kansas, 1460 Jayhawk Boulevard, 
Lawrence, KS 66045, USA; wesley.perkins@ku.edu} }

\date{\today}


\begin{document}

\maketitle

\begin{abstract}
We study the linear dynamics of spectrally stable $T$-periodic stationary solutions of the Lugiato-Lefever equation (LLE), a damped  nonlinear Schr\"odinger equation with forcing that arises
in nonlinear optics.  
Such $T$-periodic solutions are nonlinearly stable to $NT$-periodic, i.e. subharmonic, perturbations for each $N\in\NM$ with exponential decay rates of perturbations of the form $e^{-\delta_N t}$.  
However, both the exponential rates of decay $\delta_N$ and the allowable size of the initial perturbations
tend to $0$ as $N\to\infty$, so that this result is non-uniform in $N$ and, in fact, empty in the limit $N=\infty$. The primary goal
of this paper is to introduce a methodology, in the context of the LLE, by which a uniform stability result for subharmonic perturbations
may be achieved, at least at the linear level.  The obtained uniform decay rates are shown to agree precisely with the polynomial decay rates of localized, i.e. integrable on the real line,
perturbations of such spectrally stable periodic solutions of the LLE.  This work both unifies and expands on several existing works in the literature concerning the
stability and dynamics of such waves, and sets forth a general methodology for studying such problems in other contexts.
\end{abstract}

\section{Introduction}\label{S:intro}

In this paper, we consider the stability and dynamics of periodic stationary solutions of the Lugiato-Lefever equation (LLE)
\begin{equation}\label{e:LLE}
\psi_t = -i\b \psi_{xx} - (1+i\a)\psi + i|\psi|^2\psi + F,
\end{equation}
where $\psi(x,t)$ is a complex-valued function depending on a temporal variable $t$ and a spatial variable $x$, the parameters $\alpha,\beta$ are real, 
and $F$ is a positive constant.   The model \eqref{e:LLE} was derived from Maxwell's equations in \cite{LL87} as a model to study pattern formation within the optical field 
in a dissipative and nonlinear optical cavity filled with a Kerr medium and subjected to a continuous laser pump.  In this context, 
$\psi(x,t)$ represents the field envelope, $\alpha>0$ represents a detuning parameter, $F>0$ represents a normalized pump strength,
and $|\beta|=1$ is the dispersion parameter.  The case $\beta=1$ is referred to as the ``normal" dispersion case while $\beta=-1$ is referred to as the ``anomalous" dispersion case.  

Since its derivation, the LLE has been intensely studied in the physics literature in the context of nonlinear optics, having more recently become
a model for high-frequency optical combs generated by microresonators in periodic optical wave guides (see, for example,
\cite{CGTM17} and references therein).  
Until recently, however, there have been relatively few mathematically rigorous studies of the LLE.  
Several recent works have established the existence of periodic standing solutions of \eqref{e:LLE}. 
Such solutions $\psi(x,t)=\phi(x)$ correspond to $T$-periodic solutions of the profile equation
\begin{equation}\label{e:profile}
-i\b \phi'' - (1+i\a)\phi + i|\phi|^2\phi + F=0.
\end{equation}
Using tools from bifurcation theory, the existence of periodic standing waves bifurcating both locally and globally from constant solutions has been shown in \cite{MOT1,MR17,DH18_1,DH18_2}. Another type of periodic solutions has been recently constructed in the case of anomalous dispersion $\beta=-1$ in \cite{HSS19}.  
These solutions correspond to bifurcations from the standard arbitrary amplitude dnoidal solutions of the cubic NLS equation, for small $|(F,\alpha)|$. We also refer to \cite{Go17} for a local bifurcation analysis of bounded solutions of \eqref{e:profile} including, besides periodic, also localized and quasi-periodic solutions.

Our work focuses on the dynamical stability and long-time asymptotic dynamics of spatially periodic standing solutions of \eqref{e:LLE} when subject to varying classes of perturbations.  
Note that if $\phi$ is a $T$-periodic standing solution of \eqref{e:LLE} and we decompose $\phi=\phi_r+i\phi_i$ into its real and imaginary parts, then a function of the form $\psi(x,t)=\phi(x)+v(x,t)$, with $v=v_r+iv_i$, is a solution of \eqref{e:LLE} provided it 
satisfies the real system
\begin{equation}\label{e:lin}
\partial_t\left(\begin{array}{c}v_r\\v_i\end{array}\right)=\mathcal{A}[\phi]\left(\begin{array}{c}v_r\\v_i\end{array}\right)+\mathcal{N}(v),
\end{equation}
where here $\mathcal{N}(v)$ is at least quadratic in $v$ and $\mathcal A[\phi]$ is the (real) linear differential operator
\begin{equation}\label{e:Aphi}
\mathcal A[\phi]=- I+\mathcal{J}\mathcal{L}[\phi],
\end{equation}
with
\[
\mathcal{J}=\left(\begin{array}{cc}0&-1\\1&0\end{array}\right),\quad 
\mathcal{L}[\phi] = \left(\begin{array}{cc} -\b \d_x^2 - \a  + 3\phi_{r}^2 + \phi_{i}^2 & 2\phi_{r}\phi_{i} \\
  2\phi_{r}\phi_{i} & -\b \d_x^2 - \a  + \phi_{r}^2 + 3\phi_{i}^2\end{array}\right).
  \]
The choice of a function space  for the evolution problem \eqref{e:lin} is determined by the class of perturbations of the $T$-periodic standing wave. Choosing a Hilbertian framework, we take $L^2_{\rm per}(0,T)$ for co-periodic perturbations, $L^2_{\rm per}(0,NT)$ with $N\in\NM$ for so-called subharmonic perturbations, and $L^2(\RM)$ for localized perturbations.\footnote{Since operators here are defined on vector valued functions, throughout this work we will abuse notation slightly and write $L^2(\RM)$ rather than $L^2(\RM)\times L^2(\RM)$, and similarly for all other Lebesgue and Sobolev spaces. Furthermore, when the meaning is clear from context, we will write functions in $(f_1,f_2)\in L^2\times L^2$ as simply $f\in L^2$.}
Recall that the spectral stability of a periodic wave $\phi$ to a given class of perturbations is determined by the spectrum of the linear operator $\mathcal{A}[\phi]$ when acting
on the associated function space.  Similarly, 
the linear stability of $\phi$ is given by the properties of the associated evolution semigroup $(e^{\mathcal{A}[\phi]t})_{t\geq 0}$, and the nonlinear (orbital) 
stability by the behavior of the solutions of the nonlinear equation \eqref{e:lin}.

The spectral stability of periodic waves bifurcating locally from constant solutions has been studied in \cite{DH18_2,DH18_1}. It turns out that most of these waves are unstable for subharmonic perturbations, with $N$ larger than a certain value $N_c\geq1$ depending on the parameters $\alpha$ and $F$, and that there is precisely one family of such waves which are 
spectrally stable for all subharmonic perturbations, and also for localized perturbations. These stable periodic waves bifurcate supercritically in the case of anomalous dispersion, $\beta=-1$, for any fixed parameter $\alpha<41/30$ and bifurcation parameter $F^2=F_1^2+\mu$, for sufficiently small $\mu>0$, where $F_1^2=(1-\alpha)^2+1$. More precisely, there exists $\mu_0>0$ such that for any $\mu\in(0,\mu_0)$, the LLE has an even periodic solution with Taylor expansion
\begin{equation}\label{e:periodic}
\phi_\mu(x) = \phi^* +  \frac{3(\alpha+i(2-\alpha))}{F_1(41-30\alpha)^{1/2}}\,\cos(\sqrt{2-\alpha}\,x)\,\mu^{1/2}+ O(\mu),
\end{equation}
where $\phi^*$ is the unique constant solution satisfying the algebraic equation
\[
(1+i\alpha)\phi-i\phi|\phi|^2=F_1.
\]
The solution $\phi_\mu$ is $T$-periodic with period $T=2\pi/\sqrt{2-\alpha}$, and it is of class $C^\infty$. 
We point out that this parameter regime has been investigated in the original work of Lugiato and Lefever \cite{LL87} who determined the value $\alpha_c=41/30$ as an instability threshold. 

For co-periodic perturbations which are $H^2$, i.e., belong to the domain of the linear operator $\mathcal A[\phi]$, the nonlinear asymptotic stability of the periodic waves \eqref{e:periodic} is a direct consequence of the bifurcation analysis used for their construction \cite{MOT1, DH18_1}. 
Using Strichartz-type estimates, this result has been extended to more general 
$L^2_{\rm per}(0,T)$-perturbations in \cite{MOT2}. As pointed out in \cite[Section 6(a)]{DH18_1}, the bifurcation analysis used to construct these periodic waves can be extended to spaces of $NT$-periodic functions, for any arbitrary but fixed $N$, which then also gives a nonlinear stability result for these waves for $H^2$-subharmonic perturbations. 
However, this stability result is not uniform in $N$: for a given periodic wave $\phi_\mu$ as in \eqref{e:periodic}, nonlinear stability is obtained for a finite number of integers $N$, 
only.\footnote{This is a result of their construction, which is based on center manifold techniques.}

For  localized perturbations, as well as for general bounded perturbations, the spectral stability of the periodic waves \eqref{e:periodic} has been proved in \cite[Theorem 4.3]{DH18_1}. Based on Floquet-Bloch theory and spectral perturbation theory, this result  shows that the periodic waves given by  \eqref{e:periodic} are \emph{diffusively spectrally stable} in the sense of the following definition.

\begin{definition}\label{Def:spec_stab}
A $T$-periodic stationary solution $\phi\in H^1_{\rm loc}(\RM)$ of \eqref{e:LLE} is said to be \emph{diffusively spectrally stable} provided the following conditions hold:
\begin{enumerate}
\item the spectrum of the linear operator $\mathcal{A}[\phi]$ given by \eqref{e:Aphi} and acting in $L^2(\R)$ satisfies \[\sigma_{L^2(\R)}(\mathcal{A}[\phi])\subset\{\lambda\in\CM:\Re(\lambda)<0\}\cup\{0\};\]
\item there exists $\theta>0$ such that for any $\xi\in[-\pi/T,\pi/T)$ the real part of the spectrum of the Bloch operator $\mathcal{A}_\xi[\phi]:=e^{-i\xi x}\mathcal{A}[\phi]e^{i\xi x}$ acting on 
$L^2_{\rm per}(0,T)$ satisfies
  \[\Re\left(\sigma_{L^2_{\rm per}(0,T)}(\mathcal{A}_\xi[\phi])\right)\leq-\theta \xi^2;\]
\item $\lambda=0$ is a simple eigenvalue of $\mathcal{A}_0[\phi]$ with associated eigenvector the derivative $\phi'$ of the periodic wave.\footnote{Recall, by our slight abuse of notation above, here
we are writing $\phi'$ instead of the technically correct $(\phi_r',\phi_i')$.}
\end{enumerate}
\end{definition}

This stability notion was first introduced in \cite{JNRZ_Invent} for more general classes of viscous conservation and balance laws,
and is stated here in the context of \eqref{e:LLE}. The properties (i)-(iii) in this definition are the main assumptions required for the present analysis.
Recall that Floquet-Bloch theory shows that the spectrum of $\mathcal{A}[\phi]$ acting in $L^2(\R)$
is equal to the union of the spectra of the Bloch operators  $\mathcal{A}_\xi[\phi]$ acting in $L_{\rm per}^2(0,T)$ for  $\xi\in[-\pi/T,\pi/T)$. For subharmonic perturbations, the operator $\mathcal{A}[\phi]$ acts in $L_{\rm per}^2(0,NT)$, and its spectrum is the union of the spectra of the Bloch operators  $\mathcal{A}_\xi[\phi]$ acting in $L_{\rm per}^2(0,T)$ for  $\xi$ in a discrete subset of the interval $[-\pi/T,\pi/T)$ such that $e^{i\xi NT}=1$. In particular, this implies that diffusively spectrally stable periodic waves are spectrally stable for all subharmonic perturbations. Notice that for such perturbations, the spectrum of  $\mathcal{A}[\phi]$ is purely point spectrum consisting of isolated eigenvalues with finite algebraic multiplicities,  $\lambda=0$ is a simple eigenvalue, with associated eigenvector the derivative $\phi'$ of the periodic wave, and the remaining eigenvalues have negative real parts, satisfying the spectral gap condition
\begin{equation}\label{e:deltaN}
\Re\left(\sigma_{L^2_{\rm per}(0,NT)}(\mathcal{A}[\phi])\setminus\{0\}\right)\leq -\delta_N,
\end{equation}
for some $\delta_N>0$.
As the eigenvalues of the Bloch operators $\mathcal{A}_\xi[\phi]$ depend continuously on $\xi$, it is not difficult to see that the  spectral gap $\delta_N$ above tends to $0$, as $N\to\infty$. 
For more information, see Sections \ref{S:bloch_loc}-\ref{S:bloch_per} below.

For the periodic waves bifurcating from the dnoidal solutions of the NLS equation, the authors proved in \cite{HSS19} that some of these are spectrally stable to co-periodic perturbations. 
That this spectral stability corresponds to nonlinear stability was recently established in \cite[Theorem 1]{SS19}. The power of this result is that it reduces the problem of nonlinear stability for co-periodic perturbations to a spectral problem which, in turn, may be amenable 
to well-conditioned analytical and numerical methods. Furthermore, the proof of this result can be easily extended to subharmonic perturbations, leading to the following nonlinear stability result for diffusively spectrally stable periodic standing solutions of \eqref{e:LLE}. 

\begin{theorem}\label{T:SS}
Let $\phi\in H^1_{\rm loc}(\RM)$ be a $T$-periodic standing solution of \eqref{e:LLE} and fix $N\in\NM$.  Assume that $\phi$ is diffusively spectrally stable in the 
sense of Definition~\ref{Def:spec_stab} and, for each $N\in\NM$, take $\delta_N>0$ such that \eqref{e:deltaN} holds.
Then for each $N\in\NM$, $\phi$ is asymptotically stable to subharmonic $NT$-periodic perturbations.  More precisely, for every 
$\delta\in(0,\delta_N)$ there exists an $\eps=\eps_\delta>0$
and a constant $C=C_\delta>0$ such that whenever $u_0\in H^1_{\rm per}(0,NT)$ and $\|u_0-\phi\|_{H^1(0,NT)}<\eps$, then the solution 
$u$ of \eqref{e:LLE} with initial data $u(0)=u_0$ exists globally in time and satisfies
\[
\|u(\cdot,t)-\phi(\cdot-\gamma_\infty)\|_{H^1(0,NT)}\leq C e^{-\delta t}\|u_0-\phi\|_{H^1(0,NT)},
\]
for all $t>0$, where $\gamma_\infty=\gamma_\infty(N)$ is some real constant.
\end{theorem}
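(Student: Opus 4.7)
The approach is a standard modulational argument, and the central task is to combine a linear semigroup estimate with a nonlinear iteration. By the diffusive spectral stability assumptions and Floquet-Bloch theory, the spectrum of $\mathcal{A}[\phi]$ acting on $L^2_{\rm per}(0,NT)$ is the union of the Bloch spectra at the $N$ quasi-momenta $\xi_k = 2\pi k/(NT)$, $k=0,\ldots,N-1$; in particular, it is discrete, with $\lambda=0$ a simple eigenvalue with eigenfunction $\phi'$, and the remainder satisfies $\Re\lambda \leq -\delta_N$. Let $P$ denote the associated Riesz projection onto $\Span\{\phi'\}$. The key linear ingredient is then the semigroup estimate
\begin{equation*}
\|e^{\mathcal{A}[\phi]t}(I-P)\|_{H^1_{\rm per}(0,NT)\to H^1_{\rm per}(0,NT)} \leq C_\delta e^{-\delta t}, \quad t \geq 0,
\end{equation*}
valid for each $\delta\in(0,\delta_N)$.

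To cope with the neutral direction $\phi'$, introduce the modulational ansatz $u(x,t) = \phi(x-\gamma(t)) + v(x,t)$ and, via the implicit function theorem, select $\gamma(t)$ so that the transported projection $P_\gamma v(\cdot,t)$ vanishes throughout the existence interval, where $P_\gamma = T_\gamma P T_{-\gamma}$ is the spectral projection associated with $\mathcal A[\phi(\cdot-\gamma)]$; this is possible so long as $\|v(\cdot,t)\|_{H^1}$ stays small. Substituting the ansatz into \eqref{e:LLE} and using translation invariance gives
\begin{equation*}
\partial_t v = \mathcal{A}[\phi(\cdot-\gamma)]v + \dot\gamma\,\phi'(\cdot-\gamma) + \mathcal{N}(v),
\end{equation*}
with $\mathcal{N}(v)$ collecting the at-least-quadratic nonlinear contributions, which are controlled by $\|\mathcal N(v)\|_{H^1}\leq C\|v\|_{H^1}^2$ for small $v$ via the Sobolev embedding $H^1\hookrightarrow L^\infty$ on a bounded interval. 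Differentiating the constraint $P_\gamma v = 0$ in $t$ and using that $P_\gamma \mathcal A[\phi(\cdot-\gamma)] = 0$ yields a closed scalar ODE for $\dot\gamma$ satisfying $|\dot\gamma(t)| \leq C \|v(t)\|_{H^1}^2$. Applying $I-P_\gamma$ to the perturbation equation removes the $\dot\gamma$ term, and Duhamel's formula together with the linear semigroup estimate gives
\begin{equation*}
\|v(t)\|_{H^1} \leq C_\delta e^{-\delta t}\|v(0)\|_{H^1} + C_\delta\int_0^t e^{-\delta(t-s)}\|v(s)\|_{H^1}^2\,ds.
\end{equation*}

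A standard continuation-bootstrap argument then shows that if $\|u_0-\phi\|_{H^1(0,NT)}$ is sufficiently small, the solution exists globally with $\|v(t)\|_{H^1} \leq 2C_\delta\|v(0)\|_{H^1}e^{-\delta t}$ for all $t\geq 0$. Feeding this back into the ODE for $\gamma$ gives $|\dot\gamma(t)| \leq C\|v(0)\|_{H^1}^2 e^{-2\delta t}$, whence $\gamma(t)$ converges to some $\gamma_\infty\in\R$ with $|\gamma(t)-\gamma_\infty|\leq Ce^{-2\delta t}$, and the triangle inequality then yields the stated bound. The principal obstacle is the linear semigroup estimate: because the dispersive term $-i\beta\partial_x^2$ prevents $\mathcal{A}[\phi]$ from being sectorial, exponential decay on $\Range(I-P)$ at the rate dictated by the spectral gap is not automatic from general semigroup theory and must be obtained via a resolvent estimate of Gearhart-Pr\"uss type. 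This is precisely the argument established in the co-periodic ($N=1$) setting in \cite{SS19}, and it extends to $L^2_{\rm per}(0,NT)$ essentially verbatim, since only the $N-1$ additional Bloch quasi-momenta, all of which inherit strict spectral gaps, need to be incorporated into the resolvent analysis.
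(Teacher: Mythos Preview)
Your proposal is correct and follows essentially the same approach as the paper, which does not give a detailed proof but instead identifies the key linear estimate \eqref{e:SS_lin_est} obtained via the Gearhart--Pr\"uss theorem (extending the $N=1$ argument of \cite{SS19}) and then defers to standard nonlinear iteration. Your sketch fills in precisely these standard details---the modulational decomposition, the implicit-function-theorem choice of $\gamma(t)$, and the Duhamel/bootstrap argument---in the expected way.
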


The key to the proof of Theorem \ref{T:SS}, which is presented in \cite{SS19} for $N=1$, is a careful estimate of the resolvent operator, which allows one to apply the Gearhart-Pr\"uss theorem and obtain an exponential decay rate for the semigroup generated by the linear operator $\mathcal{A}[\phi]$.  Specifically, one shows that for each $\delta\in(0,\delta_N)$ there exists a constant $C=C_\delta>0$
such that
\begin{equation}\label{e:SS_lin_est}
\left\|e^{\mathcal{A}[\phi]t}\left(1-\mathcal{P}_{0,N}\right)f\right\|_{H^1(0,NT)}\leq Ce^{-\delta t}\|f\|_{H^1(0,NT)}
\end{equation}
for all $f\in H^1_{\rm per}(0,NT)$, where here $\mathcal{P}_{0,N}$ is the rank-one spectral projection onto the $NT$-periodic kernel of $\mathcal{A}[\phi]$:
see Remark~\ref{r:subharmonic}(ii) for more details. Equipped with this linear exponential decay result, the remainder of the proof of Theorem \ref{T:SS} follows from  standard 
nonlinear iteration arguments: for details, see \cite{SS19}. We point out that using Strichartz-type estimates, the nonlinear stability result in Theorem~\ref{T:SS} can be extended to $L^2$-subharmonic perturbations \cite{BD20}.

An important observation concerning Theorem \ref{T:SS} is that it lacks uniformity in $N$ in two (related) aspects.  Indeed, note that both the exponential  rate
of decay, specified by $\delta$, as well as the allowable size of initial perturbations, specified by $\eps=\eps_\delta$, are controlled completely in terms
of the size of the spectral gap $\delta_N>0$ of the linearized operator $\mathcal{A}[\phi]$ in \eqref{e:deltaN}.  Since $\delta_N\to 0$ as $N\to\infty$, it follows
that both $\delta$ and $\eps$ chosen in Theorem \ref{T:SS} necessarily tend to zero as $N\to\infty$ and that, consequently, the nonlinear stability
result Theorem \ref{T:SS} is empty in the limit $N=\infty$.  Note that at the linear level, the lack of uniformity in the allowable
size of the initial perturbations is due to the fact that $C=C_\delta\to\infty$ as $\delta\to 0$ in \eqref{e:SS_lin_est}.

In light of the above observations, it is therefore natural to ask if there is a way to obtain a stability result to subharmonic, i.e. $NT$-periodic, perturbations which is \emph{uniform in $N$}.  
In such a result, one should require that both the rate of decay and allowable size of the initial perturbations are uniform in $N$, thus depending only on the background
wave $\phi$.    At the linear level, this would correspond to proving the existence of a non-negative function $g:(0,\infty)\to(0,\infty)$ with $g(t)\to 0$
as $t\to\infty$ such that an inequality of the form
\[
\left\|e^{\mathcal{A}[\phi]t}\left(1-\mathcal{P}_{0,N}\right)\right\|_{\mathcal{L}(L^2_{\rm per}(0,NT))}
\leq g(t)
\]
holds for all $N\in\NM$ and $t>0$.  Our main result, stated below, shows that establishing such a uniform linear estimate is possible for the LLE \eqref{e:LLE}, with polynomial
rates of decay instead of exponential.  Further, it gives additional insight into the long-time dynamics of subharmonic perturbations.

\begin{theorem}[Uniform Subharmonic Linear Asymptotic Stability]\label{T:sub_main}
Suppose $\phi\in H^1_{\rm loc}(\RM)$ is a  $T$-periodic standing wave solution of \eqref{e:LLE} that is diffusively spectrally stable, in the sense
of Definition \ref{Def:spec_stab}. For each $N\in\NM$ let 
\[
\mathcal{P}_{0,N}:L^2_{\rm per}(0,NT)\to{\rm span}\{\phi'\}
\]
be the spectral projection of $L^2_{\rm per}(0,NT)$ onto the $NT$-periodic kernel of $\mathcal{A}[\phi]$.  
Then there exists a constant $C>0$ such that for every $N\in\NM$ and
$f\in L^1_{\rm per}(0,NT)\cap L^2_{\rm per}(0,NT)$ we have
\begin{equation}\label{result1}
\left\|e^{\mathcal{A}[\phi]t}(1-\mathcal{P}_{0,N})f\right\|_{L^2_{\rm per}(0,NT)}\leq C (1+t)^{-1/4}\|f\|_{L^1_{\rm per}(0,NT)\cap L^2_{\rm per}(0,NT)}
\end{equation}
valid for all $t>0$.  Furthermore,  there exists a constant $C>0$ such that
for all $N\in\NM$ and  $f\in L^1_{\rm per}(0,NT)\cap L^2_{\rm per}(0,NT)$ there exists a $NT$-periodic
function $\gamma_N(\cdot,t)=\gamma_N(\cdot,t;f)$  
\begin{equation}\label{result2}
\left\|\gamma_N(\cdot,t)-\frac{\left<\phi',\mathcal{P}_{0,N}f\right>_{L^2_{\rm per}(0,T)}}{\|\phi'\|^2_{L^2_{\rm per}(0,T)}}\right\|_{L^2_{\rm per}(0,NT)}
	\leq C(1+t)^{-1/4}\|f\|_{L^1_{\rm per}(0,NT)\cap L^2_{\rm per}(0,NT)}
\end{equation}
and
\begin{equation}\label{result3}
\left\|e^{\mathcal{A}[\phi]t}f-\phi'\gamma_N(\cdot,t)\right\|_{L^2_{\rm per}(0,NT)}
\leq C (1+t)^{-3/4}\|f\|_{L^1_{\rm per}(0,NT)\cap L^2_{\rm per}(0,NT)},
\end{equation}
for all $t>0$.
\end{theorem}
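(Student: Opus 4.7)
The plan is to exploit a Bloch-Floquet decomposition adapted to $NT$-periodic functions, and then bound the resulting discrete sums uniformly in $N$ by comparing them with the corresponding integrals that govern the $L^2(\RM)$ case. Specifically, any $f\in L^2_{\rm per}(0,NT)$ can be written as $f(x)=\frac{1}{N}\sum_{j}e^{i\xi_j x}\check f(\xi_j,x)$, with $\xi_j$ ranging over a discrete subset of $[-\pi/T,\pi/T)$ of spacing $\Delta\xi=2\pi/(NT)$ containing the origin, and $\check f(\xi_j,\cdot)\in L^2_{\rm per}(0,T)$. Because $\mathcal{A}[\phi]$ commutes with translation by $T$, the semigroup respects this decomposition, giving
\[
e^{\mathcal{A}[\phi]t}f(x)=\frac{1}{N}\sum_{j}e^{i\xi_j x}e^{\mathcal{A}_{\xi_j}[\phi]t}\check f(\xi_j,x).
\]

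Using conditions (ii)--(iii) of Definition~\ref{Def:spec_stab} together with standard analytic perturbation theory, I would construct on a small neighborhood $U$ of $\xi=0$ a smooth family of rank-one spectral projections $\Pi_0(\xi):L^2_{\rm per}(0,T)\to\Span\{q(\xi,\cdot)\}$, with eigenvalue $\lambda(\xi)$ satisfying $\lambda(0)=0$, $q(0)=\phi'$, and $\Re\lambda(\xi)\leq -\theta\xi^2/2$, and with dual eigenvector $\tilde q(\xi)$. Splitting $e^{\mathcal{A}[\phi]t}=S_c(t)+S_s(t)$ along $\Pi_0(\xi)$ (extended by zero outside $U$) and its complement, condition (ii) combined with compactness of $[-\pi/T,\pi/T]$ produces a uniform spectral gap away from the critical mode, so that $S_s(t)$ decays exponentially in $L^2\to L^2$ norm, uniformly in $N$.

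The heart of the argument is estimating the critical piece
\[
S_c(t)f(x)=\frac{1}{N}\sum_{\xi_j\in U}e^{i\xi_j x}e^{\lambda(\xi_j)t}q(\xi_j,x)\langle\tilde q(\xi_j),\check f(\xi_j)\rangle_{L^2_{\rm per}(0,T)}
\]
uniformly in $N$. After projecting out $\mathcal{P}_{0,N}$, which removes exactly the $\xi_j=0$ contribution, Parseval on the discrete Bloch transform together with a uniform Hausdorff-Young-type bound $|\langle\tilde q(\xi_j),\check f(\xi_j)\rangle|\leq C\|f\|_{L^1_{\rm per}(0,NT)}$ yield
\[
\|S_c(t)(1-\mathcal{P}_{0,N})f\|^2_{L^2_{\rm per}(0,NT)}\leq \frac{C}{N}\|f\|^2_{L^1_{\rm per}(0,NT)}\sum_{\xi_j\in U,\,\xi_j\neq 0}e^{-\theta\xi_j^2 t/2}.
\]
Since $\xi\mapsto e^{-\theta\xi^2 t/2}$ is unimodal with maximum at $0$, a Riemann-sum comparison with spacing $\Delta\xi=2\pi/(NT)$ gives $\frac{1}{N}\sum_{\xi_j\neq 0}e^{-\theta\xi_j^2 t/2}=\frac{T}{2\pi}\sum_{\xi_j\neq 0}e^{-\theta\xi_j^2 t/2}\Delta\xi\leq \frac{T}{2\pi}\int_{\RM}e^{-\theta\xi^2 t/2}\,d\xi\leq Ct^{-1/2}$. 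Interpolating with the trivial $L^2\to L^2$ bound for small $t$ delivers \eqref{result1}. For \eqref{result2}--\eqref{result3}, the natural definition is to take $\gamma_N(x,t)$ as the scalar coefficient of $\phi'(x)$ in the leading-order expansion of $S_c(t)f$ about $q(\xi,x)=\phi'(x)+O(\xi)$; the residual $S_c(t)f-\phi'\gamma_N$ then carries an extra factor of $\xi$ in each Bloch mode, making the relevant sum $\frac{1}{N}\sum\xi_j^2 e^{-\theta\xi_j^2 t/2}\lesssim t^{-3/2}$ and yielding \eqref{result3}, while \eqref{result2} follows by isolating the $\xi_j=0$ contribution to $\gamma_N$ (which is constant in $x$ and matches the subtracted quantity by construction of $\mathcal{P}_{0,N}$) and bounding the remaining modes by the same Riemann-sum argument.

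The principal technical obstacle I anticipate is establishing the discrete Bloch-Floquet framework on $L^2_{\rm per}(0,NT)$ with Parseval identities and $L^1$-type bounds whose constants are \emph{independent of $N$} and properly matched to the normalization of the continuous Bloch transform on $L^2(\RM)$. Once this is in place, the Riemann-sum comparisons effectively reduce the subharmonic problem to the integral bounds that yield polynomial decay for localized perturbations, and the uniform-in-$N$ rates then emerge naturally. A secondary subtlety is the careful matching of the normalizations of $\tilde q(0)$, $q(0)=\phi'$, and the spectral projection $\mathcal{P}_{0,N}$, so that the isolated $\xi_j=0$ contribution in \eqref{result2} equals exactly the claimed constant rather than carrying subdominant $x$-dependence from the eigenvector expansion.
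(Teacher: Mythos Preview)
Your overall strategy matches the paper's approach closely: discrete Bloch decomposition on $L^2_{\rm per}(0,NT)$, splitting the semigroup into a critical rank-one piece and a complementary piece, expanding $q(\xi)=\phi'+O(\xi)$ to extract $\gamma_N$, and controlling the discrete sums $\frac{1}{N}\sum e^{-c\xi_j^2 t}$ and $\frac{1}{N}\sum \xi_j^2 e^{-c\xi_j^2 t}$ by Riemann-sum comparison with the corresponding integrals. This is exactly what the paper does.

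There is, however, one genuine gap. You assert that ``condition (ii) combined with compactness of $[-\pi/T,\pi/T]$ produces a uniform spectral gap away from the critical mode, so that $S_s(t)$ decays exponentially in $L^2\to L^2$ norm, uniformly in $N$.'' But the Bloch operators $\mathcal{A}_\xi[\phi]=-I+\mathcal{J}\mathcal{L}_\xi[\phi]$ are neither self-adjoint nor sectorial, so a spectral gap by itself does \emph{not} imply exponential decay of the semigroup, let alone with a rate and prefactor uniform in $\xi$. The paper supplies the missing ingredient: a resolvent bound $\|(i\mu-\mathcal{A}_\xi[\phi])^{-1}\|_{\mathcal{L}(L^2_{\rm per}(0,T))}\leq C_0$ for all $|\mu|>\mu_0$, uniform in $\xi$, which together with the spectral information allows one to invoke the Gearhart--Pr\"uss theorem and obtain the uniform exponential bounds $\|e^{\mathcal{A}_\xi[\phi]t}(I-\Pi(\xi))\|\leq C_1 e^{-\eta_1 t}$ for $|\xi|<\xi_1$ and $\|e^{\mathcal{A}_\xi[\phi]t}\|\leq C_0 e^{-\eta_0 t}$ for $|\xi|>\xi_0$. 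This resolvent estimate is specific to the structure of the LLE linearization and is not a consequence of diffusive spectral stability alone; without it your control of $S_s$ is incomplete. A secondary, minor point: the function $\xi\mapsto\xi^2 e^{-c\xi^2 t}$ is not unimodal with peak at the origin, so the Riemann-sum comparison yielding the $(1+t)^{-3/2}$ bound requires a slightly more careful monotonicity argument than the one you sketch for the first sum.
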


\begin{remark}
More than above, we show in Section \ref{S:sharp} below that the polynomial rates in Theorem \ref{T:sub_main} in fact provide \emph{sharp} uniform rates of decay
for subharmonic perturbations.   Moreover, observe that the uniform polynomial rates of decay require control of the initial data in $L^1$.  As we will see in our analysis,
this is due to the fact that linear diffusion equation does not exhibit decay from $L^2(\RM)$ to $L^2(\RM)$, but does from $L^1(\RM)$ to $L^2(\RM)$:  see Remark \ref{R:diffusive_decay} below.
\end{remark}

To interpret the above result, 
suppose that $\psi(x,t)$ is a solution of \eqref{e:LLE} with initial data $\psi(x,0)=\phi(x)+\eps f(x)$ with $|\eps|\ll 1$ and $f\in L^1_{\rm per}(0,NT)\cap L^2_{\rm per}(0,NT)$.
By \eqref{result1} it follows that, at the linear level, for $\eps\neq 0$ sufficiently small, the solution $\psi$ essentially behaves for large time like
\[
\psi(x,t)\approx\phi(x)+\eps\mathcal{P}_{0,N}f(x)=\phi(x)+\eps\frac{\left<\phi',\mathcal{P}_{0,N} f\right>_{L^2_{\rm per}(0,T)}}{\|\phi'\|_{L^2_{\rm per}(0,T)}^2}\phi'
	\approx\phi\left(x+\eps\frac{\left<\phi',\mathcal{P}_{0,N} f\right>_{L^2_{\rm per}(0,T)}}{\|\phi'\|_{L^2_{\rm per}(0,T)}^2}\right),
\]
corresponding to standard asymptotic (orbital) stability of $\phi$ with asymptotic phase.  This recovers (again, at the linear level) the asymptotic
stability result in Theorem \ref{T:SS}, but now with asymptotic rates of decay which are uniform in $N$.  Further than this,
Theorem \ref{T:sub_main} implies that there exists a function $\gamma_N(x,t)$ which is $NT$-periodic in $x$ such that
\[
\psi(x,t)\approx\phi(x)+\eps\gamma_N(x,t)\phi'(x)\approx\phi(x+\eps\gamma_N(x,t)),~~t\gg 1,
\]
with, by \eqref{result3}, a faster rate of convergence (in time), giving a refined insight into the long-time local dynamics near $\phi$ (described by a space-time dependent translational modulation) beyond
the more standard asymptotic stability as in Theorem \ref{T:SS}.  As a consistency check, note that \eqref{result2} implies that as $t\to\infty$ the function $\gamma_N(\cdot,t)$ tends to the asymptotic
phase predicted by \eqref{result1}.  As we will see, the incorporation of such a space-time dependent modulation function is key to our analysis
and is precisely what allows us to obtain such uniform decay rates.   

The key observation is that the bounds on the evolution semigroups and the linear rates of decay obtained in Theorem \ref{T:sub_main} on subharmonic perturbations are uniform in $N$.  It turns out that these uniform 
decay rates are precisely the linear decay rates one obtains by considering the semigroup $e^{\mathcal{A}[\phi]t}$ as acting on $L^2(\RM)$, i.e. they
agree exactly with the linear rates of decay for localized perturbations.  That the decay rate to localized perturbations should uniformly control
all subharmonic perturbations may be formally motivated by observing that, up to appropriate translations, a sequence of $NT$-periodic functions may converge 
as $N\to\infty$ to a function in $L^2(\RM)$ locally in space.

Based on the above comments, it shouldn't be surprising that the general methodology used for the proof of Theorem \ref{T:sub_main} is modeled off of the associated linear analysis
to localized perturbations.  The localized analysis, in turn, is largely based off the work \cite{JNRZ_Invent} and, for completeness and to motivate
the approach towards the proof of Theorem \ref{T:sub_main}, we review the localized analysis in Section \ref{S:stab_loc} below.  
In particular, we obtain the following result.

\begin{theorem}[Localized Linear Asymptotic Stability]\label{t:stab}
Suppose $\phi\in H^1_{\rm loc}(\RM)$ is a  $T$-periodic standing wave solution of \eqref{e:LLE} that is diffusively spectrally stable, in the sense
of Definition \ref{Def:spec_stab}.
Then there exists a constant $C>0$ such that for any $f\in L^1(\RM)\cap L^2(\RM)$  we have 
\[
\left\|e^{A[\phi]t}f\right\|_{L^2(\RM)}\leq C(1+t)^{-1/4}\|f\|_{L^1(\RM)\cap L^2(\RM)},
\]
for all $t>0$. Furthermore,  there exists a constant $C>0$ such that for each $f\in L^1(\RM)\cap L^2(\RM)$ there exists a function $\gamma(\cdot,t)=\gamma(\cdot,t;f)$ such that
\[
\|\gamma(\cdot,t)\|_{L^2(\RM)}\leq C(1+t)^{-1/4}\|f\|_{L^1(\RM)\cap L^2(\RM)}
\]
and
\[
\left\|e^{A[\phi]t}f-\phi'\gamma(\cdot,t)\right\|_{L^2(\RM)}\leq C(1+t)^{-3/4}\|f\|_{L^1(\RM)\cap L^2(\RM)},
\]
for all $t>0$.
\end{theorem}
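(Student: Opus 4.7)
The plan is to proceed via the Bloch-Floquet decomposition, following the general strategy of \cite{JNRZ_Invent}, adapted to the LLE. Recall that for $f\in L^2(\RM)$ one may write
\[
f(x)=\int_{-\pi/T}^{\pi/T} e^{i\xi x}\check f(\xi,x)\,d\xi,
\]
where $\check f(\xi,\cdot)\in L^2_{\rm per}(0,T)$ is the Bloch transform, with the Parseval-type identity $\|f\|_{L^2(\RM)}^2\sim \int_{-\pi/T}^{\pi/T}\|\check f(\xi,\cdot)\|_{L^2_{\rm per}(0,T)}^2\,d\xi$ and the Hausdorff-Young type bound $\sup_\xi\|\check f(\xi,\cdot)\|_{L^2_{\rm per}(0,T)}\lesssim \|f\|_{L^1(\RM)}$. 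Since $\mathcal A[\phi]$ commutes with translations by $T$, the semigroup diagonalizes as
\[
\bigl(e^{\mathcal A[\phi]t}f\bigr)(x)=\int_{-\pi/T}^{\pi/T}e^{i\xi x}\bigl(e^{\mathcal A_\xi[\phi]t}\check f(\xi,\cdot)\bigr)(x)\,d\xi.
\]
The task then reduces to bounding $e^{\mathcal A_\xi[\phi]t}$ on $L^2_{\rm per}(0,T)$ uniformly in $\xi$ with appropriate dependence on $|\xi|$ for small $\xi$.

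First I would fix a small $\xi_0>0$ and split the integral into a \emph{high-frequency} piece $|\xi|\geq \xi_0$ and a \emph{low-frequency} piece $|\xi|<\xi_0$. On the high-frequency set, the spectral bound (ii) in Definition~\ref{Def:spec_stab} combined with a resolvent/Gearhart-Pr\"uss or semigroup-compactness argument yields a uniform exponential bound $\|e^{\mathcal A_\xi[\phi]t}\|\leq Ce^{-\eta t}$ for some $\eta>0$, which is more than enough for the claimed decay rates. For the low-frequency regime, analytic perturbation theory applied to the family $\xi\mapsto\mathcal A_\xi[\phi]$ (which is analytic since $\mathcal A_\xi[\phi]=e^{-i\xi x}\mathcal A[\phi]e^{i\xi x}$ is a polynomial in $\xi$ with operator coefficients) produces, for $\xi_0$ small, a simple analytic eigenvalue $\lambda(\xi)$ with $\lambda(0)=0$ and right/left eigenfunctions $q(\xi,\cdot)$, $\tilde q(\xi,\cdot)$, normalized so that $\langle \tilde q(\xi),q(\xi)\rangle_{L^2_{\rm per}(0,T)}=1$ and $q(0,\cdot)=\phi'$. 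The associated spectral projection $\Pi(\xi)=\langle\tilde q(\xi),\cdot\rangle q(\xi)$ is analytic in $\xi$, and one decomposes
\[
e^{\mathcal A_\xi[\phi]t}=e^{\lambda(\xi)t}\Pi(\xi)+e^{\mathcal A_\xi[\phi]t}(I-\Pi(\xi)),
\]
where the complementary part decays exponentially in $t$ uniformly for $|\xi|<\xi_0$ by the spectral gap away from $\lambda(\xi)$, and $|e^{\lambda(\xi)t}|\leq e^{-\theta\xi^2 t/2}$ by assumption (ii).

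Next I would extract the modulational part. Define
\[
\gamma(x,t):=\int_{-\pi/T}^{\pi/T}\chi(\xi)\,e^{i\xi x}\,e^{\lambda(\xi)t}\,\langle\tilde q(\xi,\cdot),\check f(\xi,\cdot)\rangle_{L^2_{\rm per}(0,T)}\,d\xi,
\]
where $\chi$ is a smooth cutoff supported in $|\xi|<\xi_0$ with $\chi\equiv1$ near $0$. Using $|e^{\lambda(\xi)t}|\leq e^{-\theta\xi^2 t/2}$, Parseval, and Hausdorff-Young, the $L^2$-norm of $\gamma(\cdot,t)$ is controlled by $\bigl(\int e^{-\theta\xi^2 t}d\xi\bigr)^{1/2}\|f\|_{L^1\cap L^2}\lesssim (1+t)^{-1/4}\|f\|_{L^1\cap L^2}$, which gives the stated estimate on $\gamma$. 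The same heat-kernel computation yields the first bound of the theorem, since up to the exponentially decaying high-frequency contribution and the complementary low-frequency piece, $e^{\mathcal A[\phi]t}f$ is represented by the same oscillatory integral but with $q(\xi,\cdot)$ in place of $\phi'$; the $L^2$ norm estimate is the same.

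Finally, for the refined $(1+t)^{-3/4}$ decay, I would write $q(\xi,\cdot)=\phi'+\xi\,r(\xi,\cdot)$ with $r$ analytic and bounded, so that
\[
e^{\mathcal A[\phi]t}f-\phi'\gamma(\cdot,t)=\int e^{i\xi x}e^{\lambda(\xi)t}\chi(\xi)\,\xi\, r(\xi,\cdot)\,\langle\tilde q(\xi),\check f(\xi)\rangle\,d\xi+\text{(exponentially decaying remainders)}.
\]
The extra factor of $\xi$ upgrades the heat-kernel integral: $\bigl(\int \xi^2 e^{-\theta\xi^2 t}d\xi\bigr)^{1/2}\lesssim (1+t)^{-3/4}$, producing the claimed improvement. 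The exponentially decaying remainders account for both the high-frequency portion and the complementary low-frequency projection $I-\Pi(\xi)$, each of which is absorbed into $C(1+t)^{-3/4}$.

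The main technical obstacle I expect is not the integral estimates themselves, which are standard Gaussian/heat-kernel bounds once the structure is in place, but rather carefully assembling and justifying the spectral perturbation picture: verifying that the perturbation family $\xi\mapsto\mathcal A_\xi[\phi]$ is analytic in the sense required, confirming simplicity of the bifurcating eigenvalue $\lambda(\xi)$ from assumption (iii) (so that $\Pi(\xi)$ is analytic and rank one), and obtaining the uniform high-frequency semigroup bound on $|\xi|\geq\xi_0$ from the spectral bound (ii) alone, which typically requires a resolvent estimate. Once these structural ingredients are established, the three claimed bounds follow from the oscillatory-integral computations sketched above.
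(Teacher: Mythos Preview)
Your approach is essentially identical to the paper's proof in Section~\ref{S:stab_loc}: same Bloch decomposition, same high/low-frequency splitting via a smooth cutoff, same use of the rank-one spectral projection $\Pi(\xi)$ for small $|\xi|$, same expansion $q(\xi)=\phi'+\xi\, r(\xi)$ to gain the extra factor of $\xi$, and the same Gaussian integral estimates for the $(1+t)^{-1/4}$ and $(1+t)^{-3/4}$ rates.

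One technical correction: the Hausdorff--Young type bound $\sup_\xi\|\check f(\xi,\cdot)\|_{L^2_{\rm per}(0,T)}\lesssim \|f\|_{L^1(\RM)}$ as you state it is \emph{false}. Indeed, $\|\check f(\xi,\cdot)\|_{L^2_{\rm per}(0,T)}^2=T\sum_{\ell\in\ZM}|\hat f(\xi+2\pi\ell/T)|^2$, and by dilating a fixed $L^1$ function one can make arbitrarily many terms of this sum close to $|\hat f(0)|^2$ while keeping $\|f\|_{L^1}$ fixed. What is true, and what you actually need, is the weaker bound
\[
\sup_\xi\bigl|\langle\tilde q(\xi,\cdot),\check f(\xi,\cdot)\rangle_{L^2_{\rm per}(0,T)}\bigr|\lesssim \|f\|_{L^1(\RM)},
\]
which the paper obtains in \eqref{e:tech_ineq} by expanding the pairing in Fourier modes, bounding each $|\hat f(\xi+2\pi\ell/T)|\leq\|f\|_{L^1}$, and then using the $H^1_{\rm per}$-regularity of $\tilde q(\xi,\cdot)$ together with Cauchy--Schwarz to sum the remaining factor $\sum_\ell(1+\ell^2)^{-1/2}<\infty$. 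With this fix in place, your argument goes through exactly as written.
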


The proofs of both Theorem \ref{T:sub_main} and Theorem \ref{t:stab}  rely on a delicate decomposition of the semigroup
$e^{\mathcal{A}[\phi]t}$ acting on the appropriate underlying space ($L^2_{\rm per}(0,NT)$ and $L^2(\RM)$, respectively).  While the decomposition
is similar in the two cases, the subharmonic result in Theorem \ref{T:sub_main} requires an additional level of decomposition which is not needed in the localized case. As we will see in Section \ref{S:stab_per} below, our proof of Theorem \ref{T:sub_main} connects to both the exponential decay result in Theorem \ref{T:SS}
as well as the localized result in Theorem \ref{t:stab}.  Indeed, we will see that if one fixes $N\in\NM$ then the (linear) exponential decay of $NT$-periodic
perturbations follows naturally from our methodology.  Further, by formally taking $N\to\infty$ we see that the result in Theorem \ref{T:sub_main}
recovers the localized result in Theorem \ref{t:stab}.  For instance, we will see that, formally at least, the $NT$-periodic modulation function $\gamma_N$ in Theorem
\ref{T:sub_main} satisfies
\[
\lim_{N\to\infty}\gamma_N(x,t)=\gamma(x,t),
\]
where here $\gamma$ is the localized modulation function from Theorem \ref{t:stab}.  In this way, our work both expands and unifies several previous works in the literature.

\begin{remark}
During our proof of Theorem \ref{T:sub_main}, we will also see how the techniques presented provide exponential decay results of the form \eqref{e:SS_lin_est} with a constant
$C>0$ which is \emph{uniform} in $N$.  This extends the key linear estimate \eqref{e:SS_lin_est} used in \cite{SS19} to establish Theorem \ref{T:SS}.
\end{remark}

We also emphasize that our methodology used in the proofs of Theorem \ref{T:sub_main} and Theorem \ref{t:stab} is quite
general and applies more broadly than for the LLE \eqref{e:LLE}.  
As mentioned previously, our arguments are motivated by the recent work \cite{JNRZ_Invent} which, in turn, was based on a sequence
of previous works \cite{JZ10,JZN,JZ_11_1,JZ_11_2,JNRZ_13_1,JNRZ_13_2,BJNRZ_13}  by the same authors, all of which were eventually based on the seminal work of Schneider \cite{S98_1,S98_2}.  
In fact, our work relies on only a few key features of the linearized operator $\mathcal{A}[\phi]$.   Namely, Theorems \ref{T:sub_main} and Theorem \ref{t:stab}
continue to hold provided the following properties are satisfied:
\begin{enumerate}
\item The wave $\phi$ is diffusively spectrally stable, as defined in Definition \ref{Def:spec_stab}.
\item The operator $\mathcal{A}[\phi]$ generates  $C^0$-semigroups on $L^2(\RM)$ and $L^2_{\rm per}(0,NT)$, and for each $\xi\in[-\pi/T,\pi/T)$ the Bloch operators $\mathcal{A}_\xi[\phi]$ generate $C^0$-semigroups on $L^2_{\rm per}(0,T)$.
\item There exist positive constants $\mu_0$ and $C_0$ such that for each $\xi\in[-\pi/T,\pi/T)$ the Bloch resolvent operators satisfy
    \begin{equation}\label{e:resestg}
\|(i\mu-\mathcal{A}_\xi[\phi])^{-1}\|_{\mathcal L(L ^2_{\rm per}(0,T))}\leq C_0,~~{\rm for~all}~|\mu|>\mu_0.
\end{equation}
\end{enumerate}
Consequently, our work sets forth a general methodology for establishing analogous (linear) results to Theorem \ref{T:sub_main} and Theorem \ref{t:stab}
in more general contexts.  Further, note that the conditions (i)-(iii) above are slightly more general than the corresponding assumptions used in \cite{JNRZ_Invent}.
For our analysis of the LLE \eqref{e:LLE}, the first property above is the main assumption, that we know it holds at least for the periodic waves $\phi_\mu$ given by \eqref{e:periodic}, and the other two properties are proved in Section~\ref{ss:existsg}. 

Of course, it is natural to ask if our linear results can be extended to a result pertaining to the nonlinear dynamics of the LLE \eqref{e:LLE}.  
Using a nonlinear iteration scheme to establish such a nonlinear result, in Section \ref{S:nonlinear} below we will see that the structure of \eqref{e:LLE} implies such an iteration induces a loss of derivatives when considering space-time dependent modulations.  Such a phenomena is well known in the context
of reaction diffusion equations and systems of conservation laws, where the loss of derivatives can be compensated by a nonlinear damping effect which slaves
high Sobolev norms to low Sobolev norms.  However, such nonlinear damping techniques rely heavily on the damping in the governing evolution equation to correspond
to the highest order spatial derivative present.  In the case of the LLE, unfortunately, damping appears as the lowest-order derivative and hence one has
no hope of regaining derivatives through such nonlinear damping estimates.  Consequently, obtaining corresponding nonlinear stability results for the LLE \eqref{e:LLE}
is still an open problem\footnote{The ability to establish nonlinear results when one has the ability to regain these lost derivatives is currently under investigation by the authors.}.

\medskip

The outline for this paper is as follows.  
In Section \ref{S:prelim}, we review several preliminary results, including a review of Floquet-Bloch theory in the context of both
localized (Section \ref{S:bloch_loc}) and $NT$-periodic (Section \ref{S:bloch_per}) functions.  Specifically, we provide a characterization
of the spectrum of $\mathcal{A}[\phi]$ on $L^2(\RM)$ and  $L^2_{\rm per}(0,NT)$ in terms of the associated Bloch operators. In Section~\ref{ss:existsg}, we collect the properties of  these Bloch operators required for our analysis. We describe their spectral properties, then establish the existence and basic decay properties of the corresponding Bloch semigroups.
Section \ref{S:stab_loc} is dedicated to the proof of the localized result Theorem \ref{t:stab}, which in turn serves as motivation
for our proof of Theorem \ref{T:sub_main}, which is presented in Section \ref{S:stab_per}.  
In Section \ref{S:sharp}, we present a technical bound which establishes that the decay rates for localized perturbations in Theorem \ref{t:stab} in fact provide
\emph{sharp} uniform decay rates for subharmonic perturbations.  The proof of this key bound is given in the Appendix.
Finally, in Section \ref{S:nonlinear} we describe
the mathematical challenges encountered in establishing the corresponding nonlinear stability of diffusively spectrally stable periodic standing solutions of the LLE \eqref{e:LLE}.
Throughout our work, we aim to
make clear the ways in which our analysis unifies and expands previous works, as well as its ability to be generalized to other contexts.

\medskip

\noindent
{\bf Acknowledgments:} The work of MAJ and WRP was partially supported by the NSF under grant DMS-1614785.  MAJ was additionally supported
by the Simons Foundation Collaboration Grant number 714021.  MH was partially supported by the EUR EIPHI program (Contract No. ANR-17-EURE-0002) and the ISITE-BFC project (Contract No. ANR-15-IDEX-0003).  The authors also thank the referee for their helpful comments.

\section{Preliminaries}\label{S:prelim}


In the first two subsections below, we review some general results from the Floquet-Bloch theory for both  localized and subharmonic perturbations.  We then give some spectral properties and semigroup estimates for the Bloch operators obtained from the LLE \eqref{e:LLE}.

\subsection{Floquet-Bloch Theory for Localized Perturbations}\label{S:bloch_loc}

Consider a  matrix differential operator $\mathcal{A}$  with $T$-periodic coefficients which belong to $H^1_{\rm loc}(\RM)$. 
Assume that $\mathcal{A}$ is closed when acting on\footnote{Recall that we are writing $L^2(\RM)$ throughout rather than the technically more correct $(L^2(\RM))^n$ for an $n\times n$ matrix operator.}
$L^2(\RM)$ with domain $H^s(\RM)$, for some $s\geq1$.
Floquet theory implies for each $\lambda\in\CM$ that non-trivial solutions of the ordinary differential equation
\[
\mathcal{A}v=\lambda v
\]
cannot be integrable on $\RM$ and that, at best, they can be bounded functions on the real line (e.g., see \cite{KP_book,RS4}).  In particular,
the $L^2(\RM)$ spectrum of $\mathcal{A}$ can contain no eigenvalues, and hence must be entirely essential.

To characterize the essential spectrum of $\mathcal{A}$, note again by Floquet theory that any bounded solution of the above spectral problem must be of the form
\[
v(x)=e^{i\xi x}w(x),
\]
for some $T$-periodic function $w$ and constant $\xi\in[-\pi/T,\pi/T)$.  From these observations, it can be shown that $\lambda\in \CM$ belongs to the 
$L^2(\RM)$-spectrum of $\mathcal{A}$ if and only if  the problem
\begin{equation}\label{e:quasiper}
\begin{cases}
\mathcal{A}v=\lambda v\\
v(x+T)=e^{i\xi T}v(x)
\end{cases}
\end{equation}
admits a non-trivial solution for some $\xi\in[-\pi/T,\pi/T)$.  Equivalently, setting $v(x)=e^{i\xi x}w(x)$ in \eqref{e:quasiper} for some $T$-periodic $w$, 
we see that \eqref{e:quasiper} holds if and only if there exists
a $\xi\in[-\pi/T,\pi/T)$ and a non-trivial $w\in L^2_{\rm per}(0,T)$ such that
\[
\lambda w=e^{-i\xi x}\mathcal{A}e^{i\xi x}w=:\mathcal{A}_\xi w.
\]
The operators $\mathcal{A}_\xi$
are referred to as the Bloch operators associated to $\mathcal{A}$ and $\xi$ is referred to as the Bloch frequency. 
Each $\mathcal{A}_\xi$ acts on the space of $T$-periodic functions $L^2_{\rm per}(0,T)$, on which they are closed with dense and compactly embedded domain $H^s_{\rm per}(0,T)$.

Note that since the domains of the Bloch operators are compactly embedded in 
$L^2_{\rm per}(0,T)$, their spectra consist entirely of isolated eigenvalues of finite algebraic multiplicities which, furthermore, depend continuously
on the Bloch parameter $\xi$.  By the above Floquet-Bloch theory, we in fact have the spectral decomposition
\begin{equation}\label{e:bloch_spec}
  \sigma_{L^2(\RM)}\left(\mathcal{A}\right)=\bigcup_{\xi\in[-\pi/T,\pi/T)}\sigma_{L^2_{\rm per}(0,T)}\left(\mathcal{A}_\xi\right);
\end{equation}
see, for example, \cite{G1}.
This characterizes the $L^2(\RM)$ spectrum of $\mathcal{A}$ as the union of countably many continuous curves $\lambda(\xi)$
corresponding to the eigenvalues of the associated Bloch operators $\mathcal{A}_\xi$.

From the above characterization of the spectrum of $\mathcal{A}$, it is clearly desirable to have the ability to decompose arbitrary 
functions in $L^2(\RM)$ into superpositions of functions of the form $e^{i\xi x}w(x)$ with $\xi\in[-\pi/T,\pi/T)$ and $w\in L^2_{\rm per}(0,T)$.
This may be accomplished by noting that any function $g\in L^2(\RM)$ admits a Bloch decomposition, or inverse Bloch transform representation,
given by
\begin{equation}\label{e:Bloch}
g(x)=\frac{1}{2\pi}\int_{-\pi/T}^{\pi/T}e^{i\xi x}\check{g}(\xi,x)d\xi,~~{\rm where}~\check{g}(\xi,x):=\sum_{\ell\in\ZM}e^{2\pi i\ell x/T}\hat{g}(\xi+2\pi \ell/T)
\end{equation}
and $\hat{g}(\cdot)$ denotes the Fourier transform of $g$, defined here as $\hat{g}(\xi)= \int_{-\infty}^\infty e^{-i\xi x}g(x)dx$, $\xi\in\R$.
Indeed, observe that for any Schwartz function $g$ we have 
that
\[
 2\pi g(x)=\int_{-\infty}^\infty e^{i\xi x}\hat{g}(\xi)d\xi=\sum_{\ell\in\ZM}\int_{-\pi/T}^{\pi/T} e^{i(\xi+2\pi \ell/T)x}\hat{g}(\xi+2\pi \ell/T)d\xi=\int_{-\pi/T}^{\pi/T}e^{i\xi x}\check{g}(\xi,x)d\xi,
\]
and then the general result follows by density.
Note that for each fixed $\xi\in[-\pi/T,\pi/T)$ the function $\check{g}(\xi,\cdot)$ is  $T$-periodic  and hence the above procedure decomposes, as desired,
arbitrary functions in $L^2(\RM)$ into a (continuous) superposition of functions of the form  $e^{i\xi \cdot}\check{g}(\xi,\cdot)$, each of which has a fixed Bloch frequency $\xi$.

Defining the Bloch transform
\[
\mathcal{B}:L^2(\RM)\to L^2\left([-\pi/T,\pi/T);L^2_{\rm per}(0,T)\right)
\]
via the action $g\mapsto\check{g}$ with $\check{g}$ given by \eqref{e:Bloch},
the standard Parseval identity for the Fourier transform implies that the Bloch transform is a bounded linear operator,
\begin{equation}\label{e:parseval_loc}
  \|g\|^2_{L^2(\RM)}=\frac{1}{2\pi T}
  \int_{-\pi/T}^{\pi/T}\int_0^T\left|\mathcal{B}(g)(\xi,x)\right|^2dx~d\xi
=\frac{1}{2\pi T} \|\mathcal B(g)\|^2_{L^2\left([-\pi/T,\pi/T);L^2_{\rm per}(0,T)\right)}.
\end{equation}
Furthermore,  for $v\in H^s(\RM)$ we have
\[
\mathcal{B}\left(\mathcal{A}v\right)(\xi,x)=\left(\mathcal{A}_\xi\check{v}(\xi,\cdot)\right)(x)\quad\mbox{and}\quad
\mathcal{A}v(x)=\frac{1}{2\pi}\int_{-\pi/T}^{\pi/T} e^{i\xi x}\mathcal{A}_\xi\check{v}(\xi,x)d\xi,
\] 
showing that the Bloch transform $\mathcal{B}$ diagonalizes the $T$-periodic coefficient linear operator $\mathcal{A}$ in the same
way that the standard Fourier transform diagonalizes constant coefficient linear operators.
Here, the Bloch operators $\mathcal{A}_\xi$ may be viewed as operator-valued symbols of the linearized operator $\mathcal{A}$  under the action of $\mathcal{B}$. Furthermore, assuming that $\mathcal{A}$ and its associated Bloch operators
$\mathcal{A}_\xi$ generate $C^0$-semigroups on $L^2(\RM)$ and $L^2_{\rm per}(0,T)$, respectively, it is straightforward to check the identities 
\begin{equation}\label{e:blochsoln}
\mathcal{B}\left(e^{\mathcal{A}t}v\right)(\xi,x)=\left(e^{\mathcal{A}_\xi t}\check{v}(\xi,\cdot)\right)(x)
\quad\mbox{and}\quad
e^{\mathcal{A}t}v(x)=\frac{1}{2\pi}\int_{-\pi/T}^{\pi/T} e^{i\xi x}e^{\mathcal{A}_\xi t}\check{v}(\xi,x)d\xi.
\end{equation}

\subsection{Floquet-Bloch Theory for Subharmonic Perturbations}\label{S:bloch_per}

In this section we introduce a version of Floquet-Bloch theory that is appropriate for the study of subharmonic perturbations.  
Specifically, given a differential
operator $\mathcal{A}$ with $T$-periodic coefficients, 
as in the previous section,  we aim at understanding how the associated semigroup $e^{\mathcal{A}t}$ acts
on $NT$-periodic functions.  
To this end, first observe from \eqref{e:quasiper} that if we define the set
\begin{equation}\label{e:freq}
\Omega_N:=\left\{\xi\in[-\pi/T,\pi/T):e^{i\xi NT}=1\right\},
\end{equation}
then the perturbation $v$ satisfies $NT$-periodic boundary conditions precisely when $\xi\in\Omega_N$.  In particular, by \eqref{e:quasiper} we have the  equality
\[
\sigma_{L_{\rm per}^2(0,NT)}\left(\mathcal{A}\right)=\bigcup_{\xi\in\Omega_N}\sigma_{L^2_{\rm per}(0,T)}\left(\mathcal{A}_\xi\right),
\]
providing a description in terms of Bloch operators of the $NT$-periodic spectrum of $\mathcal{A}$.  
Note that the set $\Omega_N$ may be written explicitly when $N$ is even by
\[
\Omega_N=\left\{\xi_j=\frac{2\pi j}{NT}:j=-\frac{N}{2},~-\frac{N}{2}+1,\ldots,\frac{N}{2}-1\right\}
\]
and when $N$ is odd by
\[
\Omega_N=\left\{\xi_j=\frac{2\pi j}{NT}:j=-\frac{N-1}{2},-\frac{N-1}{2}+1,\dots,\frac{N-1}{2}\right\}.
\]
In particular, for each $N\in\NM$ we have $0\in\Omega_N$, $|\Omega_N|=N$ and $\Delta\xi_j:=\xi_j-\xi_{j-1}=2\pi/NT$ for each~$j$.

We now define, for each $\xi\in\Omega_N$, the $T$-periodic Bloch transform of a function $g\in L^2_{\rm per}(0,NT)$~as
\begin{equation}\label{e:Bloch_Tper}
\mathcal{B}_T(g)(\xi,x):=\sum_{\ell\in\ZM} e^{2\pi i\ell x/T}\hat{g}(\xi+2\pi\ell/T),
\end{equation}
where now $\hat{g}$ represents the Fourier transform of $g$ on the torus given by
\[
\hat{g}(\zeta)=\int_{-NT/2}^{NT/2}e^{-i\zeta y}g(y)dy.
\]
Note that $\hat{g}(\zeta)$ is the $k$-th Fourier coefficient of the $NT$-periodic function $g$ when  $\zeta=2\pi k/NT$, and that  $\mathcal{B}_T(g)(\xi,\cdot)$ is clearly a $T$-periodic function for all $\xi\in\Omega_N$. We recover $g$ through the Fourier series representation
\[
g(x)=\frac{1}{NT}\sum_{k\in\ZM}e^{2\pi ikx/NT}\hat{g}(2\pi k/NT),
\]
which together with the readily checked identity
\[
\sum_{k\in\ZM}f(2\pi k/NT)=\sum_{\xi\in\Omega_N}\sum_{\ell\in\Z} f(\xi + 2\pi \ell/T),
\]
valid for any $f$ for which the sum converges, gives the identity
\[
g(x)= \frac{1}{NT}\sum_{\xi\in\Omega_N}\sum_{\ell\in\Z} e^{i(\xi + 2\pi\ell/T)x}\hat{g}(\xi+2\pi \ell /T).
\]
This yields the inverse Bloch representation formula
\begin{equation}\label{e:inv_b_rep}
g(x)=\frac{1}{NT}\sum_{\xi\in\Omega_N}e^{i\xi x}\mathcal{B}_T(g)(\xi,x)
\end{equation}
which is valid for all $g\in L^2_{\rm per}(0,NT)$.
The equalities \eqref{e:Bloch_Tper} and \eqref{e:inv_b_rep} are the analogue of \eqref{e:Bloch} for $NT$-periodic functions.

Further, notice the following subharmonic Parseval identity
\begin{equation}\label{e:parseval_per}
\left<f,g\right>_{L^2(0,NT)} = \frac{1}{NT^2}\sum_{\xi\in\Omega_N}\left<\mathcal{B}_T(f)(\xi,\cdot),\mathcal{B}_T(g)(\xi,\cdot)\right>_{L^2(0,T)},
\end{equation}
valid for all $f,g\in L^2_{\rm per}(0,NT)$.  Indeed, observe that
\begin{align*}
\sum_{\xi\in\Omega_N}&\left<\mathcal{B}_T(f)(\xi,\cdot),\mathcal{B}_T(g)(\xi,\cdot)\right>_{L^2(0,T)} = 
	\sum_{\xi\in\Omega_N}\int_0^T \overline{\mathcal{B}_T(f)(\xi,x)}\mathcal{B}_T(g)(\xi,x) dx\\
&\qquad\qquad= \sum_{\xi\in\Omega_N}\int_0^T \sum_{k,\ell\in\Z}e^{2\pi i(\ell-k)x/T} \overline{\hat{f}(\xi + 2\pi k/T)}\hat{g}(\xi+2\pi\ell/T) dx\\
&\qquad\qquad= \sum_{\xi\in\Omega_N} \sum_{k,\ell\in\Z}\left(\int_0^Te^{2\pi i(\ell-k)x/T}dx\right) \overline{\hat{f}(\xi + 2\pi k/T)} \hat{g}(\xi+2\pi\ell/T)\\
&\qquad\qquad= T\sum_{\xi\in\Omega_N}\sum_{\ell\in\Z}\overline{\hat{f}(\xi + 2\pi \ell/T)} \hat{g}(\xi+2\pi\ell/T)\\
&\qquad\qquad= T\sum_{k\in\Z}\overline{\hat{f}\left(\frac{2\pi k}{NT}\right)}\hat{g}\left(\frac{2\pi k}{NT}\right) = NT^2 \left<f,g\right>_{L^2(0,NT)},
\end{align*}
as claimed. In particular, \eqref{e:parseval_per} implies that $\mathcal B_T$ is a bounded linear operator, just as the Bloch transform $\mathcal B$ in the case of localized perturbations. It also leads to the following lemma needed later in our analysis.

\begin{lemma}\label{L:per_bloch}
Let $N\in\NM$.  If $f\in L^2_{\rm per}(0,T)$ and $g\in L^2_{\rm per}(0,NT)$, then
\[
\mathcal{B}_T(fg)(\xi,x)=f(x)\mathcal{B}_T(g)(\xi,x),
\]
In particular, for such $f$ and $g$  we have the identity
\[
\left<f,g\right>_{L^2(0,NT)}=\frac{1}{T}\left<f(x),\mathcal{B}_T(g)(0,x)\right>_{L^2(0,T)}.
\]
\end{lemma}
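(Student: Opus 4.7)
The plan is to prove the two assertions in order, with the multiplication formula establishing the first and then feeding into a direct computation for the second. The whole argument is essentially bookkeeping with Fourier modes, so the main ``obstacle'' is keeping track of indices and making sure convergence of the relevant Fourier series is not an issue—this will be handled by first assuming $f$ is a trigonometric polynomial and then invoking density of such functions in $L^2_{\rm per}(0,T)$.

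For the multiplication formula, I would begin by expanding the $T$-periodic function $f$ as its Fourier series $f(x)=\sum_{m\in\ZM}c_m e^{2\pi i m x/T}$. For each individual mode, an elementary change of variables in the definition of the torus Fourier transform gives the modulation identity
\[
\widehat{e^{2\pi i m\cdot/T}g}(\zeta)=\hat g\bigl(\zeta-2\pi m/T\bigr).
\]
Substituting into the definition \eqref{e:Bloch_Tper} of $\mathcal{B}_T(fg)$ and interchanging the two sums (justified since $f$ is first taken to be a finite trigonometric polynomial), one obtains
\[
\mathcal{B}_T(fg)(\xi,x)=\sum_{\ell,m}c_m\, e^{2\pi i\ell x/T}\hat g\bigl(\xi+2\pi(\ell-m)/T\bigr).
\]
The reindexing $k=\ell-m$ then factors this as $\sum_m c_m e^{2\pi i m x/T}\cdot\sum_k e^{2\pi i kx/T}\hat g(\xi+2\pi k/T)=f(x)\mathcal{B}_T(g)(\xi,x)$. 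Extending to $f\in L^2_{\rm per}(0,T)$ is done by density, using continuity of both sides in the appropriate topology.

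For the inner product identity, I would apply the inverse Bloch representation \eqref{e:inv_b_rep} to $g$ to write
\[
\langle f,g\rangle_{L^2(0,NT)}=\frac{1}{NT}\sum_{\xi\in\Omega_N}\int_0^{NT}e^{i\xi x}\overline{f(x)}\,\mathcal{B}_T(g)(\xi,x)\,dx.
\]
The key observation is that $\overline{f(x)}\mathcal{B}_T(g)(\xi,x)$ is $T$-periodic in $x$, so splitting $[0,NT]$ into $N$ translates of $[0,T]$ factors each integral as a geometric sum times an integral over one period:
\[
\int_0^{NT}e^{i\xi x}\overline{f(x)}\mathcal{B}_T(g)(\xi,x)\,dx=\left(\sum_{j=0}^{N-1}e^{i\xi jT}\right)\int_0^T e^{i\xi y}\overline{f(y)}\mathcal{B}_T(g)(\xi,y)\,dy.
\]
Since $\xi\in\Omega_N$ satisfies $e^{i\xi NT}=1$, the geometric sum equals $N$ when $\xi=0$ and vanishes otherwise. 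Only the $\xi=0$ summand survives, and it gives exactly $\frac{1}{T}\langle f,\mathcal{B}_T(g)(0,\cdot)\rangle_{L^2(0,T)}$, as required. Alternatively—and perhaps more transparently—Part~2 can be derived directly from Part~1 by setting $\xi=0$ in the multiplication formula applied to $\overline{f}g$ and integrating over one period $[0,T]$, using that the $T$-averaged Bloch transform at $\xi=0$ coincides with the $NT$-integral $\int_0^{NT}\overline{f}g$ up to a factor of $T$.
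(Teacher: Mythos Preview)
Your proof of the first identity is essentially the same as the paper's: both expand $f$ in its $T$-periodic Fourier series, use the modulation identity $\widehat{e^{2\pi im\cdot/T}g}(\zeta)=\hat g(\zeta-2\pi m/T)$ to obtain a convolution formula for $\widehat{fg}$, substitute into the definition of $\mathcal{B}_T$, and reindex. Your additional step of reducing first to trigonometric polynomials and then passing to the limit is a reasonable bit of extra care that the paper omits.

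For the second identity you take a genuinely different route. The paper applies the subharmonic Parseval identity \eqref{e:parseval_per}, invokes the first part in the form $\mathcal{B}_T(f)=\mathcal{B}_T(f\cdot 1)=f\,\mathcal{B}_T(1)$, and then computes $\mathcal{B}_T(1)(\xi,\cdot)$ explicitly (it equals $NT$ at $\xi=0$ and vanishes for $\xi\in\Omega_N\setminus\{0\}$). You instead substitute the inverse Bloch representation \eqref{e:inv_b_rep} of $g$ directly into $\langle f,g\rangle_{L^2(0,NT)}$ and kill the $\xi\neq 0$ terms via the geometric sum $\sum_{j=0}^{N-1}e^{i\xi jT}$. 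The two approaches rest on the same underlying orthogonality---your geometric-sum vanishing is exactly what makes $\mathcal{B}_T(1)(\xi,\cdot)=0$ for $\xi\neq 0$---but your argument is a bit more self-contained since it does not route through Parseval. Your closing alternative (apply the first identity to $\overline{f}g$ at $\xi=0$ and integrate over one period, using $\int_0^T\mathcal{B}_T(h)(0,x)\,dx=T\hat h(0)=T\int_0^{NT}h$) is the most economical of the three and is correct.
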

\begin{proof}
For $f\in L^2_{\rm per}(0,T)$ and $g\in L^2_{\rm per}(0,NT)$,  we calculate the $NT$-periodic Fourier transform as
\begin{align*}
\widehat{fg}(z)&=\int_0^{NT}e^{-izy}f(y)g(y)dy\\
&=\int_0^{NT}e^{-izy}\left(\frac{1}{T}\sum_{k\in\ZM}e^{2\pi iky/T}\hat{f}\left({2\pi k}/{T}\right)\right)g(y)dy\\
&=\frac{1}{T}\sum_{k\in\ZM}\hat{f}\left({2\pi k}/{T}\right)\left(\int_0^{NT}e^{-i(z-2\pi k/T)y}g(y)dy\right)\\
&=\frac{1}{T}\sum_{k\in\ZM}\hat{f}\left({2\pi k}/{T}\right)\hat{g}(z-{2\pi k}/{T}).
\end{align*}
Thus, we have
\begin{align*}
\mathcal{B}_T(fg)(\xi,x)&=\frac{1}{T}\sum_{\ell\in\ZM}e^{2\pi i\ell x/T}\left(\sum_{k\in\ZM}\hat{f}({2\pi k}/{T})\hat{g}(\xi+{2\pi (\ell-k)}/{T})\right)\\
&=\frac{1}{T}\sum_{k\in\ZM}\hat{f}({2\pi k}/{T})\sum_{\ell\in\ZM} e^{2\pi i\ell x/T}\hat{g}\left(\xi+{2\pi (\ell-k)}/{T}\right)\\
&=\frac{1}{T}\sum_{k\in\ZM}e^{2\pi ikx/T}\hat{f}\left({2\pi k}/{T}\right)\sum_{\ell\in\ZM} e^{2\pi i\ell x/T}\hat{g}\left(\xi+{2\pi \ell}/{T}\right)\\
&=f(x)\mathcal{B}_T(g)(\xi,x),
\end{align*}
which proves the first equality.

Next, from this equality and Parseval's identity \eqref{e:parseval_per} we obtain that
\begin{align*}
\left<f,g\right>_{L^2(0,NT)}=\frac{1}{NT^2}\sum_{\xi\in\Omega_N}\left<f(x)\mathcal{B}_T(1)(\xi,\cdot),\mathcal{B}_T(g)(\xi,\cdot)\right>_{L^2(0,T)}.
\end{align*}
Noting that
\[
\hat{1}\left(\xi+2\pi \ell/T\right)=\begin{cases}
NT, &{\rm if }~~\xi+2\pi \ell/T=0\\
0, &{\rm otherwise}
\end{cases}
\]
and that the condition $\xi+2\pi \ell/T=0$ holds for $\xi\in\Omega_N$ and $\ell\in\ZM$ if and only if $\xi=\ell=0$, we find that
$\mathcal B_T(1)(\xi,x)=NT$, which proves the second equality.
\end{proof}

Similarly to the case of $L^2(\R)$, we have the connections between the operator $\mathcal{A}$ and its associated Bloch operators $\mathcal{A}_\xi$,
\[
\mathcal{A}g(x)=\frac{1}{NT}\sum_{\xi\in\Omega_N}e^{i\xi x}\mathcal{A}_\xi \mathcal{B}_T(g)(\xi,x),\quad g\in H^s_{\rm per}(0,NT),
\]
and then assuming they generate $C^0$-semigroups, also between the corresponding semigroups,
\begin{equation}\label{e:per_semigrp}
e^{\mathcal{A}t}g(x)=\frac{1}{NT}\sum_{\xi\in\Omega_N}e^{i\xi x}e^{\mathcal{A}_\xi t}\mathcal{B}_T(g)(\xi,x),\quad g\in L^2_{\rm per}(0,NT).
\end{equation}

An important observation here is that since \eqref{e:inv_b_rep} can be rewritten as
\[
g(x)=\frac{1}{2\pi}\sum_{\xi\in\Omega_N}e^{i\xi x}\mathcal{B}_T(g)(\xi,x)\Delta\xi,
\quad \Delta\xi=\frac{2\pi}{NT},
\]
the representation \eqref{e:inv_b_rep} has the form of a Riemann sum approximation of the Bloch decomposition formula  \eqref{e:Bloch} for functions $g\in L^2(\R)$.  Similarly, the representation in \eqref{e:per_semigrp} may be considered as a Riemann sum approximation of 
the formula \eqref{e:blochsoln}.  These interpretations of the above identities will be crucial in the forthcoming analysis.

\begin{remark}
In the case $N=1$,  corresponding to co-periodic perturbations, we have $\Omega_1=\{0\}$ and hence
the Bloch transform $\mathcal{B}_T(g)$ simply recovers the Fourier series representation for $g\in L^2_{\rm per}(0,T)$.  Furthermore, the representation formulas for $\mathcal A$ and $e^{\mathcal{A}t}$ given above are reduced to the obvious equalities $\mathcal A=\mathcal A_0$ and $e^{\mathcal{A}t}=e^{\mathcal{A}_0t}$.
\end{remark}

\subsection{Properties of Bloch Semigroups }\label{ss:existsg}

Now we restrict to the linear operators $\mathcal A[\phi]$ and  $\mathcal A_\xi[\phi]$ found from the LLE \eqref{e:LLE}, assuming that $\phi\in H^1_{\rm loc}(\RM)$ is a diffusively spectrally stable $T$-periodic stationary solution of the profile equation \eqref{e:profile}. Recall that $\mathcal A[\phi]$ is given by \eqref{e:Aphi} from which we find the Bloch operators  
\begin{equation}\label{e:Aphixi}
\mathcal A_\xi[\phi]=- I+\mathcal{J}\mathcal{L}_\xi[\phi],
\end{equation}
with
\[
\mathcal{J}=\left(\begin{array}{cc}0&-1\\1&0\end{array}\right),\quad 
\mathcal{L}_\xi[\phi] = \left(\begin{array}{cc} -\b (\partial_x+i\xi)^2 - \a  + 3\phi_{r}^2 + \phi_{i}^2 & 2\phi_{r}\phi_{i} \\
  2\phi_{r}\phi_{i} & -\b (\partial_x+i\xi)^2 - \a  + \phi_{r}^2 + 3\phi_{i}^2\end{array}\right).
\]

Our first lemma summarizes the spectral properties of the Bloch operators $\mathcal A_\xi[\phi]$ which directly follow from the definition of diffusive spectral stability.

\begin{lemma}[Spectral Preparation]\label{L:spec_prep}
  Suppose $\phi\in H^1_{\rm loc}(\RM)$ is a diffusively spectrally stable $T$-periodic stationary solution of the LLE \eqref{e:LLE}.  Then the following properties hold.
  \begin{enumerate}
\item For any fixed $\xi_0\in(0,\pi/T)$, there exists a positive constant $\delta_0$  such that
\[
\Re\sigma(\mathcal{A}_\xi[\phi])<-\delta_0,
\]
for all $\xi\in[-\pi/T,\pi/T)$ with $|\xi|>\xi_0$.
  \item There exist positive constants $\xi_1$, $\delta_1$, and $d$ such that for any $|\xi|<\xi_1$ the spectrum of $\mathcal{A}_\xi[\phi]$ decomposes into two
disjoint subsets
\[
\sigma(\mathcal{A}_\xi[\phi])=\sigma_-(\mathcal{A}_\xi[\phi])\cup\sigma_0(\mathcal{A}_\xi[\phi]),
\]
with the following properties:
\begin{enumerate}
\item $\Re\sigma_-(\mathcal{A}_\xi[\phi])<-\delta_1$ and $\Re\sigma_0(\mathcal{A}_\xi[\phi])>-\delta_1$;
\item the set $\sigma_0(\mathcal{A}_\xi[\phi])$ consists of a single negative eigenvalue $\lambda_c(\xi)$ which is analytic in $\xi$ and expands as
\[
\lambda_c(\xi)=ia\xi-d\xi^2+\mathcal{O}(\xi^3),
\]
for $|\xi|\ll 1$ for some $a\in\RM$ and $d>0$;
\item the eigenfunction associated to $\lambda_c(\xi)$ is analytic near $\xi=0$ and expands as
\[
\Phi_\xi(x)=\phi'(x)+\mathcal{O}(\xi),
\]
where $\phi'$ is the derivative of the $T$-periodic solution $\phi$.
\end{enumerate}
\end{enumerate}
\end{lemma}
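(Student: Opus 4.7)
The plan is to deduce (i) essentially for free from the parabolic spectral bound in Definition~\ref{Def:spec_stab}(ii), and to handle (ii) by applying Kato's analytic perturbation theory to the quadratic family $\xi \mapsto \mathcal{A}_\xi[\phi]$, then extracting the shape of the Taylor expansion of the perturbed eigenvalue from the fact that $\mathcal{A}[\phi]$ is a real operator. Indeed, (i) follows immediately since Definition~\ref{Def:spec_stab}(ii) gives $\Re\sigma(\mathcal{A}_\xi[\phi]) \leq -\theta \xi^2 \leq -\theta \xi_0^2$ for all $|\xi| \geq \xi_0$, so any $\delta_0 \in (0,\theta\xi_0^2)$ suffices.

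For part (ii), my starting observation would be that expanding $(\partial_x + i\xi)^2 = \partial_x^2 + 2i\xi\partial_x - \xi^2$ in \eqref{e:Aphixi} gives
\[
\mathcal{A}_\xi[\phi] = \mathcal{A}_0[\phi] - 2i\beta\xi\, \mathcal{J}\partial_x + \beta\xi^2\, \mathcal{J},
\]
which exhibits $\{\mathcal{A}_\xi[\phi]\}_\xi$ as an entire holomorphic family of closed operators of Kato's type~(A) on the fixed domain $H^2_{\rm per}(0,T)$: both $\mathcal{J}\partial_x$ and $\mathcal{J}$ are relatively bounded with respect to $\mathcal{A}_0[\phi]$ with arbitrarily small relative bound by a standard interpolation estimate. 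Since $H^2_{\rm per}(0,T) \hookrightarrow L^2_{\rm per}(0,T)$ is compact, $\mathcal{A}_0[\phi]$ has compact resolvent and hence discrete spectrum; by Definition~\ref{Def:spec_stab}(iii), $\lambda=0$ is simple and isolated with eigenfunction $\phi'$. Kato's perturbation theorem then produces $\xi_1 > 0$, an analytic eigenvalue $\lambda_c:(-\xi_1,\xi_1)\to\CM$ with $\lambda_c(0)=0$, and an analytic eigenfunction $\Phi_\xi = \phi' + \mathcal{O}(\xi)$, which establishes (c); upper semicontinuity of the separated parts of the spectrum then yields a $\delta_1 > 0$ (and, if necessary, a smaller $\xi_1$) producing the splitting $\sigma(\mathcal{A}_\xi[\phi]) = \sigma_-(\mathcal{A}_\xi[\phi]) \cup \{\lambda_c(\xi)\}$ with the gap conditions in (a).

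To obtain the Taylor expansion in (b), I would exploit the real structure of $\mathcal{A}[\phi]$: since $\mathcal{A}[\phi]$ maps real-valued functions to real-valued functions, pointwise complex conjugation intertwines the Bloch operators via $\overline{\mathcal{A}_\xi[\phi] u} = \mathcal{A}_{-\xi}[\phi]\, \overline{u}$. Applied to the analytic eigenpair, uniqueness of the perturbed eigenvalue near $0$ forces $\lambda_c(-\xi) = \overline{\lambda_c(\xi)}$ for small $|\xi|$. Writing $\lambda_c(\xi) = c_1 \xi + c_2 \xi^2 + \mathcal{O}(\xi^3)$ and matching coefficients forces $c_1 \in i\RM$ and $c_2 \in \RM$; setting $c_1 = ia$ and $c_2 = -d$ recovers the desired form. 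Finally, $d > 0$ is forced by the diffusive bound $\Re \lambda_c(\xi) \leq -\theta\xi^2$ of Definition~\ref{Def:spec_stab}(ii), which yields $d \geq \theta > 0$.

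Most of the proof is then a direct application of standard machinery. The main technical point to check is the type-(A) structure of $\{\mathcal{A}_\xi[\phi]\}$, specifically the relative boundedness of the first-order perturbation $\mathcal{J}\partial_x$ with respect to $\mathcal{A}_0[\phi]$; this merits a little care since $\mathcal{A}_0[\phi]$ is not self-adjoint, so it should be verified through sectoriality/resolvent estimates rather than by appeal to self-adjoint interpolation. Beyond that, the only genuinely problem-specific ingredient is the real-structure symmetry $\mathcal{A}_{-\xi}[\phi] = \mathcal{C}\mathcal{A}_\xi[\phi]\mathcal{C}$ (with $\mathcal{C}$ denoting complex conjugation), which is what pins down the purely imaginary linear coefficient and the real quadratic coefficient of $\lambda_c(\xi)$.
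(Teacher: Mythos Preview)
Your proof is correct and follows the same strategy as the paper, which simply cites ``standard spectral perturbation theory'' without further detail. Your version is in fact more complete: you explicitly invoke Kato's type-(A) machinery to justify the analyticity of $\lambda_c$ and $\Phi_\xi$ (the paper's proof only mentions continuous dependence), and you supply the conjugation-symmetry argument $\lambda_c(-\xi)=\overline{\lambda_c(\xi)}$ for the shape of the Taylor expansion and the diffusive bound for $d>0$, both of which the paper leaves implicit.
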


\begin{proof} The first part is an immediate consequence of the properties (i) and (ii) in the Definition~\ref{Def:spec_stab}. To prove the second part, observe that since $\lambda=0$ is a simple, isolated eigenvalue of $\mathcal{A}_0[\phi]$, and since $\mathcal{A}_\xi[\phi]$ depend continuously on $\xi$, standard spectral perturbation theory implies the continuous dependence on $\xi$ of the eigenvalue $\lambda_c(\xi)$ and of its associated eigenvector $\Phi_\xi(x)$. 
\end{proof}

\begin{remark}
In the case where the background wave $\phi$ is even (up to translation), a simple symmetry argument implies that $a=0$ in the expansion of $\lambda_c(\xi)$ above.  
Note that the diffusively spectrally stable solutions \eqref{e:periodic} constructed and studied via bifurcation theory in \cite{DH18_2,DH18_1}
are all even.  For more general waves, however, one may have $a\neq 0$. 
\end{remark}

Our next result establishes that the linearized operator $\mathcal{A}[\phi]$ and its associated Bloch operators generate $C^0$-semigroups.  The proof
is elementary and relies on a decomposition of these operators into a constant coefficient operator plus a bounded perturbation.  
Specifically, we establish the following result (see also \cite[Lemma 1]{SS19}).

\begin{lemma}\label{L:C0}
Assume that $\phi\in H^1_{\rm loc}(\RM)$ is  a $T$-periodic solution  of the stationary LLE \eqref{e:profile}. The linear operator  $\mathcal A[\phi]$ acting in either $L^2(\R)$ 
or $L^2_{\rm per}(0,NT)$ generates a $C^0$ semigroup.  Similarly, for each $\xi\in[-\pi/T,\pi/T)$ the Bloch operators $\mathcal A_\xi[\phi]$ acting in $L^2_{\rm per}(0,T)$  generate $C^0$-semigroups.
\end{lemma}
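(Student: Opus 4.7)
The plan is to decompose each operator into a constant-coefficient differential part that generates a $C^0$-semigroup (in fact, a unitary group) plus a bounded zero-order perturbation coming from the periodic potential, and then to apply the classical bounded perturbation theorem for semigroups.

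Concretely, I would write
\[
\mathcal{A}[\phi] \;=\; \mathcal{A}_0 + \mathcal{B}[\phi], \qquad \mathcal{A}_0 := -\beta\,\mathcal{J}\,\partial_x^2,
\]
where $\mathcal{B}[\phi]$ collects the zero-order terms, namely $-I$, the contribution $\alpha\mathcal{J}$, and the multiplication by the $2\times2$ matrix potential
\[
M(x) = \begin{pmatrix} 3\phi_r^2 + \phi_i^2 & 2\phi_r\phi_i \\ 2\phi_r\phi_i & \phi_r^2 + 3\phi_i^2 \end{pmatrix}.
\]
Since $\phi\in H^1_{\rm loc}(\RM)$ is $T$-periodic, the one-dimensional Sobolev embedding $H^1_{\rm per}(0,T)\hookrightarrow C_{\rm per}([0,T])$ gives $\phi\in L^\infty(\RM)$, so the entries of $M(x)$ are bounded. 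Hence $\mathcal{B}[\phi]$ is a bounded multiplication operator on each of $L^2(\RM)$, $L^2_{\rm per}(0,NT)$, and $L^2_{\rm per}(0,T)$.

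For the principal part, I would observe that the skew-symmetric matrix $\mathcal{J}$ has eigenvalues $\pm i$ and is unitarily diagonalizable. Viewing $\mathcal{A}_0$ through the Fourier transform (on $\RM$) or Fourier series (on the torus), its symbol at Fourier variable $\eta$ is $\beta\mathcal{J}\eta^2$, which is skew-Hermitian. Thus $\mathcal{A}_0$ is skew-adjoint on the corresponding $L^2$ space with domain $H^2$, and Stone's theorem produces a strongly continuous unitary group generated by $\mathcal{A}_0$. The bounded perturbation theorem (see, e.g., Pazy) then yields that $\mathcal{A}[\phi]=\mathcal{A}_0+\mathcal{B}[\phi]$ generates a $C^0$-semigroup on $L^2(\RM)$ and on $L^2_{\rm per}(0,NT)$.

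For the Bloch operators, the same decomposition applies after replacing $\partial_x$ by $\partial_x+i\xi$:
\[
\mathcal{A}_\xi[\phi] \;=\; -\beta\,\mathcal{J}\,(\partial_x+i\xi)^2 + \mathcal{B}[\phi].
\]
Acting on the Fourier basis $\{e^{2\pi i k x/T}\}_{k\in\ZM}$ of $L^2_{\rm per}(0,T)$, the principal part is diagonal with symbol $\beta\mathcal{J}(2\pi k/T+\xi)^2$, which is again skew-Hermitian; thus it is skew-adjoint with domain $H^2_{\rm per}(0,T)$ and generates a unitary group on $L^2_{\rm per}(0,T)$. Adding the bounded perturbation $\mathcal{B}[\phi]$, the bounded perturbation theorem again produces a $C^0$-semigroup. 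The only nontrivial step is the boundedness of $\mathcal{B}[\phi]$, which is the one place the regularity hypothesis $\phi\in H^1_{\rm loc}(\RM)$ (together with its $T$-periodicity) is used; everything else is a routine application of Stone's theorem and the bounded perturbation theorem.
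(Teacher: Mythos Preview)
Your proof is correct and follows essentially the same approach as the paper: split off the constant-coefficient principal part $-\beta\mathcal{J}(\partial_x+i\xi)^2$, show it generates a $C^0$-semigroup, and then apply the bounded perturbation theorem (Pazy) to absorb the zero-order terms, whose boundedness follows from $\phi\in L^\infty$ via the Sobolev embedding. The only cosmetic difference is that you invoke Stone's theorem (the Fourier symbol $\beta\eta^2\mathcal{J}$ is skew-Hermitian, hence the principal part is skew-adjoint and generates a unitary group), whereas the paper instead computes the resolvent norm directly, obtaining $\|(\lambda-\mathcal{A}_\xi^0)^{-1}\|=1/\Re\lambda$ for $\Re\lambda>0$, and then applies Hille--Yosida; both routes encode the same fact that the spectrum of the principal part lies on $i\RM$.
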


\begin{proof} The proofs being the same in the three cases, we only consider the  Bloch operators $\mathcal A_\xi[\phi]$ acting in $L^2_{\rm per}(0,T)$. 
From \eqref{e:Aphixi} we see that $\mathcal A_\xi[\phi]$ is a bounded perturbation of the operator \[\mathcal A_\xi^0=-\beta(\partial_x+i\xi)^2\mathcal J.\] Since bounded perturbations of generators of  $C^0$-semigroups also generate a $C^0$-semigroups \cite[Theorem 1.1, Section 3.1]{Pazy},
it remains to prove the result for the operator $\mathcal A_\xi^0$. This operator is closed on  $L^2_{\rm per}(0,T)$ with domain  $H^2_{\rm per}(0,T)$ and has constant coefficients. Using Fourier analysis, it is then straightforward to check that its spectrum lies on the imaginary axis $i\R$ and that for any complex number $\lambda$ in its resolvent set the norm of the resolvent operator is given by
  \[
\|(\lambda-\mathcal{A}_\xi^0)^{-1}\|_{\mathcal L(L ^2_{\rm per}(0,T))} = \frac1{\mbox{dist}(\lambda, \sigma(\mathcal{A}_\xi^0))}.
\]
Consequently, for any complex number $\lambda$ with $\Re\lambda>0$ we have 
\[
\|(\lambda-\mathcal{A}_\xi^0)^{-1}\|_{\mathcal L(L ^2_{\rm per}(0,T))} = \frac1{\Re\lambda}.
\]
Together with the Hille-Yosida theorem (e.g., see \cite[Chapter IX.2]{Kato}) this implies that $\mathcal{A}_\xi^0$ generates a $C^0$-semigroup which proves the lemma.
\end{proof}

The next step consists in proving the resolvent estimate \eqref{e:resestg} for the Bloch operators $\mathcal{A}_\xi[\phi]$ given by the LLE \eqref{e:LLE}\footnote{For the periodic waves $\phi_\mu$ given by \eqref{e:periodic} this result can be easily proved using perturbation arguments, since the operators are in this case small bounded perturbations of operators with constant coefficients.}.

\begin{lemma}\label{l:resolvent}
  Suppose $\phi\in H^1_{\rm loc}(\RM)$ is a diffusively spectrally stable $T$-periodic stationary solution of the LLE \eqref{e:LLE}. 
There exist positive constants $\mu_0$ and $C_0$ such that for each $\xi\in[-\pi/T,\pi/T)$ the Bloch resolvent operators satisfy
    \begin{equation}\label{e:resest}
\|(i\mu-\mathcal{A}_\xi[\phi])^{-1}\|_{\mathcal L(L ^2_{\rm per}(0,T))}\leq C_0,~~{\rm for~all}~|\mu|>\mu_0.
\end{equation}
\end{lemma}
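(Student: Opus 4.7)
The plan is to avoid the obvious but ineffective approach of perturbing off the constant-coefficient principal part $\mathcal{A}_\xi^0 := -\beta(\partial_x+i\xi)^2\mathcal{J}$: this operator has purely imaginary spectrum $\{\pm i\beta(2\pi n/T+\xi)^2 : n\in\ZM\}$, so its resolvent norm on the imaginary axis is not uniformly bounded, and a Neumann series built on the bounded perturbation $\mathcal{A}_\xi[\phi]-\mathcal{A}_\xi^0$ fails at resonant values of $\mu$. Instead, I would exploit the Hamiltonian structure $\mathcal{A}_\xi[\phi] = -I + \mathcal{J}\mathcal{L}_\xi[\phi]$, in which $\mathcal{L}_\xi[\phi]$ is self-adjoint on $L^2_{\rm per}(0,T)$ (since $-\beta(\partial_x+i\xi)^2 I$ is self-adjoint for real $\beta$ and $\xi$ and the matrix potential is real symmetric) while $\mathcal{J}$ is unitary and skew-adjoint with $\mathcal{J}^{-1}=-\mathcal{J}$. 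Multiplying $(i\mu-\mathcal{A}_\xi[\phi])v=f$ through by $-\mathcal{J}$ converts the resolvent equation into
\[
\mathcal{L}_\xi[\phi]v + (1+i\mu)\mathcal{J}v = \mathcal{J}f,
\]
which is amenable to direct energy estimates.

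The key next step is to test this identity against $v$ and against $\mathcal{J}v$ in $L^2_{\rm per}(0,T)$, using that $\langle\mathcal{L}_\xi[\phi]v,v\rangle\in\R$, $\langle\mathcal{J}v,v\rangle\in i\R$, $\langle\mathcal{J}v,\mathcal{J}v\rangle=\|v\|^2$, and crucially that $\mathcal{J}$ (as a constant matrix) commutes with $\partial_x+i\xi$, which forces $\langle(\partial_x+i\xi)^2 v,\mathcal{J}v\rangle = -\langle(\partial_x+i\xi)v,\mathcal{J}(\partial_x+i\xi)v\rangle \in i\R$. Writing $\langle\mathcal{J}v,v\rangle = iK$ with $K\in\R$, the imaginary part of the $v$-test yields $|K|\leq\|f\|\|v\|$, while its real part, after absorbing the $\mu K$ term, produces an upper bound
\[
|\beta|\,\|(\partial_x+i\xi)v\|^2 \leq C\|v\|^2 + (1+|\mu|)\|f\|\|v\|,
\]
uniform in the sign of $\beta$. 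From the imaginary part of the $\mathcal{J}v$-test, the complementary lower bound
\[
|\mu|\,\|v\|^2 \leq |\beta|\,\|(\partial_x+i\xi)v\|^2 + C\|v\|^2 + \|f\|\|v\|
\]
then follows because the only potentially unbounded contribution to $\Im\langle\mathcal{L}_\xi[\phi]v,\mathcal{J}v\rangle$ is controlled in modulus by $|\beta|\,\|(\partial_x+i\xi)v\|^2$. Combining these two inequalities eliminates the derivative norm and yields $(|\mu|-C_1)\|v\|^2 \leq (|\mu|+2)\|f\|\|v\|$, from which $\|v\| \leq C_0\|f\|$ for $|\mu|>\mu_0:=2C_1$, with $C_0$ and $\mu_0$ independent of $\xi$.

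The main subtlety is the indefinite sign of $\beta=\pm 1$: no coercivity of the quadratic form associated to $\beta(\partial_x+i\xi)^2$ is available, which forces the argument to balance the two scalar identities above so that $\beta$ only ever enters through $|\beta|$; a single energy inequality seems hopeless. A secondary technical point is upgrading the a priori estimate $\|v\|\leq C_0\|f\|$ to the statement that $i\mu$ actually lies in the resolvent set $\rho(\mathcal{A}_\xi[\phi])$: this I would obtain by combining the estimate with Lemma~\ref{L:C0}, which guarantees $\lambda\in\rho(\mathcal{A}_\xi[\phi])$ for $\Re\lambda$ sufficiently large, together with the compact embedding $H^2_{\rm per}(0,T)\hookrightarrow L^2_{\rm per}(0,T)$---so that $\mathcal{A}_\xi[\phi]$ has compact resolvent and hence purely discrete spectrum---and a standard continuation argument along a path connecting a known resolvent point to $i\mu$, on which the a priori estimate rules out spectrum.
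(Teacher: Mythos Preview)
Your argument is correct. The two tested identities combine exactly as you describe: the real part of the $v$-pairing bounds $|\beta|\,\|(\partial_x+i\xi)v\|^2$ from above (after feeding in $|K|\leq\|f\|\|v\|$), and the imaginary part of the $\mathcal{J}v$-pairing bounds $|\mu|\,\|v\|^2$ from above by the same derivative term plus lower order, so the sign of $\beta$ indeed drops out. The upgrade from the a~priori estimate to invertibility via compact resolvent and non\-emptiness of the resolvent set from Lemma~\ref{L:C0} is also fine.

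The paper's own proof is quite different in character: it is a one-line citation to \cite[Proposition~1]{SS19} for the case $\xi=0$, together with the remark that the argument there is written in Fourier modes $e^{2\pi i k x/T}$ and carries over verbatim upon replacing the Fourier index $k$ by $k+\xi$. So the paper outsources the analytic content entirely, while you supply a direct, self-contained energy estimate that exploits the Hamiltonian structure $\mathcal{A}_\xi[\phi]=-I+\mathcal{J}\mathcal{L}_\xi[\phi]$ in physical space. Your approach has the advantage of being independent of any Fourier-mode bookkeeping and of making the $\xi$-uniformity completely transparent (the only place $\xi$ enters is through $(\partial_x+i\xi)$, and all bounds involve only $\|(\partial_x+i\xi)v\|$). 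The paper's approach has the advantage of brevity, and --- per Remark~\ref{r:subharmonic}(i) --- the cited argument in fact yields the slightly stronger conclusion that the resolvent bound holds on strips $\Re\lambda>-1+\varepsilon$, not just on the imaginary axis; your energy method would need a small modification (replacing $1+i\mu$ by $1+\delta+i\mu$) to recover that.
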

\begin{proof}
For $\xi=0$ this result has been proved  in \cite[Proposition 1]{SS19}. It can be easily extended to $\xi\in[-\pi/T,\pi/T)$  by replacing the (spatial) Fourier frequency $k$ in their expression of the linear operator by $k+\xi$.
\end{proof}

Combining the result in this lemma with the spectral properties in Lemma~\ref{L:spec_prep}, we obtain the following estimates for the Bloch semigroups $e^{\mathcal A_\xi[\phi]t}$.

\begin{proposition}\label{P:exp_decay}
  Suppose $\phi\in H^1_{\rm loc}(\RM)$ is a diffusively spectrally stable $T$-periodic stationary solution of the LLE \eqref{e:LLE}.
  Then the following properties hold.
  \begin{enumerate}
  \item For any fixed $\xi_0\in(0,\pi/T)$, there exist positive constants $C_0$ and $\eta_0$  such that
    \[
\left\|e^{\mathcal{A}_\xi[\phi] t}\right\|_{\mathcal L(L^2_{\rm per}(0,T))}\leq C_0e^{-\eta_0t}, 
\]
for all $t\geq0$ and all $\xi\in[-\pi/T,\pi/T)$ with $|\xi|>\xi_0$.
  \item With $\xi_1$ chosen as in  Lemma~\ref{L:spec_prep}~(ii), there exist positive constants  $C_1$ and $\eta_1$ such that for any $|\xi|<\xi_1$, if $\Pi(\xi)$ is the spectral projection onto the (one-dimensional) eigenspace associated to the eigenvalue  $\lambda_c(\xi)$  given by Lemma~\ref{L:spec_prep}~(ii), then 
\[
\left\|e^{\mathcal{A}_\xi[\phi]t}\left(I-\Pi(\xi)\right)\right\|_{\mathcal L(L^2_{\rm per}(0,T))}\leq C_1 e^{-\eta_1t},
\]
for all $t\geq0$. 
  \end{enumerate}
\end{proposition}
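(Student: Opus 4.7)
The plan is to apply the Gearhart--Prüss theorem on the Hilbert space $L^2_{\rm per}(0,T)$, which equates the exponential growth bound of a $C^0$-semigroup with the abscissa of uniform boundedness of the resolvent of its generator on right half-planes. Both parts of the proposition then reduce to proving a $\xi$-uniform resolvent bound on a half-plane strictly inside $\{\Re\lambda<0\}$, using Lemma \ref{L:spec_prep} to locate the spectrum and Lemma \ref{l:resolvent} to tame the resolvent at large imaginary part. The $C^0$-semigroup structure from Lemma \ref{L:C0}, together with the boundedness of the $\xi$-dependent perturbation implicit in \eqref{e:Aphixi}, provides an \emph{a priori} estimate $\|(\lambda-\mathcal{A}_\xi[\phi])^{-1}\|\leq M/(\Re\lambda-\omega)$ for $\Re\lambda>\omega$ with $M,\omega$ independent of $\xi$, so the problem is reduced to vertical strips of bounded $\Re\lambda$.

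For part (i), by Lemma \ref{L:spec_prep}(i) the resolvent $(\lambda-\mathcal{A}_\xi[\phi])^{-1}$ is analytic on $\{\Re\lambda>-\delta_0\}$ uniformly for $|\xi|\geq\xi_0$. I would split the strip $\{-\eta_0\leq\Re\lambda\leq \omega+1\}$, for a small $\eta_0>0$ to be chosen, into the high-imaginary piece $\{|\Im\lambda|>\mu_0\}$ and the compact low-imaginary piece $\{|\Im\lambda|\leq\mu_0\}$. On the first, a Neumann-series expansion about $i\mu$ upgrades the bound $\|(i\mu-\mathcal{A}_\xi[\phi])^{-1}\|\leq C_0$ of Lemma \ref{l:resolvent} to $\|(\lambda-\mathcal{A}_\xi[\phi])^{-1}\|\leq 2C_0$ whenever $|\Re\lambda|\leq 1/(2C_0)$, uniformly in $\xi$; requiring $\eta_0\leq\min(\delta_0,1/(2C_0))$ then handles this piece. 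On the second, $\mathcal{A}_\xi[\phi]$ in \eqref{e:Aphixi} depends polynomially on $\xi$, so the family extends continuously to the compact set $\{\xi_0\leq|\xi|\leq\pi/T\}$; joint continuity of the resolvent in $(\xi,\lambda)$ and compactness then yield a $\xi$-uniform bound on the compact box. Gearhart--Prüss delivers part (i) with any rate strictly less than $\eta_0$.

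For part (ii), since $\lambda_c(\xi)$ is a simple isolated eigenvalue depending analytically on $\xi\in(-\xi_1,\xi_1)$, the spectral projection $\Pi(\xi)$ is analytic in $\xi$ and uniformly bounded on any compact subinterval. The restriction of $\mathcal{A}_\xi[\phi]$ to $\mathrm{Range}(I-\Pi(\xi))$ has spectrum $\sigma_-(\mathcal{A}_\xi[\phi])\subset\{\Re\lambda<-\delta_1\}$ by Lemma \ref{L:spec_prep}(ii). I would repeat the two-region argument of part (i) verbatim --- Neumann series in the high-imaginary piece, continuity-plus-compactness on $\{|\xi|\leq\xi_1\}$ in the low-imaginary piece --- to obtain a $\xi$-uniform bound on the resolvent of the restriction on some half-plane $\{\Re\lambda>-\eta_1\}$ with $0<\eta_1<\delta_1$. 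Applying Gearhart--Prüss on the invariant subspace $\mathrm{Range}(I-\Pi(\xi))$, combined with uniform boundedness of $I-\Pi(\xi)$, yields the claimed estimate.

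The main obstacle is uniformity in $\xi$. For the high-imaginary region Lemma \ref{l:resolvent} is already $\xi$-uniform, and the Neumann series preserves this uniformity trivially. The real point requiring care is joint continuity of the resolvent in $(\xi,\lambda)$ on the compact low-imaginary box, which reduces to norm-resolvent continuity of the analytic operator family $\mathcal{A}_\xi[\phi]$ in the sense of Kato; modulo this verification, the rest of the argument is standard.
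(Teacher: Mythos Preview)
Your proposal is correct and follows essentially the same route as the paper: apply Gearhart--Pr\"uss, control the resolvent at large $|\Im\lambda|$ via Lemma~\ref{l:resolvent}, and handle the compact low-imaginary box by continuity of the resolvent in $(\xi,\lambda)$ over a compact parameter set. The only notable difference is that the paper invokes a slight extension of Gearhart--Pr\"uss (requiring uniform boundedness of the resolvent on the imaginary axis alone, citing \cite[Corollary~2]{GS20}), which lets it skip your Neumann-series step pushing the imaginary-axis bound to a thin strip $\{|\Re\lambda|\leq 1/(2C_0)\}$; your version is a bit longer but has the virtue of relying only on the standard statement of the theorem.
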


\begin{proof}
We use the Gearhart-Pr\"uss theorem\footnote{Technically, we are using a slight extension of the Gearhart-Pr\"uss theorem, requiring uniform boundedness of the resolvent
on the imaginary axis as opposed to a half plane.  See \cite[Corollary 2]{GS20} for a proof of this extension.} to prove the result. The absence of purely imaginary spectrum for the Bloch operators $\mathcal{A}_\xi[\phi]$ for $\xi\not=0$, implies that the resolvent estimate \eqref{e:resest} holds for any purely imaginary number $i\mu$, uniformly for all $\xi\in[-\pi/T,\pi/T)$ with $|\xi|>\xi_0$, for some fixed $\xi_0\in(0,\pi/T)$. The Gerhart-Pr\"uss theorem then implies the result in the first part of the lemma. The second part follows in the same way, noting that the operator  $\mathcal{A}_\xi[\phi](I-\Pi(\xi))$ has no purely imaginary spectrum for any $\xi\in[-\pi/T,\pi/T)$.
\end{proof}

\begin{remark}\label{r:subharmonic}
  \begin{enumerate}
  \item In the context of the LLE, the resolvent estimate \eqref{e:resest} actually holds for any complex number $\lambda=\delta+i\mu$ with $\delta>-1$ and $|\mu|>\mu_\delta$, for some $\mu_\delta>0$, which is the analogue for $\xi\not=0$ of the result obtained in \cite[Proposition 1]{SS19} for $\xi=0$. As a consequence, one can better characterize the exponential rates of decay in Proposition 2.6 by showing that
    \[
0<\eta_0<-\max\left\{\Re\sigma(\mathcal A_\xi[\phi]);\ \xi\in[-\pi/T,\pi/T),\ |\xi|>\xi_0\right\} ,
  \]
  and
    \[
0<\eta_1<-\max\left\{\Re\left(\sigma(\mathcal A_\xi[\phi])\setminus\{\lambda_c(\xi)\}\right);\ |\xi|<\xi_1\right\}.
    \]    
For our purposes, however, we do not need this more precise characterization.   
\item  Together with the Floquet-Bloch theory for subharmonic perturbations, the result in Proposition~\ref{P:exp_decay} gives the following estimate for the $C^0$-semigroup generated by $\mathcal A[\phi]$ when acting in~$L^2_{\rm per}(0,NT)$,
\[
\left\|e^{\mathcal{A}[\phi]t}(1-\mathcal{P}_{0,N})f\right\|_{L^2_{\rm per}(0,NT)}\leq C_N e^{-d_Nt},
\]
for all $t\geq0$, where $\mathcal{P}_{0,N}$ is the spectral projection of $L^2_{\rm per}(0,NT)$ onto the $NT$-periodic kernel of $\mathcal{A}[\phi]$ in Theorem~\ref{T:sub_main}. The above remark implies that $d_N\in(0,\delta_N)$, where $\delta_N$ is given by~\eqref{e:deltaN}. This is the exponential decay rate for the semigroup generated by $\mathcal{A}[\phi]$ mentioned in the introduction and required to prove the result in Theorem~\ref{T:SS}.  
\end{enumerate}
  \end{remark}

\section{Linear Asymptotic Modulational Stability to Localized Perturbations}\label{S:stab_loc}

Recall from the introduction that our goal is to obtain uniform rates of decay for subharmonic perturbations of a given $T$-periodic stationary solution of \eqref{e:LLE}.  
Our argument, which will be presented in Section \ref{S:stab_per} below,
is largely motivated by and modeled after the stability analysis to perturbations that are localized on $\RM$. 
As such, we first consider the case of localized perturbations
and present the proof of Theorem \ref{t:stab}.  We emphasize that the proof of Theorem \ref{t:stab} follows the general methodology introduced in \cite{JNRZ_Invent}
for the stability of periodic waves in general conservation or balance laws to classes of both localized and non-localized\footnote{In \cite{JNRZ_Invent} the authors
results cover the case when the perturbation slightly changes the phase at infinity of the underlying wave.  We do not consider such an extension here.} 
perturbations.  Nevertheless, we briefly outline the argument, for motivational purposes, as it applies to \eqref{e:LLE}.

Following \cite{JNRZ_Invent}, the general strategy of the proof of Theorem \ref{t:stab} is to use the Bloch transform to obtain estimates on the semigroup $e^{\mathcal{A}[\phi]t}$ 
from estimates on the Bloch semigroups $e^{\mathcal{A}_\xi[\phi]t}$ in Proposition \ref{P:exp_decay}.  
To this end, our goal is to decompose the semigroup $e^{\mathcal{A}[\phi]t}$ as
\[
e^{\mathcal{A}[\phi]t}=``{\rm Critical~Part}"~+~``{\rm Exponentially~Damped~Part}",
\]
where, owing to Lemma \ref{L:spec_prep}, the ``critical part'' should be dominated by the translation mode $\phi'$.  
Note throughout this section, we adopt the notation
$A\lesssim B$ to mean there exists a constant $C>0$ such that $A\leq CB$.

Let $v\in L^1(\RM)\cap L^2(\RM)$.  We begin by decomposing $e^{\mathcal{A}[\phi]t}$
into low-frequency and high-frequency components: with $\xi_1$ defined as in Proposition \ref{P:exp_decay}, let $\rho$ be a smooth cutoff function with $\rho(\xi)=1$
for $|\xi|<\xi_1/2$ and $\rho(\xi)=0$ for $|\xi|>\xi_1$ and use \eqref{e:blochsoln} to write
\begin{equation}\label{e:loc_shf}
\begin{aligned}
e^{\mathcal{A}[\phi]t}v(x)&=\frac{1}{2\pi}\int_{-\pi/T}^{\pi/T} \rho(\xi)e^{i\xi x}e^{\mathcal{A}_\xi[\phi] t}\check{v}(\xi,x)d\xi+
	\frac{1}{2\pi}\int_{-\pi/T}^{\pi/T} (1-\rho(\xi))e^{i\xi x}e^{\mathcal{A}_\xi[\phi] t}\check{v}(\xi,x)d\xi\\
&=:S_{lf}(t)v(x)+S_{hf}(t)v(x),
\end{aligned}
\end{equation}
where $S_{lf}$ and $S_{hf}$ denote the low- and high-frequency components of the solution operator $e^{\mathcal{A}[\phi]t}$, respectively.
According to Lemma \ref{L:spec_prep}, the spectrum of the Bloch operators $\mathcal{A}_\xi[\phi]$ have a uniform spectral gap on the support
of $(1-\rho(\xi))$ and hence, by Parseval's identity \eqref{e:parseval_loc} and Proposition \ref{P:exp_decay}, there exists a constant $\eta>0$ such that
\begin{equation}\label{e:loc_hf_est}
\begin{aligned}
\left\|S_{hf}(t)v\right\|_{L^2(\RM)}^2&=\frac{1}{2\pi T}\int_{-\pi/T}^{\pi/T}\left\|(1-\rho(\xi))e^{\mathcal{A}_\xi[\phi]t}\check{v}(\xi,x)\right\|_{L^2(0,T)}^2d\xi
\lesssim e^{-\eta t}\|v\|_{L^2(\RM)}
\end{aligned}
\end{equation}
valid for all $v\in L^2(\RM)$.  It thus remains to study the low-frequency component of the solution operator.

To this end, we further decompose $S_{lf}(t)$ into the contribution from the critical mode near $(\lambda,\xi)=(0,0)$ and the contribution
from the low-frequency spectrum bounded away from $\lambda=0$.  Accordingly, for each $|\xi|<\xi_1$ 
let $\Pi(\xi)$ be the spectral projection onto the critical mode of $\mathcal{A}_\xi[\phi]$ as defined in Proposition \ref{P:exp_decay}, and
note it is given explicitly  via
\begin{equation}\label{e:spec_proj}
\Pi(\xi)g(x) = \left<\widetilde{\Phi}_\xi,g\right>_{L^2(0,T)}\Phi_\xi(x),
\end{equation}
where $\widetilde{\Phi}_\xi$ is the element in the kernel of the adjoint $\mathcal{A}_\xi[\phi]^*-\overline{\lambda_c(\xi)}I$ satisfying 
$\left<\widetilde{\Phi}_\xi,\Phi_\xi\right>_{L^2(0,T)}=1$.  We can thus decompose $S_{lf}(t)$ as
\begin{align}
S_{lf}(t)v(x)&=\frac{1}{2\pi}\int_{-\pi/T}^{\pi/T} \rho(\xi)e^{i\xi x}e^{\mathcal{A}_\xi[\phi] t}\Pi(\xi)\check{v}(\xi,x)d\xi
	+\frac{1}{2\pi}\int_{-\pi/T}^{\pi/T} \rho(\xi)e^{i\xi x}e^{\mathcal{A}_\xi[\phi] t}(1-\Pi(\xi))\check{v}(\xi,x)d\xi \nonumber\\
&=:S_c(t)v(x)+\widetilde{S}_{lf}(t)v(x).\label{e:loc_slf}
\end{align}
Using Parseval's identity \eqref{e:parseval_loc} and Proposition \ref{P:exp_decay} again, it follows that, by possibly choosing $\eta>0$ above smaller, 
\begin{equation}\label{e:loc_lf_expest}
\left\|\widetilde{S}_{lf}(t)v\right\|_{L^2(\RM)}\lesssim e^{-\eta t}\|v\|_{L^2(\RM)}
\end{equation}
valid for all $v\in L^2(\RM)$.

For the critical component $S_c(t)$ of the solution operator, 
note that for any $v\in L^2(\RM)$ we have
\begin{align*}
e^{\mathcal{A}_\xi[\phi]t}\Pi(\xi)\check{v}(\xi,x)&=e^{\lambda_c(\xi)t}\Phi_\xi(x)\left<\widetilde{\Phi}_\xi,\check{v}(\xi,\cdot)\right>_{L^2(0,T)}\\
&=e^{\lambda_c(\xi)t}\left(\phi'+i\xi\left(\frac{\Phi_\xi(x)-\phi'(x)}{i\xi}\right)\right)\left<\widetilde{\Phi}_\xi,\check{v}(\xi,\cdot)\right>_{L^2(0,T)},
\end{align*}
where we note by Lemma \ref{L:spec_prep} that
\begin{equation}\label{e:diff_quotient_bd}
\sup_{x\in\RM}\left(\sup_{\xi\in[-\pi/T,\pi/T)}\left|\rho(\xi)\left(\frac{\Phi_\xi(x)-\phi'(x)}{i\xi}\right)\right|\right)\lesssim 1.
\end{equation}
We may thus decompose $S_c$ further as
\begin{equation}\label{phase}
\begin{aligned}
S_{c}(t)v(x)&=\phi'(x)\frac{1}{2\pi}\int_{-\pi/T}^{\pi/T} \rho(\xi)e^{i\xi x+\lambda_c(\xi)t}\left<\widetilde{\Phi}_\xi,\check{v}(\xi,\cdot)\right>_{L^2(0,T)}d\xi\\
&\qquad+\frac{1}{2\pi}\int_{-\pi/T}^{\pi/T}\rho(\xi)e^{i\xi x+\lambda_c(\xi)t}i\xi\left(\frac{\Phi_\xi(x)-\phi'(x)}{i\xi}\right)\left<\widetilde{\Phi}_\xi,\check{v}(\xi,\cdot)\right>_{L^2(0,T)}d\xi\\
&=\phi'(x)s_p(t)v(x)+\widetilde{S}_{c}(t)v(x).
\end{aligned}
\end{equation}
Now, observe that by definition (see \eqref{e:Bloch}) we have
\begin{align*}
    \LA\widetilde{\Phi}_\xi, \check{v}(\xi,\cdot)\RA_{L^2(0,T)} &= \int_0^T \overline{\widetilde{\Phi}_\xi(x)} \sum_{\ell\in\Z} e^{2\pi i\ell x/T}\hat{v}(\xi+2\pi \ell /T)\, dx\\
    &= \sum_{\ell\in\Z}\hat{v}(\xi+2\pi\ell/T)\int_0^T\overline{\widetilde{\Phi}_\xi(x)} e^{2\pi i\ell x/T}dx\\
    &=\sum_{\ell\in\Z}\hat{v}(\xi+2\pi\ell/T) \overline{\widehat{\widetilde{\Phi}}_\xi(2\pi\ell/T)}
\end{align*}
and hence, since $|\hat{v}(z)|\leq \|v\|_{L^1(\RM)}$ for all $z\in\RM$, we have
\begin{align}
    \rho(\xi) \left|\LA\widetilde{\Phi}_\xi, \check{v}(\xi,\cdot)\RA_{L^2(0,T)}\right|^2 &\leq \rho(\xi) \left(\sum_{\ell\in\Z}|\hat{v}(\xi+2\pi\ell/T)| \left|\widehat{\widetilde{\Phi}}_\xi(2\pi\ell/T)\right|\right)^2\nonumber\\
    &\leq \rho(\xi)\|v\|_{L^1(\RM)}^2\left(\sum_{\ell\in\Z} (1+|\ell|^2)^{1/2}\left|\widehat{\widetilde{\Phi}}_\xi(2\pi\ell/T)\right|(1+|\ell|^2)^{-1/2}\right)^2 \label{e:tech_ineq}\\
& \lesssim\|v\|_{L^1(\RM)}^2 \sup_{\xi\in[-\pi/T,\pi/T)}\left(\rho(\xi)\left\|\widetilde{\Phi}_\xi\right\|_{H^1_{\rm per}(0,T)}^2\right),\nonumber
\end{align}
where the final inequality follows by the Cauchy-Schwarz inequality.  Using Parseval's identity \eqref{e:parseval_loc} it follows that the phase shift component of $S_c$ satisfies
\begin{equation}\label{e:loc_sp_bd}
\begin{aligned}
\left\|s_p(t)v\right\|_{L^2(\RM)} &= \left(\frac{1}{2\pi T}\int_{-\pi/T}^{\pi/T}\left\|\rho(\xi)e^{\lambda_c(\xi)t}\left<\widetilde{\Phi}_\xi,\check{v}(\xi,\cdot)\right>_{L^2(0,T)}
		\right\|_{L^2(0,T)}^2d\xi\right)^{1/2}\\
&\lesssim 	\|e^{-d\xi^2 t}\|_{L^2_\xi(\RM)}\|v\|_{L^1(\RM)}\\
&\lesssim (1+t)^{-1/4}\|v\|_{L^1(\RM)}
\end{aligned}
\end{equation}
and, similarly,
\begin{equation}\label{e:loc_sc_bd}
\left\|\widetilde{S}_{c}(t)v\right\|_{L^2(\RM)}\lesssim \|\xi e^{-d\xi^2 t}\|_{L^2_\xi(\RM)}\|v\|_{L^1(\RM)}\lesssim (1+t)^{-3/4}\|v\|_{L^1(\RM)},
\end{equation}
where the bounds on $\|e^{-d\xi^2 t}\|_{L^2_\xi(\RM)}$ and $\|\xi e^{-d\xi^2 t}\|_{L^2_\xi(\RM)}$ follow from an elementary scaling analysis.

In summary, for each $v\in L^1(\RM)\cap L^2(\RM)$ we have the decomposition
\begin{equation}\label{e:final_decomp_loc}
e^{\mathcal{A}[\phi]t}v(x)=\phi'(x)s_p(t)v(x)+\widetilde{S}_c(t)v(x)+\widetilde{S}_{lf}(t)v(x)+S_{hf}(t)v(x),
\end{equation}
where the operators $s_p(t)$ and $\widetilde{S}_c(t)$ are defined in \eqref{phase}, and where the operators  $\widetilde{S}_{lf}(t)$ and $S_{hf}(t)$ are defined in
\eqref{e:loc_slf} and \eqref{e:loc_shf}, respectively.  Recalling the estimates \eqref{e:loc_sp_bd}-\eqref{e:loc_sc_bd},
valid for all $t\geq 0$, 
as well as the exponential decay estimates \eqref{e:loc_lf_expest} and \eqref{e:loc_hf_est} on $\widetilde{S}_{lf}(t)$ and $S_{hf}(t)$, 
respectively, the proof of Theorem \ref{t:stab} follows by setting $\gamma(x,t):=\left(s_p(t)v\right)(x)$.

\begin{remark}\label{R:diffusive_decay}
Note that to obtain $L^2\to L^2$ bounds on $s_p$ and $\widetilde{S}_c$ above, one would use in place of \eqref{e:tech_ineq} the slightly sharper
estimate
\[
\rho(\xi)\left|\left<\widetilde{\Phi}_\xi,\check{v}(\xi,\cdot)\right>_{L^2(0,T)}\right|^2\leq\rho(\xi)\left\|\widetilde{\Phi}_\xi\right\|_{L^2(0,T)}^2\|\check{v}(\xi,\cdot)\|_{L^2(0,T)}^2
\]
which, by Parseval, would then lead to the bounds
\[
\|s_p(t)v\|_{L^2(\RM)}\lesssim\|e^{-d\xi^2 t}\|_{L^\infty_\xi(\RM)}\|v\|_{L^2(\RM)}\lesssim \|v\|_{L^2(\RM)}
\]
and
\[
\|\widetilde{S}_c(t)v\|_{L^2(\RM)}\lesssim\|\xi e^{-d\xi^2 t}\|_{L^\infty_\xi(\RM)}\|v\|_{L^2(\RM)}\lesssim (1+t)^{-1/2}\|v\|_{L^2(\RM)}
\]
In particular, since $s_p(t)$ does not exhibit decay from $L^2$ to $L^2$, the final decomposition \eqref{e:final_decomp_loc} implies only a  bounded linear
stability from $L^2$ to $L^2$.    The faster polynomial rates of decay in \eqref{e:loc_sp_bd}-\eqref{e:loc_sc_bd} rely on being able to control
the initial perturbation in $L^1$ as well, and introduces diffusive rates of decay of perturbations.
\end{remark}

Finally, we end our study of the localized analysis by describing at a finer level the long-time dynamics of the modulation function $\gamma$ in Theorem \ref{t:stab}.  
Note that from the explicit form of the phase operator $s_p(t)$ defined in \eqref{phase} it is natural to expect that for a given $v\in L^2(\RM)$
the function $s_p(t)v$ should be well approximated (for at least large time) by the function
\begin{equation}\label{w}
 (w(t)v)(x):=\frac{1}{2\pi}\int_{-\pi/T}^{\pi/T} \rho(\xi)e^{i\xi x+(ia\xi-d\xi^2)t}\left<\widetilde{\Phi}_\xi,\check{v}(\xi,\cdot)\right>_{L^2(0,T)}d\xi.
\end{equation}
Precisely, following the techniques from above we have the bound
\begin{align*}
\left\|s_p(t)v-w(t)v\right\|_{L^2(\RM)}&=
\left\|\frac{1}{2\pi}\int_{-\pi/T}^{\pi/T}e^{i\xi x}\rho(\xi)\left(e^{\lambda_c(\xi) t}-e^{(ia\xi-d\xi^2) t}\right)\left<\widetilde\Phi_\xi,\check{v}(\xi,\cdot)\right>_{L^2(0,T)}d\xi\right\|_{L^2(\RM)}\\
&\lesssim\left\|e^{-d\xi^2 t}\left(e^{(\lambda_c(\xi)-(ia\xi-d\xi^2))t}-1\right)\right\|_{L^2_\xi(\RM)}\|v\|_{L^1(\RM)}\\
&\lesssim\left\|e^{-d\xi^2 t}\xi^3 t\right\|_{L^2_\xi(\RM)}\|v\|_{L^1(\RM)}\\
&\lesssim(1+t)^{-3/4}\|v\|_{L^1(\RM)}.
\end{align*}
Noting that the above $w(x,t):=(w(t)v)(x)$ defined in \eqref{w} is the unique solution of the linear diffusion IVP 
\begin{equation}\label{e:whitham_ivp}
\left\{\begin{aligned}
 w_t&=aw_x+dw_{xx}\\
w(x,0)&=s_p(0)v(x)=\frac{1}{2\pi}\int_{-\pi/T}^{\pi/T} \rho(\xi)e^{i\xi x}\left<\widetilde{\Phi}_\xi,\check{v}(\xi,\cdot)\right>_{L^2(0,T)}d\xi.
\end{aligned}\right.
\end{equation}
posed on $L^2(\RM)$, this establishes the following result concerning the behavior of the modulation function $\gamma(x,t)$ 
from Theorem \ref{t:stab}.

\begin{corollary}[Asymptotic Modulational Behavior]\label{c:behavior}
Under the hypotheses of Theorem \ref{t:stab}, for each $v\in L^1(\RM)\cap L^2(\RM)$ the modulation function $\gamma$ satisfies
the estimate
\[
\left\|\gamma(\cdot,t)-w(\cdot,t)\right\|_{L^2(\RM)}\lesssim(1+t)^{-3/4}\|v\|_{L^1(\RM)\cap L^2(\RM)}
\]
valid or all $t>0$, where here $w$ is the solution to the linear heat equation
\[
 w_t=-i\lambda_c'(0)w_{x}-\frac{1}{2}\lambda''_c(0)w_{xx},~~x\in\RM,~~t>0
\]
with initial data prescribed as in \eqref{e:whitham_ivp}.
\end{corollary}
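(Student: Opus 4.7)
The plan is to collect the estimates already carried out in the paragraph preceding the statement into a single clean assertion. Recall that the proof of Theorem \ref{t:stab} defines the modulation function by $\gamma(x,t):=(s_p(t)v)(x)$, with $s_p$ the phase operator in \eqref{phase}. Alongside $s_p$ the text introduces the comparison operator $w(t)v$ in \eqref{w}, obtained by replacing the critical Bloch eigenvalue $\lambda_c(\xi)$ by its quadratic Taylor polynomial $ia\xi-d\xi^2$ from Lemma \ref{L:spec_prep}~(ii). The target estimate is simply the $L^2$-bound on $s_p(t)v-w(t)v$ together with the identification of $w$ as the solution of a heat equation.

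First I would prove the bound
\[
\left\|s_p(t)v-w(t)v\right\|_{L^2(\RM)}\lesssim(1+t)^{-3/4}\|v\|_{L^1(\RM)}.
\]
By Parseval's identity \eqref{e:parseval_loc} it suffices to bound in $L^2_\xi$ the Bloch symbol
\[
\rho(\xi)\bigl(e^{\lambda_c(\xi)t}-e^{(ia\xi-d\xi^2)t}\bigr)\bigl\langle\widetilde{\Phi}_\xi,\check v(\xi,\cdot)\bigr\rangle_{L^2(0,T)}.
\]
On the support of $\rho$ one has, by Lemma \ref{L:spec_prep}~(ii), $\lambda_c(\xi)-(ia\xi-d\xi^2)=O(\xi^3)$ and, after shrinking $\mathrm{supp}\,\rho$ if necessary, $\Re\lambda_c(\xi)\leq-d\xi^2/2$. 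Writing the difference of exponentials as an integral along the segment joining the two exponents and applying the mean value inequality yields
\[
\bigl|e^{\lambda_c(\xi)t}-e^{(ia\xi-d\xi^2)t}\bigr|\lesssim|\xi|^3 t\,e^{-d\xi^2t/2}.
\]
Combining with $|\langle\widetilde{\Phi}_\xi,\check v(\xi,\cdot)\rangle_{L^2(0,T)}|\lesssim\|v\|_{L^1(\RM)}$, established already in \eqref{e:tech_ineq}, and a standard Gaussian scaling $\|\xi^3 t\,e^{-d\xi^2t/2}\|_{L^2_\xi(\RM)}\lesssim(1+t)^{-3/4}$ delivers the claim.

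Next I would identify $w(x,t):=(w(t)v)(x)$ as the solution to the advertised heat equation. The symbol $e^{(ia\xi-d\xi^2)t}$ is precisely that of the constant-coefficient evolution $\partial_t=a\partial_x+d\partial_x^2$ acting on $L^2(\RM)$, so $w$ solves this equation with initial datum $w(\cdot,0)=s_p(0)v$ as in \eqref{e:whitham_ivp}. From the expansion $\lambda_c(\xi)=ia\xi-d\xi^2+O(\xi^3)$ we read off $\lambda_c'(0)=ia$ and $\lambda_c''(0)=-2d$, so $a=-i\lambda_c'(0)$ and $d=-\tfrac12\lambda_c''(0)$, matching the form in the statement. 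Passing from the $L^1(\RM)$-control on the right-hand side to the slightly larger $L^1(\RM)\cap L^2(\RM)$-norm is trivial, and this completes the proof.

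The only delicate point is the uniform bound $\Re\lambda_c(\xi)\leq-d\xi^2/2$ along the interpolation path used in the mean value argument; this is handled by shrinking $\mathrm{supp}\,\rho$ so that the cubic remainder in $\lambda_c(\xi)$ is absorbed into the quadratic dissipative term, at which point every other step reduces to elementary Gaussian-type scaling. No further ingredients beyond those already used in the proof of Theorem \ref{t:stab} are needed.
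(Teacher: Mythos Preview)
Your proposal is correct and follows essentially the same approach as the paper: reduce via Parseval to an $L^2_\xi$ estimate on the Bloch symbol, exploit $\lambda_c(\xi)-(ia\xi-d\xi^2)=O(\xi^3)$ together with the $L^1$ bound \eqref{e:tech_ineq}, and finish with a Gaussian scaling. The only cosmetic difference is that the paper factors out $e^{(ia\xi-d\xi^2)t}$ and bounds $e^{(\lambda_c(\xi)-(ia\xi-d\xi^2))t}-1$ directly, whereas you use a mean value argument along the segment between the two exponents; both reductions rely on the same absorption of the cubic remainder into the quadratic dissipation, which you make explicit and the paper leaves implicit.
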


\begin{remark}
To place the final calculations above in a broader context, we note that the PDE in \eqref{e:whitham_ivp} corresponds to the Whitham modulation equation associated
to \eqref{e:LLE}.  Following the work in \cite[Appendix B]{JNRZ_Invent} it is possible to show through a formal multiple scales analysis 
that an approximate solution to the profile equation \eqref{e:profile} is given by
\[
\psi(x,t)\approx \widetilde{\psi}\left(\Psi(x,t)\right)
\]
where the wave number $k:=\Psi_x$ satisfies the Whitham equation
\[
k_t=-i\lambda_c'(0)k_x-\frac{1}{2}\lambda_c''(0)k_{xx}.
\]
This suggests that the large time modulational behavior should be governed by a solutions of the heat equation, which is precisely what is made
rigorous through Corollary \ref{c:behavior}.  For more information on dynamical predictions of Whitham modulation equations,
see \cite{DSSS,JNRZ_Invent}.
\end{remark}

\section{Linear Asymptotic Modulational Stability to Subharmonic Perturbations}\label{S:stab_per}

Motivated by the analysis in Section \ref{S:stab_loc}, we now strive to obtain decay rates on the semigroup $e^{\mathcal{A}[\phi]t}$ acting on classes of subharmonic
perturbations which are uniform in $N$.   As we will see, the analysis is based on a decomposition of the solution operator $e^{\mathcal{A}[\phi]t}$ which is largely motivated
by the analysis in Section \ref{S:stab_loc}.  One key difference, however, is that in the subharmonic case, the eigenvalue $\lambda=0$ is isolated
from the remaining $NT$-periodic eigenvalues of $\mathcal{A}[\phi]$, which leads to a slightly different decomposition of the solution operator near $(\lambda,\xi)=(0,0)$
than used in Section \ref{S:stab_loc} above.

For each $N\in \NM$ and each $p\geq 1$, we set, for notational convenience, 
\[
L^p_N:=L^p_{\rm per}(0,NT).
\]
Further, throughout the remainder of our work, we will use the previously introduced notation $A\lesssim B$ to indicate there exists a constant $C>0$ which is \emph{independent of
$N$} such that $A\leq CB$.
Now, for fixed $N\in\NM$ let  $f\in L^2_N$  and recall from \eqref{e:per_semigrp} 
that the action of the semigroup $e^{\mathcal{A}[\phi]t}$ on $f$ can be represented through the use of the Bloch transform as\footnote{This representation
formula is the analogue of \eqref{e:blochsoln} used in Section \ref{S:stab_loc}.}
\[
e^{\mathcal{A}[\phi]t}f(x)=\frac{1}{NT}\sum_{\xi\in\Omega_N} e^{i\xi x} e^{\mathcal{A}_\xi[\phi] t}\mathcal{B}_T(f)(\xi,x),
\]
where $\Omega_N\subset [-\pi/T,\pi/T)$ is defined in \eqref{e:freq} as the set of Bloch frequencies corresponding to $NT$-periodic perturbations
and $\mathcal{B}_T(f)$, defined in \eqref{e:Bloch_Tper}, denotes the $T$-periodic Bloch transform of the $NT$-periodic function $f$.

Following the general procedure in Section \ref{S:stab_loc}, we begin by decomposing the
subharmonic solution operator 
into low-frequency and high-frequency parts.  Letting $\rho$ be a smooth cut-off function as in Section \ref{S:stab_loc}, note for all $f\in L^2_N$ we have
\begin{align}
e^{\mathcal{A}[\phi]t}f(x)&=\frac{1}{NT}\sum_{\xi\in\Omega_N}\rho(\xi) e^{i\xi x} e^{\mathcal{A}_\xi[\phi] t}\mathcal{B}_T(f)(\xi,x)
	+\frac{1}{NT}\sum_{\xi\in\Omega_N}\left(1-\rho(\xi)\right) e^{i\xi x} e^{\mathcal{A}_\xi[\phi] t}\mathcal{B}_T(f)(\xi,x) \nonumber\\
&=:S_{lf,N}(t)f(x)+S_{hf,N}(t)f(x).\label{e:shf}
\end{align}
To estimate the high-frequency component, we use the discrete Parseval identity \eqref{e:parseval_per} to get
\begin{align*}
\|S_{hf,N}(t)f\|_{L_N^2}^2 &= \frac{1}{NT^2} \sum_{\xi\in\Omega_N}\|(1-\rho(\xi)) e^{A_\xi[\phi] t} \mathcal{B}_T(f)(\xi,\cdot)\|_{L^2(0,T)}^2\\
&\leq \frac{1}{NT^2} \sum_{\xi\in\Omega_N}(1-\rho(\xi))^2 \|e^{A_\xi[\phi] t}\|_{\mathcal{L}(L_{\text{per}}^2(0,T))}^2
\|\mathcal{B}_T(f)(\xi,\cdot)\|_{L^2(0,T)}^2.
\end{align*}
From Proposition \ref{P:exp_decay}, it follows from Lemma \ref{L:spec_prep} and Proposition \ref{P:exp_decay} that there exists a constant
$\eta>0$ such that
\[
\max_{\xi\in\Omega_N}(1-\rho(\xi))^2\|e^{A_\xi [\phi]t}\|_{\mathcal{L}(L_{\text{per}}^2(0,T))}
\lesssim e^{-\eta t}
\]
yielding the exponential decay estimate
\begin{equation}\label{e:shf_bd}
\|S_{hf,N}(t)f\|_{L_N^2} \lesssim e^{-\eta t} \left(\frac{1}{NT^2}\sum_{\xi\in\Omega_N}\|\mathcal{B}_T(f)(\xi,\cdot)\|_{L^2(0,T)}^2\right)^{1/2} = e^{-\eta t}\|f\|_{L_N^2},
\end{equation}
where the last equality again follows by Parseval's identity \eqref{e:parseval_per}.

Continuing on, to study the low-frequency component, let 
$\Pi(\xi)$ be the rank-one spectral projection onto the critical mode of $\mathcal{A}_\xi[\phi]$ from Proposition \ref{P:exp_decay}
and note that $S_{lf,N}(t)$ can be 
further decomposed into the contribution from the critical mode near $(\lambda,\xi)=(0,0)$ and the contribution from the low-frequency spectrum bounded
away from $\lambda=0$ via 
\begin{equation}\label{e:slf}
\begin{aligned}
    S_{lf,N}(t)f(x) &= \frac{1}{NT}\sum_{\xi\in\Omega_N}\rho(\xi)e^{i\xi x} e^{A_\xi[\phi] t}\Pi(\xi) \mathcal{B}_T(f)(\xi,x)\\
    &\qquad\qquad + \frac{1}{NT}\sum_{\xi\in\Omega_N}\rho(\xi)e^{i\xi x} e^{A_\xi[\phi] t}(1-\Pi(\xi)) \mathcal{B}_T(f)(\xi,x)\\
    &=: S_{c,N}(t)f(x) + \widetilde{S}_{lf,N}(t)f(x).
\end{aligned}
\end{equation}
Using Parseval's identity \eqref{e:parseval_per} and Proposition \ref{P:exp_decay} we know 
by possibly choosing $\eta>0$ above even smaller, that
\begin{equation}\label{e:slf_bd}
\|\widetilde{S}_{lf,N}(t)f\|_{L_N^2} \lesssim e^{-\eta t}\|f\|_{L_N^2}.
\end{equation}
For the critical component of $S_{c,N}(t)$, 
 note that by Lemma \ref{L:per_bloch} the $\xi=0$ term\footnote{Observe the $\xi=0$ term, corresponding to the projection onto the kernel of $\mathcal{A}_0[\phi]$ does
not experience any type of temporal decay.  Consequently, we factor it out of the remaining sums which, as we will see, do decay in time.} can be identified as
\[
\frac{1}{NT}\Pi(0)\mathcal{B}_T(f)(0,x)=\frac{1}{NT}\phi'(x)\left<\widetilde{\Phi}_0,\mathcal{B}_T(f)(0,\cdot)\right>_{L^2(0,T)}=\frac{1}{N}\phi'(x)\left<\widetilde{\Phi}_0,f\right>_{L^2_N}.
\]
Since \eqref{e:spec_proj} and Lemma \ref{L:spec_prep} imply the projection of $L^2_N$ onto the $NT$-periodic 
kernel of $\mathcal{A}_0[\phi]$ is given explicitly by\footnote{Recall
the left $T$-periodic eigenfunction $\widetilde{\Phi}_0$ is normalized so that $\left<\widetilde{\Phi}_0,\phi'\right>_{L^2(0,T)}=1$, and hence $\LA \widetilde{\Phi}_0, \phi'\RA_{L^2_N}=N$.}
\begin{equation}\label{e:proj}
\mathcal{P}_{0,N}f(x)=\frac{1}{N}\phi'(x)\left<\widetilde{\Phi}_0,f\right>_{L^2_N},
\end{equation}
it follows from above that $S_{c,N}$ decomposes further as
\begin{equation}\label{e:sp}
\begin{aligned}
S_{c,N}f(x)&=e^{\mathcal{A}_0[\phi]t}\mathcal{P}_{0,N}f(x)
+\frac{1}{NT}\sum_{\xi\in\Omega_N\setminus\{0\}}\rho(\xi)e^{i\xi x} e^{\lambda_c(\xi) t}\Phi_\xi(x)\LA\widetilde{\Phi}_\xi, \mathcal{B}_T(f)(\xi,\cdot)\RA_{L^2(0,T)}\\
&=e^{\mathcal{A}[\phi]t}\mathcal{P}_{0,N}f(x)
+\phi'(x)\frac{1}{NT}\sum_{\xi\in\Omega_N\setminus\{0\}}\rho(\xi)e^{i\xi x} e^{\lambda_c(\xi) t}\LA\widetilde{\Phi}_\xi, \mathcal{B}_T(f)(\xi,\cdot)\RA_{L^2(0,T)}\\
    &\qquad + \frac{1}{NT}\sum_{\xi\in\Omega_N\setminus\{0\}}\rho(\xi)e^{i\xi x} i\xi\left(\frac{\Phi_\xi(x)-\phi'(x)}{i\xi}\right)e^{\lambda_c(\xi) t}\LA\widetilde{\Phi}_\xi, \mathcal{B}_T(f)(\xi,\cdot)\RA_{L^2(0,T)}\\
&=:e^{\mathcal{A}[\phi]t}\mathcal{P}_{0,N}f(x)+\phi'(x)s_{p,N}(t)f(x) + \widetilde{S}_{c,N}(t)f(x).
\end{aligned}
\end{equation}
Now, arguing as in \eqref{e:tech_ineq} we find that
\begin{align*}
    \rho(\xi) \left|\LA\widetilde{\Phi}_\xi, \mathcal{B}_T(f)(\xi,\cdot)\RA_{L^2(0,T)}\right|^2 
	&\lesssim\|f\|_{L_N^1}^2 \sup_{\xi\in[-\pi/T,\pi/T)}\left(\rho(\xi)\left\|\widetilde{\Phi}_\xi\right\|_{H^1_{\rm per}(0,T)}^2\right),\nonumber
\end{align*}
and hence, recalling the bound \eqref{e:diff_quotient_bd} and that
\[
\left|\rho(\xi)^{1/2}e^{\lambda_c(\xi)t}\right|\lesssim e^{-d\xi^2 t}
\]
for some constant $d>0$,  it follows by Parseval's identity \eqref{e:parseval_per} that
\begin{align}
\|\widetilde{S}_{c,N}(t)f\|_{L_N^2}^2 
&=\frac{1}{NT^2}\sum_{\xi\in\Omega_N}\left\|\rho(\xi) i\xi\left(\frac{\Phi_\xi(x)-\phi'(x)}{i\xi}\right)e^{\lambda_c(\xi) t}\LA\widetilde{\Phi}_\xi, \mathcal{B}_T(f)(\xi,\cdot)\RA_{L^2(0,T)}\right\|_{L^2_x(0,T)}^2 \nonumber\\
&\lesssim \left(\frac{1}{N}\sum_{\xi\in\Omega_N}\xi^2e^{-2d\xi^2 t}\right)\|f\|_{L_N^1}^2.\label{e:sc_bd}
\end{align}
Furthermore, following similar steps as above we have the bound
\begin{equation}\label{e:sp_bd}
\left\|s_{p,N}(t)f\right\|_{L^2_N}^2\lesssim\left(\frac{1}{N}\sum_{\xi\in\Omega_N\setminus\{0\}} e^{-2d\xi^2t}\right)\|f\|_{L^1_N}^2.
\end{equation}

To complete the proof of Theorem \ref{T:sub_main}, it remains to study the finite sums
\begin{equation}\label{sums_goal}
\frac{1}{N}\sum_{\xi_j\in\Omega_N\setminus\{0\}}e^{-2d\xi_j^2 t}~~{\rm and}~~\frac{1}{N}\sum_{\xi_j\in\Omega_N}\xi_j^2e^{-2d\xi_j^2 t}.
\end{equation} 
To obtain uniform (in $N$) rates of decay, note that 
both of the sums in \eqref{sums_goal} correspond to Riemann sum approximations to  the integrals\footnote{Recalling $\Delta\xi_j=2\pi/NT$, we see that we technically need to multiply and divide by a harmless factor of $2\pi/T$.}
\begin{equation}\label{e:estint}
\int_{-\pi/T}^{\pi/T} e^{-2d\xi^2t}d\xi~~{\rm and}~~\int_{-\pi/T}^{\pi/T} \xi^2 e^{-2d\xi^2t}d\xi,
\end{equation}
which, through elementary scaling analysis (as in the previous section), decay like $(1+t)^{-1/2}$ and $(1+t)^{-3/2}$, respectively.
The next result uses this observation to obtain analogous decay bounds on the discrete sums in \eqref{sums_goal}.

\begin{proposition}\label{P:Uniform_Bounds}
There exists a constant $C>0$ such that for all  $N\in\NM$ and $t>0$ we have
\begin{equation}\label{est1}
\frac{2\pi}{NT}\sum_{\xi_j\in\Omega_N\setminus\{0\}}e^{-2d\xi_j^2 t}\leq C(1+t)^{-1/2}
\end{equation}
and
\begin{equation}\label{est2}
\frac{2\pi}{NT}\sum_{\xi_j\in\Omega_N}\xi_j^2e^{-2d\xi_j^2 t} \leq C (1+t)^{-3/2}.
\end{equation}
\end{proposition}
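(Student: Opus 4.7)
The plan is to compare the discrete sums in \eqref{est1} and \eqref{est2} to their continuous analogs in \eqref{e:estint} via elementary monotonicity arguments, then interpolate the resulting bounds across the short-time ($t\leq 1$) and long-time ($t\geq 1$) regimes. Throughout, the central point is to extract constants that depend only on $d$ and $T$, so that all bounds are uniform in $N$.

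For \eqref{est1}, the key observation is that $\xi\mapsto e^{-2d\xi^2 t}$ is monotonically decreasing on $[0,\infty)$ and symmetric about $\xi=0$. Since the grid spacing is $\xi_j-\xi_{j-1}=2\pi/(NT)$ for every $j$, for each $\xi_j\in\Omega_N$ with $j\geq 1$ we have
\[
\frac{2\pi}{NT}\,e^{-2d\xi_j^2 t}=(\xi_j-\xi_{j-1})\,e^{-2d\xi_j^2 t}\leq\int_{\xi_{j-1}}^{\xi_j}e^{-2d\xi^2 t}\,d\xi,
\]
with an analogous bound for $j\leq -1$. Summing over $\xi_j\in\Omega_N\setminus\{0\}$ and using the symmetry of the integrand yields the $N$-independent estimate
\[
\frac{2\pi}{NT}\sum_{\xi_j\in\Omega_N\setminus\{0\}}e^{-2d\xi_j^2 t}\leq 2\int_0^\infty e^{-2d\xi^2 t}\,d\xi=\sqrt{\frac{\pi}{2dt}}.
\]
For short times, the trivial bound $e^{-2d\xi_j^2 t}\leq 1$ gives $\frac{2\pi(N-1)}{NT}\leq 2\pi/T$, which is an $N$-independent constant. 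Combining these two regimes produces the claimed bound $C(1+t)^{-1/2}$.

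For \eqref{est2}, the idea is to reduce to \eqref{est1} via the pointwise inequality
\[
\xi^2 e^{-2d\xi^2 t}=\bigl(\xi^2 e^{-d\xi^2 t}\bigr)\,e^{-d\xi^2 t}\leq\frac{1}{d e t}\,e^{-d\xi^2 t},
\]
which follows by maximizing $\xi^2 e^{-d\xi^2 t}$ in $\xi$ to obtain the value $1/(det)$ at $\xi^2=1/(dt)$. Since the $\xi_j=0$ term in \eqref{est2} vanishes identically, this yields
\[
\frac{2\pi}{NT}\sum_{\xi_j\in\Omega_N}\xi_j^2 e^{-2d\xi_j^2 t}\leq\frac{1}{det}\cdot\frac{2\pi}{NT}\sum_{\xi_j\in\Omega_N\setminus\{0\}}e^{-d\xi_j^2 t}\leq\frac{C}{t\,(1+t)^{1/2}},
\]
where the final inequality is \eqref{est1} applied with $d$ replaced by $d/2$. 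This provides the desired $O(t^{-3/2})$ decay for $t\geq 1$. For $t\leq 1$ the crude bound $|\xi_j|\leq \pi/T$ and $e^{-2d\xi_j^2 t}\leq 1$ give the uniform constant $\frac{2\pi}{NT}\cdot N\cdot(\pi/T)^2=2\pi^3/T^3$, and interpolating between the two regimes produces $C(1+t)^{-3/2}$.

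I do not anticipate any serious obstacle, as the entire argument is built from standard Riemann-sum-versus-integral comparisons and a one-line maximization inequality. The essential point requiring care is that the monotonicity-based upper bounds of the sums by the integrals introduce no $N$-dependent prefactors, so the final constants depend only on $d$ and $T$. The slight asymmetry between the two estimates---the factor $\xi_j^2$ in \eqref{est2}---is absorbed by the pointwise inequality that trades a Gaussian weight $\xi^2 e^{-d\xi^2 t}$ for $1/(det)$, thereby shifting one factor of $t^{-1}$ into the amplitude and reducing the second estimate to the first.
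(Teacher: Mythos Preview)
Your argument is correct. For \eqref{est1}, your approach is essentially identical to the paper's: both use the monotonicity of $\xi\mapsto e^{-2d\xi^2 t}$ on each half-line to bound the Riemann sum by the corresponding Gaussian integral, with a trivial estimate handling the short-time regime.

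For \eqref{est2}, your route is genuinely different and somewhat more economical. The paper confronts the non-monotonicity of $\xi\mapsto\xi^2 e^{-2d\xi^2 t}$ head-on by constructing a monotone majorant $G_t$ (equal to the global maximum $(2dte)^{-1}$ on $[-R,R]$ with $R=(2dt)^{-1/2}$, and equal to $\xi^2 e^{-2d\xi^2 t}$ outside), then bounds the sum by $\int G_t$, which splits as the desired integral plus the correction $2e^{-1}(2dt)^{-3/2}$. You instead factor $\xi^2 e^{-2d\xi^2 t}=(\xi^2 e^{-d\xi^2 t})\,e^{-d\xi^2 t}$, bound the first factor pointwise by its maximum $1/(det)$, and reduce directly to \eqref{est1} with $d$ replaced by $d/2$. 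This trades half of the Gaussian decay for the extra $t^{-1}$ factor and avoids the auxiliary construction entirely. The paper's approach keeps the full Gaussian weight and stays closer to the integral one is ultimately comparing against (which matters for the sharpness result in Proposition~\ref{P:sharp}), while your approach is shorter and makes the reduction of \eqref{est2} to \eqref{est1} transparent.
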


\begin{remark}
The above bounds show that the polynomial decay rates on localized perturbations obtained in Theorem \ref{t:stab} provide uniform (in $N$) upper bounds on the 
decay rates associated to subharmonic perturbations, i.e. to perturbations with period $NT$ for some $N\in\NM$.  In the next section, we sharpen these estimates
and provide associated lower bounds on the subharmonic decay rates, showing that, in fact, the localized decay rates provide \emph{sharp} uniform bounds
on subharmonic perturbations.  The proof of the upper bounds in Proposition \ref{P:Uniform_Bounds}, however, is based on a substantially simpler monotonicity argument, which we now provide.
\end{remark}

\begin{proof}[Proof of Proposition \ref{P:Uniform_Bounds}.]
  We compare the sums in \eqref{est1} and \eqref{est2} with the integrals in \eqref{e:estint}.
For \eqref{est1}, notice that, for each $t>0$, the function $\xi\mapsto e^{-2d\xi^2t}$ is monotonically decreasing for $\xi>0$ and monotonically increasing for $\xi<0$. Together with the equality $(\xi_{j}-\xi_{j-1})=2\pi/NT$ these monotonicity property imply that
\begin{equation}\label{est1b}
    \begin{aligned}
    \frac{2\pi}{NT}\sum_{\xi_j\in\Omega_N\setminus\{0\}}e^{-2d\xi_j^2 t} &=
\underset{\xi_j<0}{\sum_{\xi_j\in\Omega_N}}e^{-2d\xi_j^2 t}(\xi_{j+1}-\xi_j) +
\underset{\xi_j>0}{\sum_{\xi_j\in\Omega_N}}e^{-2d\xi_j^2 t}(\xi_{j}-\xi_{j-1})\\ 
&\leq \underset{\xi_j<0}{\sum_{\xi_j\in\Omega_N}}\int_{\xi_j}^{\xi_{j+1}}e^{-2d\xi^2 t}d\xi +
\underset{\xi_j>0}{\sum_{\xi_j\in\Omega_N}}\int_{\xi_{j-1}}^{\xi_{j}}e^{-2d\xi^2 t}d\xi \\ 
&\leq \int_{-\pi/T}^{\pi/T} e^{-2d\xi^2t}d\xi,
    \end{aligned}
\end{equation}
which proves the inequality \eqref{est1}.

For \eqref{est2}, we have to slightly modify this argument because the function $\xi\mapsto \xi^2e^{-2d\xi^2t}$ does not have the same monotonicity properties. Indeed, it has a global minimum at $\xi=0$ and two  global maxima at $\xi=\pm R$ where 
\[
R=(2dt)^{-1/2}\quad {\rm and}\quad R^2e^{-2dR^2t}=(2dte)^{-1};
\]
see Figure \ref{F:uppdrbd}.  
\begin{figure}[t]
\begin{center}
\includegraphics[scale=0.55]{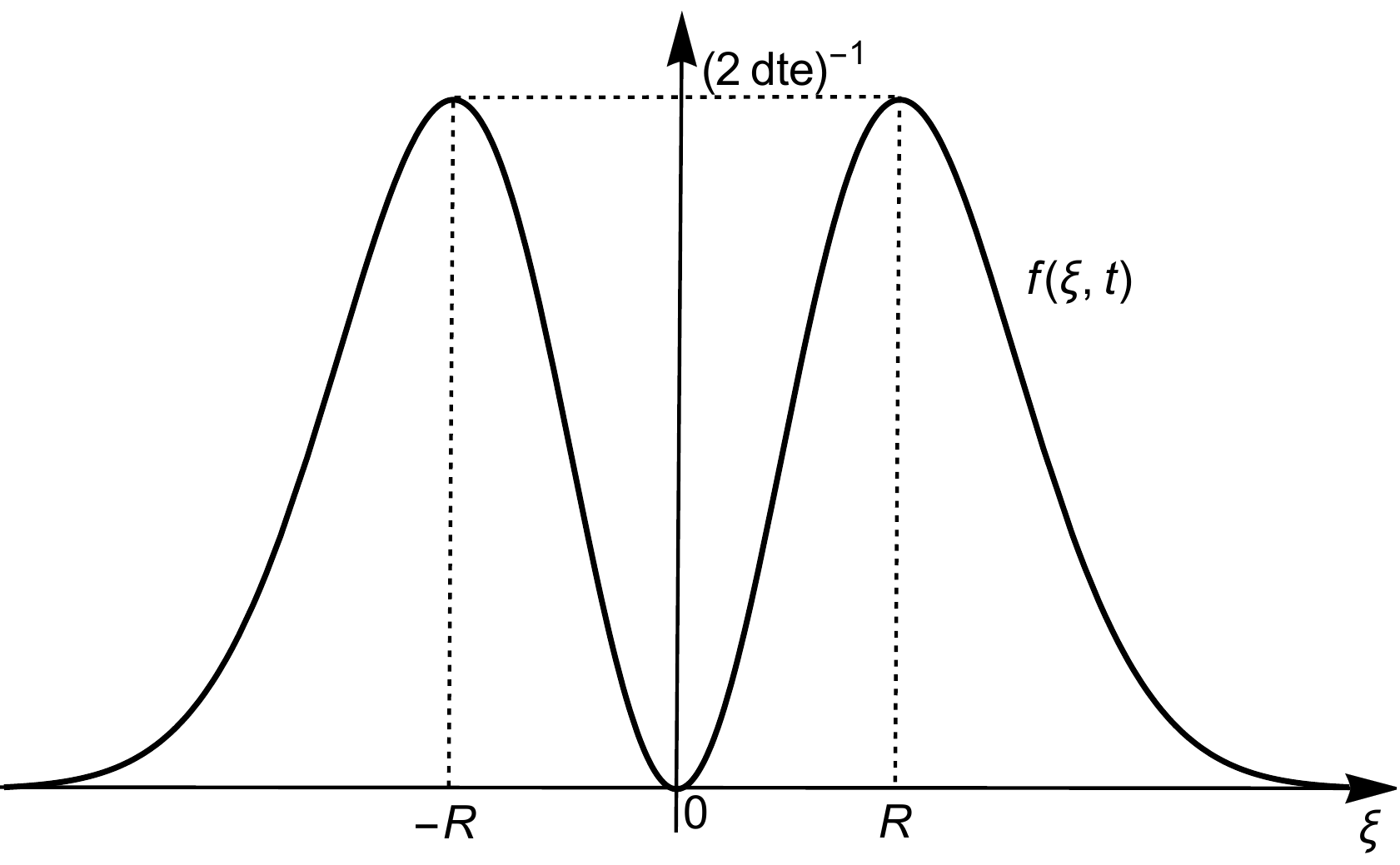}
\caption{A schematic drawing of the function $\xi\mapsto \xi^2e^{-2d\xi^2t}$ used in the proof of Proposition \ref{P:Uniform_Bounds}.  Note that the area under
the supremum between $-R\leq\xi\leq R$ is  $2R(2dte)^{-1}=2e^{-1}(2dt)^{-3/2}$.}\label{F:uppdrbd}
\end{center}
\end{figure}
Then for $0<t\leq T^2/2d\pi^2$, the values $\pm R$ do not belong to the interval $(-\pi/T,\pi/T)$ and we can easily estimate the sum in \eqref{est2},
\begin{equation}\label{est2b}
\frac{2\pi}{NT}\sum_{\xi_j\in\Omega_N}\xi_j^2e^{-2d\xi_j^2 t} \leq 
\frac{2\pi}{NT}\sum_{\xi_j\in\Omega_N} \frac{\pi^2}{T^2} \leq \frac{2\pi^3}{T^3}.
\end{equation}
For $t> T^2/2d\pi^2$, we consider the function
\[
G_t(\xi):=\begin{cases}
(2dte)^{-1}, & |\xi|\leq R\\
\xi^2e^{-2d\xi^2t}, & R<|\xi|\leq\pi/T
\end{cases}
\]
which is nonincreasing for $\xi>0$ and nondecreasing for $\xi<0$. Then by arguing as for \eqref{est1b}, we find
\begin{align*}
    \frac{2\pi}{NT}\sum_{\xi_j\in\Omega_N\setminus\{0\}}\xi_j^2e^{-2d\xi_j^2 t} &=
\underset{\xi_j<0}{\sum_{\xi_j\in\Omega_N}}\xi_j^2e^{-2d\xi_j^2 t}(\xi_{j+1}-\xi_j) +
\underset{\xi_j>0}{\sum_{\xi_j\in\Omega_N}}\xi_j^2e^{-2d\xi_j^2 t}(\xi_{j}-\xi_{j-1})\\
&\leq \underset{\xi_j<0}{\sum_{\xi_j\in\Omega_N}}\int_{\xi_j}^{\xi_{j+1}} G_t(\xi)d\xi +
\underset{\xi_j>0}{\sum_{\xi_j\in\Omega_N}}\int_{\xi_{j-1}}^{\xi_{j}}G_t(\xi)d\xi \\
&\leq \int_{-\pi/T}^{\pi/T} G_t(\xi)d\xi\ \leq\ 2e^{-1}(2dt)^{-3/2}+ \int_{-\pi/T}^{\pi/T}\xi^2 e^{-2d\xi^2t}d\xi.
\end{align*}
Together with \eqref{est2b} this proves the inequality \eqref{est2} and completes the proof of the proposition.
\end{proof}

In summary,  for each $f\in L^2_N$ we have the decomposition
\begin{align*}
e^{\mathcal{A}[\phi]t}f(x)&=e^{\mathcal{A}[\phi]t}\mathcal{P}_{0,N}f(x)+\phi'(x)s_{p,N}(t)f(x)\\
&\qquad+\widetilde{S}_{c,N}(t)f(x)+\widetilde{S}_{lf,N}(t)f(x)+S_{hf,N}(t)f(x),
\end{align*}
where $\mathcal{P}_{0,N}$ is the projection of $L^2_N$ onto the $NT$-periodic kernel of $\mathcal{A}[\phi]$ defined in \eqref{e:proj},
the operators $s_{p,N}(t)$ and $\widetilde{S}_{lf,N}(t)$ are defined as in \eqref{e:sp}, and the operators $\widetilde{S}_{c,N}(t)$ and $S_{hf,N}(t)$ are defined as in \eqref{e:slf} and \eqref{e:shf}, respectively.  Recalling the estimates \eqref{e:sc_bd}-\eqref{e:sp_bd}, Proposition \ref{P:Uniform_Bounds} implies that for all $N\in\NM$ we have
\[
\left\|s_{p,N}(t)f\right\|_{L^2_N}\lesssim (1+t)^{-1/4}\|f\|_{L^1_N},~~{\rm and}~~\left\|\widetilde{S}_{c,N}(t)f\right\|_{L^2_N}\lesssim (1+t)^{-3/4}\|f\|_{L^1_N},
\]
valid for all $t\geq 0$.  Together with the exponential decay estimates \eqref{e:slf_bd} and \eqref{e:shf_bd} on $\widetilde{S}_{lf,N}(t)$ and $S_{hf,N}(t)$, respectively,
and defining for each $f\in L^2_N$ the function\footnote{Clearly $\gamma_N$ is an $NT$-periodic function of $x$.}
\[
\gamma_N(x,t):=\frac{1}{N}\left<\widetilde{\Phi}_0,f\right>_{L^2_N}+s_{p,N}(t)f(x)
\]
and noting that
\[
\frac{1}{N}\left<\widetilde{\Phi}_0,f\right>_{L^2_N}=\frac{\left<\phi',\mathcal{P}_{0,N}f\right>_{L^2(0,T)}}{\|\phi'\|^2_{L^2(0,T)}}
\]
this completes the proof of Theorem \ref{T:sub_main}.

\begin{remark}
As mentioned near the end of the introduction, the methodology used in the above proof is very general and applies more generally to linear operators
$\mathcal{A}[\phi]$ with $T$-periodic coefficients that satisfy conditions (i)-(iii) listed in the discussion after Theorem \ref{t:stab}.
The resolvent bound in (iii) implies exponential decay of high-frequency modes, while the diffusive spectral stability condition
yields polynomial decay rates on the critical modes corresponding to spectrum near $(\lambda,\xi)=(0,0)$.  This work thus sets forth
a general methodology for establishing uniform decay rates on subharmonic perturbations of diffusively spectrally stable periodic waves in a large class
of evolution equations.
\end{remark}

Finally, we note that exponential decay rates on the semigroup $e^{\mathcal{A}[\phi]t}$ acting on $L^2_N$ may also be obtained from the above analysis.
Indeed, note that for each fixed $N\geq 2$ we have the bounds\footnote{Note that since $\Omega_1=\{0\}$, in the case $N=1$ 
we clearly have $\sum_{\xi\in\Omega_1\setminus\{0\}}e^{-2d\xi^2 t}=0$ and 
$\sum_{\xi\in\Omega_1}\xi^2 e^{-2d\xi^2t}=0$
for all $t\geq 0$.}
\[
\frac{1}{N}\sum_{\xi\in\Omega_N\setminus\{0\}}e^{-2d\xi^2 t}\leq \left(1-\frac{1}{N}\right)e^{-2d(\Delta \xi)^2t}
\]
and, similarly,
\[
\frac{1}{N}\sum_{\xi\in\Omega_N}\xi^2e^{-2d\xi^2 t}\leq \left(1-\frac{1}{N}\right)\left(\frac{\pi}{T}\right)^2 e^{-2d(\Delta\xi)^2 t},
\]
where here\footnote{Note for large $N$ that the size of the spectral gap $\max_{\xi\in\Omega_N\setminus\{0\}}\Re(\lambda_c(\xi))$
is $\mathcal{O}((\Delta\xi)^2)$.}
\[
\Delta\xi=\frac{2\pi}{NT}.
\]
In particular, from \eqref{e:sp}-\eqref{e:sp_bd} and the decompositions \eqref{e:shf}-\eqref{e:slf_bd} it immediately follows that there exists a constant $C>0$ 
such that for each $N\in\NM$ and $f\in L^2_N$ we have the exponential decay bound\footnote{In fact, we can improve \eqref{e:exp_decay} so that it is a bound from $L^2_N\to L^2_N$.}   
\begin{equation}\label{e:exp_decay}
\left\|e^{\mathcal{A}[\phi]t}\left(1-\mathcal{P}_{0,N}\right)f\right\|_{L^2_N}\leq Ce^{-\delta t}\|f\|_{L^1_N\cap L^2_N}, \qquad \delta := \min\{\eta, d(\Delta\xi)^2\},
\end{equation}
with $\eta>0$ as in \eqref{e:slf_bd},
recovering, at the linear level, the exponential stability result from \cite{SS19}: see Theorem \ref{T:SS} in the introduction.  In fact, the above observation extends the exponential
bound result used in \cite{SS19} since the constant $C>0$ above does not depend on $N$: see also Remark \ref{r:subharmonic}(ii) where the constant depends on $N$.  
Observe, however, that the exponential rate of decay exhibited above still tends to zero as $N\to\infty$.

\section{Sharpness of Localized Theory}\label{S:sharp}

While the polynomial decay rates established in Proposition \ref{P:Uniform_Bounds} provide upper bounds on the uniform decay rates of subharmonic perturbations of a given
diffusively stable, $T$-periodic standing wave solution of the LLE \eqref{e:LLE}, it is not a-priori clear that such decay rates are sharp, i.e. if it is possible that subharmonic perturbations
actually experience even faster uniform rates of decay.  After all, for each fixed $N\in\NM$ we already know such perturbations exhibit exponential decay (up to a null translational mode).
The next result shows that, in fact, the localized decay rates provided in Theorem \ref{t:stab} provide \emph{sharp uniform decay rates} for subharmonic perturbations.

\begin{proposition}\label{P:sharp}
There exists a constant $C>0$ such that for all $N\in\NM$ and $t>0$ we have
\begin{equation}\label{e:sub_sharp1}
\left|\frac{2\pi}{NT}\sum_{\xi_j\in\Omega_N\setminus\{0\}} e^{-2d\xi_j^2t} - \int_{-\pi/T}^{\pi/T} e^{-2d\xi^2t}d\xi\right|\leq \frac{C}{N}
\end{equation}
and
\begin{equation}\label{e:sub_sharp2}
\left|\frac{2\pi}{NT}\sum_{\xi_j\in\Omega_N} \xi_j^2e^{-2d\xi_j^2t} - \int_{-\pi/T}^{\pi/T}\xi^2 e^{-2d\xi^2t}d\xi\right|\leq \frac{C}{N(1+t)}.
\end{equation}
\end{proposition}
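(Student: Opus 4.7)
The plan is to treat both estimates as quantitative error bounds for a Riemann-type quadrature on $[-\pi/T,\pi/T]$ with step size $\Delta\xi = 2\pi/NT$, splitting the analysis according to the two natural regimes $t\lesssim N^2$ and $t\gtrsim N^2$, dictated by the crossover between the grid spacing and the Gaussian concentration width $(dt)^{-1/2}$. Writing $g_1(\xi)=e^{-2d\xi^2 t}$ and $g_2(\xi)=\xi^2 e^{-2d\xi^2 t}$, I would first note that removing the $\xi=0$ term in \eqref{e:sub_sharp1} costs exactly $\Delta\xi = O(1/N)$, which is harmless, while the sum in \eqref{e:sub_sharp2} already vanishes at $\xi=0$ and needs no such adjustment.

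In the resolved regime $t\lesssim N^2$, I would decompose $[-\pi/T,\pi/T]$ into cells $I_j$ of width $\Delta\xi$ centered at the grid points $\xi_j$ and apply a midpoint-rule Taylor expansion on each cell. Symmetry kills the linear term, giving a per-cell error bounded by $(\Delta\xi)^3\,\|g_i''\|_{L^\infty(I_j)}$; summing and applying a short smoothness estimate of the form $\|g_i''\|_{L^\infty(I_j)} \leq \Delta\xi^{-1}\int_{I_j}|g_i''| + \Delta\xi\,\|g_i'''\|_{L^\infty(I_j)}$ to pass from per-cell $L^\infty$ to global $L^1$ control yields a total quadrature error of order $(\Delta\xi)^2 \int|g_i''|$ plus a strictly smaller correction. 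A Gaussian rescaling $u = \sqrt{2dt}\,\xi$ then produces $\int|g_1''|\lesssim \sqrt{t}$ and $\int|g_2''|\lesssim (1+t)^{-1/2}$ (both bounds capped by a constant for $t\leq 1$), from which the error bounds $\sqrt{t}/N^2\leq 1/N$ and $1/(N^2\sqrt{1+t})\leq 1/(N(1+t))$ follow immediately whenever $t\lesssim N^2$.

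In the coarse regime $t\gtrsim N^2$, the integrands decay rapidly and I would bound the sum and the integral separately. For \eqref{e:sub_sharp1}, the integral satisfies $\int_{-\pi/T}^{\pi/T} g_1\leq \sqrt{\pi/(2dt)}\lesssim 1/N$, while the surviving sum, dominated by its nearest-neighbor terms $\xi_{\pm 1}=\pm\Delta\xi$, is at most $2\Delta\xi \sum_{k\geq 1}e^{-ck^2}$ with $c = 2d(\Delta\xi)^2 t$ bounded below uniformly in this regime, hence of order $1/N$. For \eqref{e:sub_sharp2}, the integral is $\lesssim t^{-3/2}\leq 1/(Nt)$, while the sum is again dominated by the nearest-neighbor terms, now with an extra factor $\xi_{\pm 1}^2 = (\Delta\xi)^2$, yielding $O((\Delta\xi)^3) = O(1/N^3)\leq 1/(Nt)$. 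Boundary corrections from the mismatch (nontrivial only for $N$ even) between $\bigcup_j I_j$ and $[-\pi/T,\pi/T]$ are of order $\Delta\xi\cdot g_i(\pm\pi/T)$ and are absorbed into the target bound by Gaussian decay at the endpoints.

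The main technical obstacle is the upgrade from per-cell $L^\infty$ quadrature bounds to the sharp global $L^1$ control on $g_i''$ needed to capture the $t$-decay in \eqref{e:sub_sharp2}: naively summing supremum norms only produces an upper Riemann sum for $|g_i''|$, and the resulting overshoot must be dominated via a secondary smoothness estimate on $g_i'''$. A cleaner self-contained alternative would be to apply Poisson summation to the full lattice sum $\Delta\xi\sum_{j\in\mathbb{Z}} g_i(j\Delta\xi)$, whose correction terms involve the Fourier transform of a Gaussian evaluated at frequencies $kNT$ and are therefore exponentially small in $(NT)^2/t$, comfortably dominating the required rates when $t\lesssim N^2$; the residual truncation from $\mathbb{Z}$ to $\Omega_N$ contributes only Gaussian tails outside $[-\pi/T,\pi/T]$, which are negligible in both regimes.
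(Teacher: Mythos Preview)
Your approach is sound and genuinely different from the paper's. The paper rescales $z_j=\xi_j\sqrt{t}$ so that the sum becomes a Riemann sum for $\int z^2e^{-2dz^2}\,dz$ with mesh $\Delta z=\frac{2\pi\sqrt{t}}{NT}$, and then exploits only \emph{monotonicity}: since $H(z)=z^2e^{-2dz^2}$ has a single interior maximum, left/right endpoint Riemann sums squeeze the integral up to an error of size $H(R)\,\Delta z\lesssim \sqrt{t}/N$, which undoes to give \eqref{e:sub_sharp2}. No regime split and no derivative bounds are needed; the argument is completely elementary but hinges on the specific unimodal shape of the integrand. Your quadrature-error route is more systematic and would extend to integrands without a single critical point; the price is the Taylor/Peano machinery and the regime split. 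Incidentally, your bootstrap to pass from cellwise $L^\infty$ to global $L^1$ control on $g_i''$ is unnecessary: the Peano-kernel identity for the midpoint rule on each cell gives directly
\[
\Bigl|\int_{I_j}g_i-\Delta\xi\,g_i(\xi_j)\Bigr|\le \frac{(\Delta\xi)^2}{8}\int_{I_j}|g_i''|,
\]
which sums to the sharp $(\Delta\xi)^2\|g_i''\|_{L^1}$ bound without any appeal to $g_i'''$.

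There is, however, one concrete gap in your coarse-regime estimate for \eqref{e:sub_sharp2}. You bound the sum by the nearest-neighbor contribution and write ``$O((\Delta\xi)^3)=O(1/N^3)\le 1/(Nt)$''. That last inequality is equivalent to $t\le CN^2$, which is exactly the \emph{opposite} of the regime you are in; for $t\gg N^2$ with $N$ fixed the left side is a constant while the right side tends to zero. The fix is simply not to discard the Gaussian factor: the nearest-neighbor terms contribute $(\Delta\xi)^3 e^{-2d(\Delta\xi)^2 t}$, and with $s:=2d(\Delta\xi)^2 t\gtrsim 1$ one has $(\Delta\xi)^3 e^{-s}\lesssim (\Delta\xi)^3\cdot s^{-1}\lesssim (\Delta\xi)\cdot t^{-1}\lesssim N^{-1}(1+t)^{-1}$, which is the required bound. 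With this correction your argument goes through.
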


\begin{remark}
Note that since the terms $e^{-2d\xi^2 t}$ are exponentially small outside a small ball near $\xi=0$ for large enough time, the domain of integration above could
be replaced with $(-\infty,\infty)$, giving an even more direct connection to the bounds \eqref{e:sc_bd}-\eqref{e:sp_bd} in the subharmonic
case and the bounds \eqref{e:loc_sc_bd} and \eqref{e:loc_sp_bd} in the localized case.
\end{remark}

The proof of Proposition \ref{P:sharp} is based on a careful rescaling and Riemann sum argument and is included in the appendix.  As a consequence we see convergence, say in
$L^\infty_t(0,\infty)$, as $N\to\infty$ of the discrete sum to the integral associated to the localized theory.  This proves that the \emph{uniform} (in $N$) bounds on the operators $s_{p,N}(t)$
and $\widetilde{S}_{c,N}(t)$ provided in Theorem \ref{T:sub_main} are \emph{sharp}.  Of course, as mentioned above and several times in the manuscript, while each subharmonic
perturbation exhibits exponential decay in time, it follows that the localized theory precisely describes subharmonic decay rates which are uniform in $N$.
Furthermore, as shown in the proof, the estimates \eqref{e:sub_sharp1}-\eqref{e:sub_sharp2} show explicitly that for a fixed $N$ the sums are good approximations of the associated
integral on time scales at most\footnote{This behavior on times scales at most $\mathcal{O}(N^2)$, as well as the exponential behavior for larger times,
is readily observed numerically.} $t=\mathcal{O}(N^2)$. 
For even larger times, the exponential nature of the summands dominate and the sum decays monotonically to zero
at an exponential rate.

Naturally, it is interesting to try to recover the localized theory from the subharmonic theory in the limit as $N\to\infty$.  For example, using the boundedness
of $\widetilde{\Phi}_0$ we note there exists a constant $C>0$ independent of $N\in\NM$ such
that\footnote{Note if we use the Cauchy-Schwartz inequality to control the quantity \eqref{e:convergence1} by $L^2_N$, decay in $N$ is not observed
due to the $T$-periodicity of $\widetilde{\Phi}_0$.}
\begin{equation}\label{e:convergence1}
 \left|\left<\widetilde{\Phi}_0,f\right>_{L^2_N}\right|	\leq C\|f\|_{L^1_N}.
\end{equation}
and hence the triangle inequality implies that 
\[
\left\|e^{\mathcal{A}[\phi]t}f\right\|_{L^2_N}\leq C\left(\frac{1}{\sqrt{N}}+(1+t)^{-1/4}\right)\|f\|_{L^1_N\cap L^2_N}.
\]
Consequently, if 
$\{f_N\}_{N=1}^\infty$ is a sequence of functions with $f_N\in L^2_N\cap L^1_N$ for each $N$ and if there exists a $v\in L^2(\RM)\cap L^1(\RM)$ such
that $f_N\to v$, say in $L^1_{\rm loc}(\RM)$, then formally taking $N\to\infty$ allows us to (again formally) recover the stability result to the localized perturbation $v$ established in Theorem \ref{t:stab}.
Furthermore, the estimate \eqref{e:convergence1} suggests that we should have (in some appropriate sense)
\[
\lim_{N\to\infty}\gamma_N(x,t;f_N)=\gamma(x,t;v),
\]
where $\gamma$ is the modulation function associated to $v\in L^1(\RM)\cap L^2(\RM)$; that is, we should have convergence of the associated subharmonic
and localized modulation functions.  Of course, to make 
rigorous sense of these (and other) limiting results one must deal with the fact  that a sequence of $NT$-periodic functions can only converge to a function
in $L^2(\RM)$ at best locally in space.  Establishing such a convergence results rigorously is left as an open problem.

\section{Towards Nonlinear Stability}\label{S:nonlinear}

Our main results, Theorem \ref{T:sub_main}, Theorem \ref{t:stab} and Corollary \ref{c:behavior} give insight into the asymptotic stability and long-time modulational dynamics
near a diffusively spectrally stable periodic stationary solution of the Lugiato-Lefever equation \eqref{e:LLE} to both subharmonic and localized perturbations.  
Specifically, we note all these results are established at the linear level and that an analogous theory describing 
the full nonlinear dynamics of \eqref{e:LLE} is currently an open problem.  In this section, we describe here an approach to this problem which has been useful in
the related context of dissipative conservation or balance laws \cite{JNRZ_Invent}, as well as the unresolved difficulties involved.

To begin, suppose $\phi$ is a $T$-periodic, diffusively spectrally stable stationary periodic solution of \eqref{e:LLE} and consider  
\eqref{e:LLE} equipped with the initial condition
\begin{equation}\label{IVP}
u(x,0)=\phi(x)+v_0(x)
\end{equation}
for some sufficiently smooth and small initial perturbation $v_0$.  For the sake of clarity, let us assume the initial perturbation is localized, i.e. that $v_0\in L^2(\RM)$.
So long as it exists, Theorem \ref{t:stab} suggests decomposing the solution $u(x,t)$ of \eqref{IVP} as
\[
u(x,t)=\phi(x+\gamma(x,t))+v(x,t)
\]
where here $\gamma$ is some appropriate spatial modulation to be specified\footnote{Thanks to the invariance of \eqref{e:LLE} with respect to spatial translations,
we may assume that $\gamma(x,0)=0$.} as needed in the analysis.  Substituting this ansatz into \eqref{IVP} implies the perturbation $v$ satisfies an evolution equation
of the form
\begin{equation}\label{e:pert_per}
\left(\partial_t-\mathcal{A}[\phi]\right)\left(v-\gamma\phi'\right)=\mathcal{N}(v,v_x,v_{xx},\gamma_t,\gamma_x,\gamma_{xx}),
\end{equation}
where $\mathcal{N}$ consists of nonlinear terms in its arguments and their derivatives.  Using Duhamel, the above nonlinear evolution equation is equivalent
to the following (implicit) integral equation
\begin{equation}\label{duhamel_final}
v(x,t)-\gamma(x,t)\phi'(x)=e^{\mathcal{A}[\phi]t}v(x,0)+\int_0^t e^{\mathcal{A}[\phi](t-s)}\mathcal{N}(v,v_x,v_{xx},\gamma_t,\gamma_x,\gamma_{xx})(x,s)ds.
\end{equation}
Since the linear estimates in Theorem \ref{t:stab} imply the linearized solution operator can be decomposed as
\[
e^{\mathcal{A}[\phi]t}f(x)=\phi'(x)\left(s_p(t)f\right)(x)+\widetilde{S}(t)f(x)
\]
where $s_p$ is defined as in \eqref{phase} and $\|\widetilde{S}(t)f\|_{L^2(\RM)}\lesssim (1+t)^{-3/4}\|f\|_{L^1(\RM)\cap L^2(\RM)}$, it is natural to choose the modulation
$\gamma$ above to exactly cancel all the $s_p$ contributions on the right hand side of \eqref{duhamel_final}, thus leaving a coupled set of integral
equations for $v$ and $\gamma$ that one might hope to solve and study via contraction.  

At this point in the argument, things begin to break down.  In particular, observe that control over the source term in \eqref{duhamel_final} in $L^2(\RM)$ would require
control over (at least) $v$ in $H^2(\RM)$, corresponding to a loss of derivatives in $v$ and a-priori leaving little hope of studying \eqref{duhamel_final} via
iteration.  In some cases, however, such a loss of derivatives at the linear level can be compensated by appropriately strong damping effects of the governing evolution equation: see, for example,
\cite{JNRZ_Invent}.  However, in the case of the LLE \eqref{e:LLE} the damping actually corresponds to the \emph{lowest-order derivative\footnote{In fact, a zeroth order term.}}
which elementary calculations show negates the  ``nonlinear damping" technique leveraged in \cite{JNRZ_Invent} to regain derivatives at the nonlinear level.

In summary, when attempting to upgrade our linear results (either in the localized or subharmonic setting) to a nonlinear theory one is confronted with an iteration scheme which
a-priori loses regularity.  Dealing with this loss of regularity would be an exciting direction for future research.  

\begin{remark}
It is interesting to note that in the case of subharmonic perturbations, if one fixes $N\in\NM$ and uses the exponential decay bounds on $e^{\mathcal{A}[\phi]t}$
on $L^2_N$, as in the recent work \cite{SS19}, then there is no need for the modulation function $\gamma$ to have $x$-dependence: see, for example,
Theorem \ref{T:SS}.  In this case, one can show that the nonlinear term in \eqref{e:pert_per} reduces to simply $\mathcal{N}(v,\gamma_t)$ and hence, in this case,
there would be no loss of derivatives.  This is precisely the strategy behind the proof of Theorem \ref{T:SS} presented in \cite{SS19}.  As described throughout this manuscript,
however, the exponential (nonlinear) decay rate is not uniform in $N$, and our analysis suggests that to obtain such uniform decay rates on subharmonic
perturbations one must work with space-time dependent modulation functions, which (as detailed above) leads to a loss of regularity in our iteration scheme.  
A similar situation occurs in the seminal work \cite{Kap97}, where the author considers the nonlinear transverse stability of planar traveling wave solutions to reaction diffusion systems.
In that case, the lack of spectral gap (due to the transverse modes) is compensated by the introduction of a modulation function which depends on time and the transverse spatial
directions.  However, since the background wave is constant in the transverse spatial directions, this leads to the a set of nonlinear perturbation equations where the perturbation 
(as in the work \cite{SS19} discussed above) enters without derivatives acting on it, and hence there is no loss of derivatives in the iteration scheme.
Establishing nonlinear results when one has the ability to regain these lost derivatives is currently under investigation by the authors.
\end{remark}

\appendix
\section{Proof of Proposition \ref{P:sharp}}

In this appendix, we present a proof of Proposition \ref{P:sharp}, which establishes that the localized rates of decay in Theorem \ref{t:stab} provide
\emph{sharp} uniform bounds on subharmonic perturbations of diffusively spectrally stable periodic standing waves of the LLE \eqref{e:LLE}.

\begin{proof}[Proof of Proposition \ref{P:sharp}]
As in the proof of Proposition \ref{P:Uniform_Bounds}, it is sufficient to establish the bounds \eqref{e:sub_sharp1}-\eqref{e:sub_sharp2} for $N\geq 2$ and $t\geq 1$.
Since the proof of the estimates \eqref{e:sub_sharp1} and \eqref{e:sub_sharp2} for $N\geq 2$ and $t\geq 1$ 
follow the same basic structure, we present a detailed proof of \eqref{e:sub_sharp2} and then describe the modifications needed to establish \eqref{e:sub_sharp1}.

To begin our proof of \eqref{e:sub_sharp2}, fix $N\in\NM$ with $N\geq 2$ and define the function
\begin{equation}\label{e:fN}
F_N(t):=t^{3/2}\sum_{\xi_j\in\Omega_N}\xi_j^2e^{-2d\xi_j^2 t}\Delta\xi_j,~~t\geq 1,
\end{equation}
where here for each $j\in\NM$ we set $\Delta\xi_j=\frac{2\pi}{NT}$.  
We now fix $t>0$ and use a discrete change of variables by setting, for each $\xi_j\in\Omega_N$,
\begin{equation}\label{e:rescale1}
z_j = \xi_j \sqrt{t}
\end{equation}
noting that, in particular, the set $\Omega_{N,t} := \{z_j\}$ is a discretization of $[-\pi\sqrt{t}/T,\pi\sqrt{t}/T]$ with 
\[
\Delta z_j = \Delta\xi_j\sqrt{t} = 2\pi\sqrt{t}/(NT).
\]
Further, in terms of the new variables we have 
\[
F_N(t) = \sum_{z_j\in\Omega_{N,t}} z_j^2e^{-2dz_j^2}\Delta z_j,
\]
and hence $F_N$ is a 
Riemann sum approximation for the integral
\[
\int_{-\pi\sqrt{t}/T}^{\pi\sqrt{t}/T} H(z)dz,~~{\rm where}~~H(z):=z^2 e^{-2dz^2}.
\]
Our strategy is to treat $F_N$ as either a left or right endpoint Riemann sum approximation of the integral, and our choice will be dictated by the intervals where $H$ is increasing or decreasing.  
From this, we will establish the bound\footnote{Note this clearly shows that the finite sum is a good approximation of the associated integral on time scales at most $t=\mathcal{O}(N^2)$.}
\begin{equation}\label{e:Rsum_bound}
\left|F_N(t) - \int_{-\pi\sqrt{t}/T}^{\pi\sqrt{t}/T} H(z)dz\right| \leq \frac{C\sqrt{t}}{N},
\end{equation}
which (by undoing the above rescaling), is clearly equivalent to the bound \eqref{e:sub_sharp2}.

Now, to establish \eqref{e:Rsum_bound} we first note by symmetry of the function $H$ that it suffices to study the sum defining $F_N$ over only the non-negative $z_j$, i.e. to study the function
\[
h_N(t) := \sum_{j=0}^{m} z_j^2e^{-2dz_j^2}\Delta z_j = \sum_{j=0}^m H(z_j)\Delta z_j, \quad 
m := \max\{j\mid z_j\in\Omega_{N,t}\} = \begin{cases}
\frac{N}{2}-1, & N ~\text{even}\\
\frac{N-1}{2}, & N ~\text{odd}
\end{cases}
\]
Observe that $H$ is monotonically increasing for $z\in (0,R)$, where $R=(2d)^{-1/2}$, and is monotonically decreasing for $z\in (R,\infty)$.  
In particular, for each fixed $t>0$, we have that either $z_m=z_m(t)<R$ or there exists $\ell=\ell(t)\in\{0,\ldots,m-1\}$ such that $R\in(z_\ell, z_{\ell+1}]$:
see Figure \ref{F:sum}.

\begin{figure}[t]
\begin{center}
(a)\includegraphics[scale=0.45]{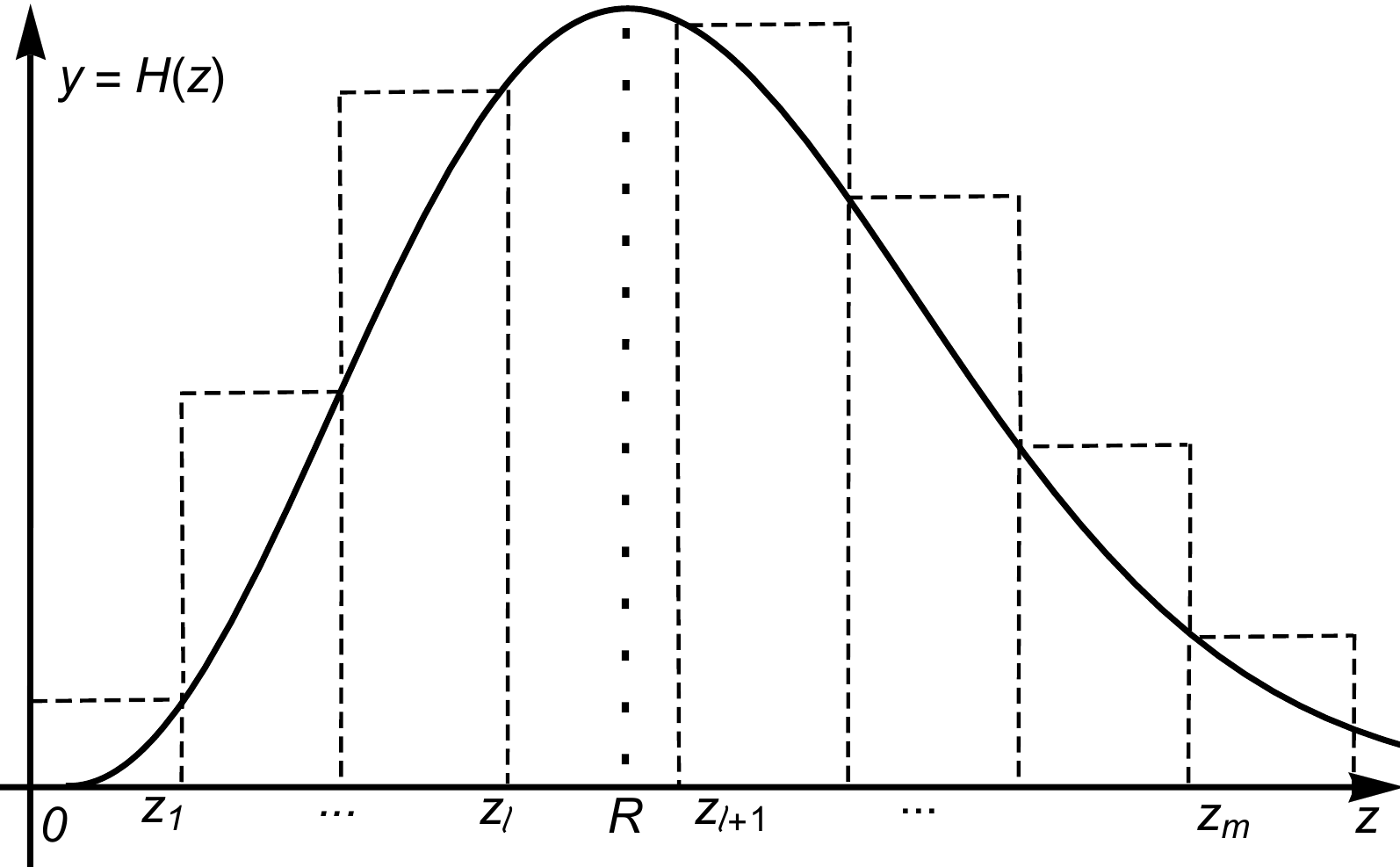}\quad (b)\includegraphics[scale=0.45]{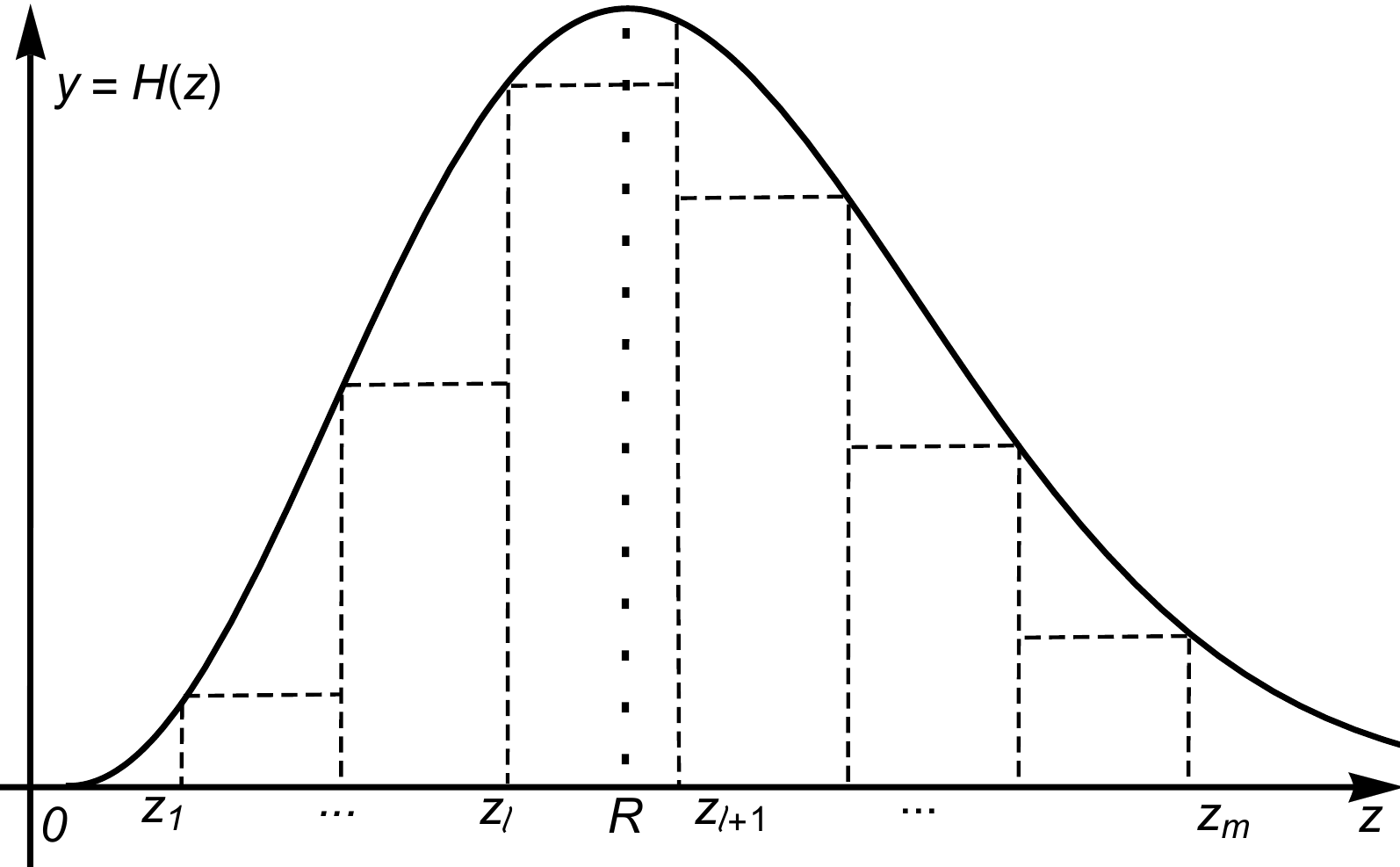}
\caption{{(a) Treating $h_N$ as an over approximation of the area under $H$.  (b) Treating $h_N$ as an under approximation. Notice that (a) misses the interval $[z_\ell, z_{\ell+1}]$.  Similarly (b) has a node, in this case $z_{\ell+1}$, that cannot produce an under approximation on the interval from $[z_\ell, z_{\ell+1}]$.  This indicates that this interval must be treated with some care.} }\label{F:sum}
\end{center}
\end{figure}

Note that if $\ell=0$ then $z_j>R$ for each $j\in\{1,2,\ldots,m\}$, implying that each such $z_j$ lies in the monotonically decreasing
tail of $H$.  This case happens when $z_1>R$, i.e. when $t>C_0 N^2$ where $C_0 = T^2/(8d \pi^2)$.
Noting, as in Figure \ref{F:sum}, that the $z_\ell$ and $z_{\ell+1}$ terms need to be handled differently, we decompose $h_N$ as
\[
h_N(t) = \sum_{j=0}^{1} H(z_j)\Delta z_j + \sum_{j=2}^m H(z_j)\Delta z_j.
\]
On the opposite of this extreme,
observe that if $\ell=m-1$ then $z_j<R$ for all $j\in\{0,1,\ldots,m-1\}$, implying that each such $z_j$  lies in the monotonically increasing
portion of $H$, while $z_m \geq R$, leading to a similar decomposition to that above.  Further, we only treat $z_m$ differently when $z_m<R$, which occurs for some (fixed) bounded interval of time.  (Although, we note that, strictly speaking, $z_m$ does not have to be treated differently when $z_m + \Delta z_m < R$.)  

In our analysis below, we will only consider the case when there are some $z_j$ in both the monotonically increasing and the monotonically decreasing
components of $H$.  That is, we restrict to the case when 
\begin{equation}\label{e:R_condition}
R\in(z_\ell, z_{\ell+1}], ~~{\rm for~some}~~ \ell\in\{1,\ldots,m-2\},
\end{equation}
noting the other more extreme cases described above will follow in a more straightforward way.
In the case when \eqref{e:R_condition} holds, we decompose $h_N$ into thee components via
\[
h_N(t) = \sum_{j=0}^{\ell-1} H(z_j)\Delta z_j + \sum_{j=\ell}^{\ell+1} H(z_j)\Delta z_j + \sum_{j=\ell+2}^m H(z_j)\Delta z_j.
\]
Note the first and last terms represent the sums over the $j$ where $z_{j+1}<R$ 
and where $z_{j-1}>R$, respectively, while the middle 
term represents the sum over the $z_j$ nearest to the absolute maximum of $H$.

Because $H$ is increasing on 
$(0,R)$ it follows that
\[
H(z_j)\Delta z_j \leq \int_{z_j}^{z_{j+1}} H(z)dz \leq 
H(z_{j+1})\Delta z_{j+1}~~{\rm for}~~j=0,1,\ldots,\ell-1,
\]
and hence, recalling that $H(z_0)=0$, we have\footnote{Technically, the above bound gives $\int_0^{z_{\ell-1}} H(z)dz \leq \sum_{j=1}^{\ell-1} H(z_j)\Delta z_j$,.  Since $H(z_0)=0$, however,
this is equivalent to the stated lower estimate.}
\[
\int_0^{z_{\ell-1}} H(z)dz \leq \sum_{j=0}^{\ell-1} H(z_j)\Delta z_j \leq \int_0^{z_\ell} H(z)dz.
\]
Similarly, since $H$ is decreasing on $(R,\infty)$ 
it follows that
\[
H(z_j)\Delta z_{j} \leq \int_{z_{j-1}}^{z_j} H(z)dz \leq H(z_{j-1})\Delta z_{j-1}~~{\rm for}~~j=\ell+2,\ldots,m
\]
which, as above, yields
\[
\int_{z_{\ell+2}}^{z_{m}+\Delta z_m} H(z)dz \leq \sum_{j=\ell+2}^m H(z_j)\Delta z_j \leq \int_{z_{\ell+1}}^{z_m} H(z)dz.
\]
Together, this gives
\begin{equation}\label{bd1}
\int_0^{z_m+\Delta z_m}H(z)dz-\int_{z_{\ell-1}}^{z_{\ell+2}}H(z)dz\leq 
h_N(t)-\sum_{j=\ell}^{\ell+1}H(z_j)\Delta z_j
\leq\int_0^{z_m}H(z)dz-\int_{z_\ell}^{z_{\ell+1}}H(z)dz.
\end{equation}

It now remains to bound the sum $\sum_{j=\ell}^{\ell+1} H(z_j)\Delta z_j$.  
To this end, recall that $H$ has exactly one critical point (a global maximum at $z=R$) on $(z_\ell, z_{\ell+1}]$.  Consequently, since the minimum of $H$
on $[z_\ell,z_{\ell+1}]$ must occur at one of the endpoints, we have
\[
\min_{j=\ell,\ell+1}H(z_j)\Delta z_j \leq \int_{z_\ell}^{z_{\ell+1}} H(z)dz.
\]
while the other endpoint is at worst the supremum of $H$, yielding
\[
\max_{j=\ell,\ell+1}H(z_j)\Delta z_j\leq H(R)\left(2\pi\sqrt{t}/(NT)\right)= 
C\sqrt{t}/N, 
\]
where $C = \pi/(dTe)$.    The previous two bounds together yield 
\[
\sum_{j=\ell}^{\ell+1}H(z_j)\Delta z_j\leq\int_{z_\ell}^{z_{\ell+1}}H(z)dz+C\sqrt{t}/N.
\]
On the other hand, recalling that $H$ is increasing on $(z_{\ell-1},z_\ell)$ and is decreasing on $(z_{\ell+1},z_{\ell+2})$ gives the lower bound
\[
H(z_\ell)\Delta z_\ell \geq \int_{z_{\ell-1}}^{z_\ell}H(z)dz, ~~{\rm and}~~ H(z_{\ell+1})\Delta z_{\ell+1} \geq \int_{z_{\ell+1}}^{z_{\ell+2}}H(z)dz.
\]
Together with the estimate \eqref{bd1}, it follows that
\begin{equation}\label{bd2}
\int_{0}^{z_m+\Delta z_m} H(z)dz - \int_{z_{\ell}}^{z_{\ell+1}} H(z)dz \leq h_N(t) \leq \int_{0}^{z_m} H(z)dz + C\sqrt{t}/N.
\end{equation}
Recalling again that $H$ attains its global maximum at $z=R\in[z_\ell,z_{\ell+1}]$, we clearly have
\[
\int_{z_{\ell}}^{z_{\ell+1}} H(z)dz \leq 
H(R)\left(2\pi\sqrt{t}/(NT)\right)=C\sqrt{t}/N.
\]
and hence, from \eqref{bd2}, we obtain
\begin{equation}\label{bd3}
\int_{0}^{z_m+\Delta z_m} H(z)dz - C\sqrt{t}/N \leq h_N(t) \leq \int_{0}^{z_m} H(z)dz + C\sqrt{t}/N.
\end{equation}
By the even symmetry of $H$ and the structure of the sets $\Omega_N$, we find that
\begin{equation}\label{bd4}
\int_{z_{-\widetilde{m}}-\Delta z_{-\widetilde{m}}}^0 H(z) dz - C\sqrt{t}/N \leq \sum_{j=-\widetilde{m}}^{-1} H(z_j) \Delta z_j \leq \int_{z_{-\widetilde{m}}}^0 H(z) dz + C\sqrt{t}/N,
\end{equation}
where
\[
\widetilde{m} := |\min\{j\mid z_j\in\Omega_{N,t}\}| = \begin{cases}
\frac{N}{2}, & N ~\text{even}\\
\frac{N-1}{2}, & N ~\text{odd}
\end{cases}.
\]
Therefore, combining \eqref{bd3} and \eqref{bd4} we obtain the bound
\[
\int_{z_{-\widetilde{m}}-\Delta z_{-\widetilde{m}}}^{z_{m}+\Delta z_m} H(z) dz - 2C\sqrt{t}/N \leq F_N(t) \leq \int_{z_{-\widetilde{m}}}^{z_m} H(z) dz + 2C\sqrt{t}/N.
\]
Noting that 
\[
\int_{z_{-\widetilde{m}}}^{z_m} H(z) dz \leq \int_{-\pi\sqrt{t}/T}^{\pi\sqrt{t}/T} H(z) dz \leq \int_{z_{-\widetilde{m}}-\Delta z_{-\widetilde{m}}}^{z_{m}+\Delta z_m} H(z) dz,
\]
we obtain the bound \eqref{e:Rsum_bound}, as desired.  By undoing the rescaling \eqref{e:rescale1}, this establishes the estimate \eqref{e:sub_sharp2} in Proposition
\ref{P:sharp}.

Finally, the proof of the  estimate \eqref{e:sub_sharp1} follows by a similar, yet simpler, analysis.
Indeed, for all $N\geq 2$ and $t\geq 1$ we define
\[
\widetilde{F}_N(t) := t^{1/2}\sum_{\xi_j\in\Omega_N\setminus\{0\}}e^{-2d\xi_j^2 t}\Delta\xi_j 
\]
and note by performing the same discrete change of variables in \eqref{e:rescale1} as before we have
\[
\widetilde{F}_N(t) = \sum_{z_j\in\Omega_{N,t}\setminus\{0\}} e^{-2d z_j^2}\Delta z_j.
\]
Since the function $\widetilde{H}(z)=e^{-2dz^2}$ is even and strictly decreasing for $z>0$, carrying out the same monotonicity argument as above we find that
\[
\int_{z_{-\widetilde{m}}-\Delta z_{-\widetilde{m}}}^{z_{m}+\Delta z_m} \widetilde{H}(z) dz - \int_{z_{-1}}^{z_1}\widetilde{H}(z)dz \leq \widetilde{F}_N(t) 
	\leq \int_{z_{-\widetilde{m}}}^{z_m} \widetilde{H}(z) dz.
\]
In particular, using the elementary bound
\[
\int_{z_{-1}}^{z_1}\widetilde{H}(z)dz\leq \widetilde{H}(0)(z_1-z_{-1})=\frac{4\pi\sqrt{t}}{NT}
\]
and noting that
\[
\int_{z_{-\widetilde{m}}}^{z_m} \widetilde{H}(z) dz \leq \int_{-\pi\sqrt{t}/T}^{\pi\sqrt{t}/T} \widetilde{H}(z) dz 
	\leq \int_{z_{-\widetilde{m}}-\Delta z_{-\widetilde{m}}}^{z_{m}+\Delta z_m} \widetilde{H}(z) dz,
\]
we obtain the estimate
\begin{equation}\label{e:Rsum_bound2}
\left|\widetilde{F}_N(t) - \int_{-\pi\sqrt{t}/T}^{\pi\sqrt{t}/T} e^{-2dz^2}dz\right| \leq \frac{4\pi\sqrt{t}}{NT}
\end{equation}
valid for all $N\geq 2$ and $t\geq 1$.  Undoing the rescaling \eqref{e:rescale1}, this completes the proof of \eqref{est1}.
\end{proof}

\begin{remark}
The reader may wonder why we used a rescaling and monotonicity argument above to establish \eqref{e:Rsum_bound} and \eqref{e:Rsum_bound2}, as opposed
to a more direct Mean Value Theorem argument.  Essentially, this is because the Mean Value Theorem gives us the cruder bound
\[
\left|F_N(t) - \int_{-\pi\sqrt{t}/T}^{\pi\sqrt{t}/T} H(z)dz\right| \leq \frac{Mt}{N}.
\]
\end{remark}

Finally we note that the uniform bounds in Proposition \ref{P:Uniform_Bounds} may be recovered from the above analysis.  For example, 
note that the function $F_N$ defined in \eqref{e:fN} satisfies the differential inequality
\[
F_N'(t) \leq t^{1/2}\left(\frac{3}{2} - \frac{8d\pi^2 t}{N^2 T^2}\right) \sum_{\xi_j\in\Omega_N}\xi_j^2e^{-2d\xi_j^2 t}\Delta\xi_j
			=t^{-1}\left(\frac{3}{2} - \frac{8d\pi^2 t}{N^2 T^2}\right) F_N(t)
\]
and hence exhibits exponential decay to zero for time scales larger than $\mathcal{O}(N^2)$.  Since \eqref{e:Rsum_bound} shows that $F_N$ is uniformly
bounded on time scales up to $\mathcal{O}(N^2)$ it follows that $F_N(t)$ is uniformly bounded (in $N$) for all $t\geq 1$, which
establishes \eqref{est2} in Proposition \ref{P:Uniform_Bounds}.  A similar differential inequality argument applied to $\widetilde{F}_N(t)$
establishes the uniform estimate \eqref{est1}.

\bibliographystyle{abbrv}
\bibliography{LLE}

\end{document}